\newtheorem{theorem}{Theorem}
\newtheorem{assumption}{Assumption}
\newtheorem{proposition}{Proposition} 
\newtheorem{corollary}{Corollary}
\newtheorem{lemma}{Lemma} 
\newtheorem{definition}{Definition}
\theoremstyle{definition}
\newtheorem{example}{Example}
\renewcommand{\d}{\mathsf{d}}
\newcommand{\Var}{\text{Var}}
\newcommand{\E}{\mathbb{E}}
\newcommand{\R}{\mathbb{R}}
\newcommand{\N}{\mathcal{N}}
\renewcommand{\P}{\mathbb{P}}
\renewcommand{\L}{\mathscr{L}}
\DeclareMathOperator*{\argmin}{argmin}
\title{\bf The Generalized Oaxaca-Blinder Estimator\thanks{Email: \texttt{gbasse@stanford.edu}. We thank Avi Feller, Winston Lin, Peng Ding and the participants of the Berkeley Causal Group for helpful comments.}}
\author{Kevin Guo \\ Stanford \and Guillaume Basse \\ Stanford}
\date{\today}
\begin{document}

\maketitle


\begin{abstract}
  After performing a randomized experiment, researchers often use ordinary-least squares (OLS) regression to adjust for baseline covariates when estimating the average treatment effect.  It is widely known that the resulting confidence interval is valid even if the linear model is misspecified.  In this paper, we generalize that conclusion to covariate adjustment with nonlinear models.  We introduce an intuitive way to use any ``simple" nonlinear model to construct a covariate-adjusted confidence interval for the average treatment effect.  The confidence interval derives its validity from randomization alone, and when nonlinear models fit the data better than linear models, it is narrower than the usual interval from OLS adjustment.
  
\medskip 
\noindent {\bf Key Words}: Agnostic covariate adjustment; Randomization inference; Neyman Model.
\end{abstract}



\section{Introduction}

\subsection{Motivation}

In this paper, we study how covariates can be used to construct more precise estimates of the sample average treatment in a completely randomized experiment.  Our investigation was inspired by Lin \citep{lin2013}, who suggests performing covariate adjustment by simply fitting an OLS model with treatment-by-covariate interactions.
\begin{align}
    \label{lins_model}
    Y_i = \mu + \tau Z_i + \beta^{\top} (\mathbf{x}_i - \bar{\mathbf{x}}) + \gamma^{\top} Z_i ( \mathbf{x}_i - \bar{\mathbf{x}})
\end{align}
In the above display, $Y_i \in \R$ is an outcome variable, $Z_i \in \{ 0, 1 \}$ denotes the treatment status of unit $i$ (1 for ``treatment", 0 for ``control"), and $\mathbf{x}_i \in \R^d$ is a vector of baseline covariates.  Remarkably, Lin showed that even if the model (\ref{lins_model}) is arbitrarily misspecified, random assignment of $Z_i$ is enough to justify standard inferences based on the regression coefficient $\hat{\tau}$.  This result is summarized (informally) in Theorem \ref{lins_result}.

\begin{theorem}
\label{lins_result}
\textup{\textbf{(Lin's result, informal)}}\\
Let $n_1$ be the size of the treatment group, and let $n_0$ be the size of the control group.  If $\min(n_0, n_1) \gg d$, then we have
\begin{align}
    \sqrt{n}( \hat{\tau} - \tau) \hspace{2mm} \dot{\sim} \hspace{2mm} \mathcal{N}(0, \sigma^2)
\end{align}
where $\tau$ is the sample average treatment effect.  Moreover, $\hat{\tau}$ is at least as efficient as Neyman's \citep{neyman} unadjusted difference-of-means estimator, and the usual confidence interval for $\tau$ based on Huber-White ``robust" standard errors is valid.
\end{theorem}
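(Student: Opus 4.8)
The plan is to argue entirely within the Neyman potential-outcomes model: the quantities $\{Y_i(1), Y_i(0), \mathbf{x}_i\}_{i=1}^{n}$ are treated as fixed, the assignment vector $(Z_1,\dots,Z_n)$ drawn uniformly over configurations with exactly $n_1$ treated and $n_0$ control units is the sole source of randomness, and $\tau = \frac{1}{n}\sum_{i=1}^{n}(Y_i(1)-Y_i(0))$. The first step is purely algebraic: because the regression in~(\ref{lins_model}) includes a full set of treatment-by-covariate interactions, its design matrix is block-diagonal across the two arms, so $\hat\tau$ equals the difference of the fitted values at $\mathbf{x}=\bar{\mathbf{x}}$ of the two \emph{separate} within-arm OLS fits; concretely, $\hat\tau = \big(\bar Y_1 - \hat\beta_1^{\top}(\bar{\mathbf{x}}_1-\bar{\mathbf{x}})\big) - \big(\bar Y_0 - \hat\beta_0^{\top}(\bar{\mathbf{x}}_0-\bar{\mathbf{x}})\big)$, where $\bar Y_z$, $\bar{\mathbf{x}}_z$, $\hat\beta_z$ are the outcome mean, covariate mean, and OLS slope computed from the arm-$z$ units. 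Thus $\hat\tau$ is a difference of arm-specific averages of the transformed responses $\tilde Y_i = Y_i - \hat\beta_{Z_i}^{\top}(\mathbf{x}_i-\bar{\mathbf{x}})$ — a difference-of-means estimator whose only nonstandard feature is that the transformation depends on $Z$ through $\hat\beta_z$.

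Next I would introduce an oracle version. Let $\beta_z^{\ast}$ be the coefficient of the least-squares projection of the fixed vector $(Y_i(z))_{i=1}^{n}$ onto $(\mathbf{x}_i)_{i=1}^{n}$ in the \emph{whole} finite population, and put $e_i(z) = Y_i(z) - \beta_z^{\ast\top}(\mathbf{x}_i-\bar{\mathbf{x}})$. The estimator $\hat\tau^{\ast}$ formed from these fixed residuals is an exact difference of means of fixed numbers (and, since $\bar e(z)=\bar Y(z)$, still unbiased for $\tau$), so the classical finite-population central limit theorem (Hájek; Li and Ding) yields $\sqrt{n}(\hat\tau^{\ast}-\tau)\,\dot\sim\,\N(0,\sigma^2)$ with $\sigma^2$ the usual Neyman variance formula evaluated on $e_i(1)$ and $e_i(0)$; this needs only a Lindeberg-type condition such as uniformly bounded fourth moments of the potential outcomes and covariates. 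The crux is then the bridge $\sqrt{n}(\hat\tau-\hat\tau^{\ast}) = o_p(1)$. Expanding the difference, it collapses to terms of the form $(\hat\beta_z-\beta_z^{\ast})^{\top}(\bar{\mathbf{x}}_z-\bar{\mathbf{x}})$; since $\bar{\mathbf{x}}_z-\bar{\mathbf{x}} = O_p(n^{-1/2})$ by the covariate mean's own CLT, it is enough to prove $\hat\beta_z-\beta_z^{\ast} = o_p(1)$, i.e.\ consistency of within-arm OLS under simple random sampling.

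I expect this consistency step — and the $o_p(n^{-1/2})$ accounting it feeds — to be the main obstacle, and it is precisely where the hypothesis $\min(n_0,n_1)\gg d$ enters. One must show that the within-arm Gram matrix $n_z^{-1}\sum_{Z_i=z}\mathbf{x}_i\mathbf{x}_i^{\top}$ concentrates around its full-population analogue (equivalently, that a simple random sample of size $n_z$ reproduces the covariate covariance structure), and likewise for the cross-moments $n_z^{-1}\sum_{Z_i=z}\mathbf{x}_i Y_i(z)$; inverting and combining then gives $\hat\beta_z\to\beta_z^{\ast}$. Concentration inequalities for sampling without replacement make this routine when $d$ is fixed, and they quantify how slowly $d$ may be allowed to grow.

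Once the CLT is in hand, efficiency and validity of the interval follow more mechanically. Viewed as a function of the adjustment pair $(\mathbf{b}_1,\mathbf{b}_0)$, the asymptotic variance of a \emph{fixed}-coefficient linearly adjusted difference-of-means is a convex quadratic minimized at the separate-regression pair $(\beta_1^{\ast},\beta_0^{\ast})$ — this is exactly the payoff of using separate rather than pooled slopes, since Freedman's pooled-slope estimator corresponds to a suboptimal $(\mathbf{b}_1,\mathbf{b}_0)$. Because $(\mathbf{b}_1,\mathbf{b}_0)=(0,0)$ recovers Neyman's unadjusted estimator, this gives $\sigma^2 \le \sigma^2_{\mathrm{Neyman}}$. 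Finally, I would show that the Huber--White ``robust'' variance estimator from~(\ref{lins_model}) converges in probability to $\sigma^2$ plus the nonnegative finite-population variance of the unit-level differences $e_i(1)-e_i(0)$ — i.e.\ it overestimates $\sigma^2$ in the familiar, unidentifiable way — so the usual Wald interval has asymptotic coverage at least $1-\alpha$. That last limit is a byproduct of the within-arm second-moment concentration already needed for the slope consistency, so no new machinery is required beyond what was developed for the bridge; see \citep{lin2013} for the original treatment.
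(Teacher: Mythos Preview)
The paper does not prove Theorem~\ref{lins_result}. It is stated explicitly as an \emph{informal} summary of Lin's result and is attributed to \cite{lin2013}; it serves purely as background and motivation for the paper's own contributions (Theorems~\ref{consistency}--\ref{confidence_intervals} and the examples in Section~\ref{examples}). There is therefore nothing in the paper to compare your proposal against.

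That said, your sketch is a faithful outline of the argument in \cite{lin2013}: the algebraic reduction to a difference of arm-specific adjusted means, the oracle device replacing $\hat\beta_z$ by the population projection slopes $\beta_z^\ast$, the finite-population CLT applied to the oracle residuals, the bridge $\sqrt{n}(\hat\tau-\hat\tau^\ast)=o_p(1)$ via $\hat\beta_z-\beta_z^\ast=o_p(1)$ and $\bar{\mathbf{x}}_z-\bar{\mathbf{x}}=O_p(n^{-1/2})$, the quadratic-in-$(\mathbf{b}_1,\mathbf{b}_0)$ variance minimization for noninferiority, and the conservative limit of the sandwich variance. This is indeed the route Lin takes, and your identification of the within-arm Gram-matrix concentration as the place where $\min(n_0,n_1)\gg d$ bites is correct. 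If you want to connect your sketch to machinery that \emph{is} developed in this paper, note that the paper's Example~\ref{ols_is_stable} and Example~\ref{ols_is_simple} together with Theorem~\ref{asymptotic_linearity} and Corollary~\ref{asymptotic_normality} recover Lin's CLT as the special case $\hat\mu_t(\mathbf{x})=\hat\beta_t^\top\mathbf{x}$ of the generalized Oaxaca--Blinder framework, though the noninferiority claim is not reproduced there (the paper explicitly remarks after Corollary~\ref{asymptotic_normality} that noninferiority is ``the price of generality'').
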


Unlike earlier work by Yang \& Tsiatis \citep{yang_tsiatis_2001}, Lin's proof does not use any probabilistic assumptions other than that the treatment assignments $Z_i$ are assigned completely at random.  It applies even if the experimental units are not randomly sampled from a larger population, which is the case in most social science experiments and clinical trials \citep{abadie_etal_2020, external_validity, randomization_forgotten}.

Theorem \ref{lins_result} has since been generalized to other experimental designs \citep{fogarty_paired_experiments, li2020rerandomization, liu_stratified} and to high-dimensional linear regression \citep{bloniarz}.  It is now widely known that covariate adjustment with linear models is never ``wrong" (at least when $n \gg d$).  However, that does not mean it is always ``right."  For example, when the outcome variable is binary, nonnegative, or highly skewed, one suspects that it may be possible to further improve precision by using nonlinear models.  There have been various clever proposals for how this might be done, but none of them have all four of the appealing properties of Lin's result:
\begin{enumerate}
    \item \textit{Statistical inference}.\\
    The method produces a confidence interval with rigorous mathematical guarantees.
    
    \item \textit{Robustness to misspecification}.\\
    The method does not require any specific assumptions about the relationship between covariates and outcomes to be valid.
    
    \item \textit{Randomization-based}.\\
    The only probabilistic assumption is that treatment assignments $Z_i$ are randomly assigned.  Validity should not be compromised if the experimental units are not randomly sampled from a larger population.
    
    \item \textit{Computational simplicity}.\\
    The estimator can be computed by practitioners without extensive programming ability, using only functions that already exist in most statistical software packages.
\end{enumerate}

Many proposals come close.  Rosenbaum \citep{rosenbaum2002} suggests forming a confidence interval by inverting a Fisher randomization test based on the residuals of an arbitrary (possibly nonlinear) model;  this satisfies 1, 3 and (arguably) 4, but the validity of the confidence interval requires a constant additive treatment effect.  The leave-one-out potential outcomes method of \cite{loop} uses any regression model to construct a randomization-unbiased estimate of $\tau$, but it is not simple to implement\footnote{An \texttt{R} package exists, but implementing the method in STATA (for example) would still be challenging.} and does not come with a confidence interval.  The literature on doubly-robust methods \citep{robins1994, kang2007, cao2009, double_machine_learning} is full of proposals satisfying 1, 2, and 4, but their theoretical justifications always assume random sampling of experimental units.

The purpose of this paper is to introduce a general-purpose method for using any sufficiently ``simple" nonlinear regression model to perform covariate adjustment in a manner that satisfies 1 -- 4. Almost every widely-used parametric model is simple enough to work, and so are some nonparametric models.  As long as the chosen nonlinear models fit the data better than linear models, then our confidence intervals are narrower than the robust standard error confidence interval from OLS adjustment.

\subsection{The potential outcomes model}

In this paper, we use the Neyman-Rubin potential outcomes model of causality \citep{neyman, rubin1974}.  We consider a finite population of $n$ experimental units, indexed by the set $\mathcal{I} = \{ 1, 2, \cdots, n \}$.  Each experimental unit consists of a triple $(y_{1i}, y_{0i}, \mathbf{x}_i)$, where $\mathbf{x}_i \in \R^d$ is a vector of covariates and $y_{1i}, y_{0i}$ are potential outcomes\footnote{This tacitly assumes that there is no interference between experimental units, e.g. the treatment assignment of unit $i$ does not affect the outcome of unit $j$ if $i \neq j$.}.  The goal is to estimate the sample average treatment effect $\tau = \tfrac{1}{n} \sum_{i = 1}^n (y_{1i} - y_{0i})$.

We adopt the framework of randomization inference, which treats all of the quantities $ \{ (y_{1i}, y_{0i}, \mathbf{x}_i) \}_{i = 1}^n$ as fixed constants \citep{bloniarz, fogarty_paired_experiments, li2020rerandomization, liu_stratified}.  The only randomness is in the treatment assignments $(Z_1, \cdots, Z_n) \sim \P_{n_1, n}$, where $\P_{n_1, n}$ is the uniform distribution on length-$n$ binary vectors $v$ with $|| v ||_1 = n_1$.  The observed outcome is $Y_i = Z_i y_{1i} + (1 - Z_i) y_{0i}$, and it is a random variable.

\section{Generalizing Oaxaca-Blinder} \label{the_estimator}

\subsection{Beyond linear adjustment}

In order to motivate our procedure, we first present another way of looking at Lin's ``interactions" estimator.  Although the estimator is defined as a coefficient in a regression model, that characterization of $\hat{\tau}$ does not illuminate why it works.  For example, it is not obvious from that characterization that covariates \textit{must} be centered -- including dummy variables coding categorical features -- in order for $\hat{\tau}$ to have model-free validity.  Without centering, the ``interactions" estimator may be badly biased even in large samples.

The reason why centering is so important is that, if all covariates are centered, then fitting the model (\ref{lins_model}) is equivalent to \textit{separately} estimating two vectors of OLS coefficients: $\hat{\theta}_1$ is estimated using only data from the treatment group, and $\hat{\theta}_0$ is estimated using only data from the control group.  The estimator $\hat{\tau}$ can be recovered from these two regressions by first imputing the unobserved potential outcomes,
\begin{align}
\label{imputation}
    \hat{y}_{ti} = 
    \left\{
    \begin{array}{ll}
    y_{ti} &Z_i = t\\
    \hat{\theta}_t^{\top} \mathbf{x}_i &Z_i \neq t
    \end{array} \right.
\end{align}
and then computing $\hat{\tau} = \tfrac{1}{n} \sum_{i = 1}^n (\hat{y}_{1i} - \hat{y}_{0i})$.  Since fitted values are not affected by centering, covariates can live on their original scale in these auxiliary regressions.  In that sense, it is more natural to think of Lin's ``interactions" estimator as an \textit{imputation} estimator.  This viewpoint is also discussed by Ding \citep{ding2018} and Chapter 7 of the textbook by Imbens \& Rubin \citep{imbens_rubin}.  In econometrics, this double-imputation procedure is known as the Oaxaca-Blinder\footnote{Although the ``interactions" estimator is algebraically equivalent to the Oaxaca-Blinder estimator, \cite{lin2013} derives its asymptotic properties in a randomization-based framework -- quite distinct from the inferential framework adopted in the econometrics literature.} method \citep{kline, oaxaca, blinder}.

This perspective suggests a natural way of using an arbitrary regression model to estimate the sample average treatment effect:  simply replace $\hat{\theta}_t^{\top} \mathbf{x}_i$ in (\ref{imputation}) with $\hat{\mu}_t(\mathbf{x}_i)$, where $\hat{\mu}_t$ is estimated (using any method) on the subset of observations with $Z_i = t$, $t \in \{ 0, 1 \}$.  We will call this procedure the \textit{generalized Oaxaca-Blinder method} -- see Algorithm \ref{generalized_oaxaca_blinder}.

\begin{algorithm}[H]
\caption{The generalized Oaxaca-Blinder method}
\label{generalized_oaxaca_blinder}
\begin{algorithmic}[1]
\State \textbf{Input}. Data $\{ (\mathbf{x}_i, Y_i, Z_i) \}_{i = 1}^n$.
\State Using data from treatment group, fit a regression model $\hat{\mu}_1$ that predicts $y_{1i}$ using $\mathbf{x}_i$.
\State Use the model $\hat{\mu}_1$ to `` fill in" the unobserved values of $y_{1i}$ using (\ref{imputation1}).
\begin{align}
\label{imputation1}
    \hat{y}_{1i} = 
    \left\{
    \begin{array}{ll}
    y_{1i} &Z_i = 1\\
    \hat{\mu}_1(\mathbf{x}_i) &Z_i = 0
    \end{array} \right.
\end{align}
\State Using data from control group, fit a regression model $\hat{\mu}_0$ that predicts $y_{0i}$ using $\mathbf{x}_i$.
\State Use the model $\hat{\mu}_0$ to ``fill in" the unobserved values of $y_{0i}$.
\begin{align}
    \hat{y}_{0i} = \left\{
    \begin{array}{ll}
    \hat{\mu}_0(\mathbf{x}_i) &Z_i = 1\\
    y_{0i} &Z_i = 0
    \end{array} \right.
\end{align}
\State \textbf{Return} $\hat{\tau} := \tfrac{1}{n} \sum_{i = 1}^n (\hat{y}_{1i} - \hat{y}_{0i})$.
\end{algorithmic}
\end{algorithm}

This procedure is so simple that it has been proposed (in one form or another) many times in different communities.  Education researchers have known about this idea since Peters (1941) \citep{peters}, and it has since appeared in applied statistics \citep{belsen, hansen_bowers}, survey sampling \citep{firth_bennett, sarndal_wright1984}, epidemiology \citep{westreich_et_al}, and economics \citep{kline, oaxaca, blinder, fairlie1999, bauer2008extension}.  Its theoretical properties are studied\footnote{These works study a slight variant where only one model is estimated (using all the data), but it contains $Z$ as a covariate.} for certain choices of $\hat{\mu}_0$, $\hat{\mu}_1$ in Rosenblum \& van der Laan \citep{rosenblum_vdl} and Bartlett \citep{bartlett_2018}, but under the assumption that experimental units are sampled randomly from a hypothetical superpopulation.  Without that assumption, theoretical results have only been established in the special case where $\hat{\mu}_1$ and $\hat{\mu}_0$ are linear or logistic models \citep{lin2013, freedman_logit, hansen_bowers}.

\subsection{Prediction unbiasedness}

Although any regression model can, in principle, be plugged into Algorithm \ref{generalized_oaxaca_blinder}, not every regression model will result in an estimator that is robust to misspecification.  The key property that is required of such a model is \textit{prediction unbiasedness}.

\begin{definition}
\textup{\textbf{(Prediction unbiasedness)}}\\
For $t \in \{ 0, 1 \}$, we say that the regression model $\hat{\mu}_t$ is \underline{prediction unbiased} if (\ref{prediction_unbiasedness}) holds with probability one.
\begin{align}
    \label{prediction_unbiasedness}
    \frac{1}{n_t} \sum_{Z_i = t} \hat{\mu}_t(\mathbf{x}_i) = \frac{1}{n_t} \sum_{Z_i = t} y_{ti}
\end{align}
\end{definition}

In words, a regression model is prediction unbiased if the average prediction on the training data always exactly matches the average outcome in the training data.  In survey sampling, this condition is called ``calibration."

Many widely-used regression models are automatically prediction unbiased\footnote{Such estimators are called ``internally bias calibrated" by Firth \& Bennett \citep{firth_bennett}.}.  For example, the first-order conditions of a canonical-link GLM imply (\ref{prediction_unbiasedness}), so linear regression, logistic regression, and Poisson regression are prediction unbiased.  Given an arbitrary prediction model $\hat{\mu}_1$, it is always possible to construct a related model $\hat{\mu}_1^{\mathsf{db}}$ which is prediction unbiased by simply subtracting off the estimated bias as in (\ref{debiasing}).  
\begin{align}
    \label{debiasing}
    \hat{\mu}_1^{\mathsf{db}}(\mathbf{x}) &= \hat{\mu}_1(\mathbf{x}) - \underbrace{\frac{1}{n_1} \sum_{Z_i = 1} (\hat{\mu}_1(\mathbf{x}) - y_{1i})}_{\text{estimated bias}}
\end{align}

Another possibility is to use the fitted values $\hat{\mu}_1(\mathbf{x}_i)$ as a covariate in an OLS regression as in (\ref{post_ols}).  The first order condition characterizing $\hat{\beta}_0$ in the least-squares problem guarantees that $\hat{\mu}_1^{\mathsf{ols2}}$ will also be prediction unbiased.
\begin{align}
\label{post_ols}
\mu_1^{\mathsf{ols2}}(\mathbf{x}) &= \hat{\beta}_0 + \hat{\beta}_1 \hat{\mu}_1(\mathbf{x}),  \quad (\hat{\beta_0}, \hat{\beta}_1) = \argmin_{(\beta_0, \beta_1)} \sum_{Z_i = 1} (y_{1i} - [\beta_0 + \beta_1 \hat{\mu}_1(\mathbf{x}_i)])^2
\end{align}

In view of these simple adjustments, prediction unbiasedness does not seriously restrict the class of permissible nonlinear models.  That being said, certain desirable features of $\hat{\mu}_1$ may not be present in $\hat{\mu}_1^{\mathsf{db}}$ or $\hat{\mu}_1^{\mathsf{ols2}}$, e.g. respecting the binary nature of the outcome variable.

When the debiasing trick (\ref{debiasing}) is used, the generalized Oaxaca-Blinder estimator is algebraically equivalent to the the augmented inverse-propensity weighted (AIPW) treatment effect estimator with known (and constant) treatment propensity (see \cite{intro_to_aipw} for an overview).  In the survey sampling literature, that estimator is known as the generalized difference estimator \citep{cassel_et_al_1976, breidt2017}.  The idea of using fitted values in an OLS regression has also appeared before in the survey sampling community, under the name ``model calibration estimator" \citep{wu_sitter_2001}.  Although those connections are mathematically fruitful, we believe that the formulation we present (``estimate $\tau$ by filling in missing values with unbiased prediction models") is much more intuitive in the context of completely randomized experiments.

\subsection{Statistical inference}

Under some additional constraints on the regression models $\hat{\mu}_1$ and $\hat{\mu}_0$ (to be discussed in Section \ref{main_results}), the confidence interval (\ref{confidence_interval}) has large-sample validity.
\begin{align}
\label{confidence_interval}
\hat{\tau} \pm z_{1 - \alpha/2} \sqrt{ \frac{\widehat{\mathsf{MSE}}(1)}{n_1} + \frac{\widehat{\mathsf{MSE}}(0)}{n_0}}
\end{align}
In the above display, $\widehat{\mathsf{MSE}}(t) = \tfrac{1}{n_t - 1} \sum_{Z_i = t} [y_{ti} - \hat{\mu}_t(\mathbf{x}_i)]^2$ is an estimate of the mean-squared error of the prediction model $\hat{\mu}_t$.  The form of the confidence interval (\ref{confidence_interval}) has some intuitive appeal:  when more accurate models are used to ``fill in" the missing values, the resulting estimator $\hat{\tau}$ is more precise.  

When the regression models are constant (i.e. $\hat{\mu}_t(\mathbf{x}) \equiv \tfrac{1}{n_t} \sum_{Z_i = t} y_{ti}$), we recover the confidence interval suggested by Neyman \citep{neyman} for the difference-of-means estimator.  The robust standard error confidence interval based on Lin's ``interactions" estimator is -- in large samples -- statistically equivalent to the interval (\ref{confidence_interval}) when both regression models are linear models.  Therefore, when nonlinear regression models fit the data better than linear models, the interval (\ref{confidence_interval}) is shorter than the ``robust" standard error interval from OLS adjustment.

\subsection{An illustration} \label{fatalities_example}

Before going into theoretical details, we briefly illustrate the computational and numerical properties of the generalized Oaxaca-Blinder estimator with a simple example.  The \texttt{Fatalities} dataset in \texttt{R} was introduced by Ruhm \citep{fatalities}, and contains the number of traffic fatalities in each continental US state between 1982 and 1988 along with a few covariates.  To study the effect of a (fictional) randomized intervention designed to reduce traffic fatalities, Lin's ``interactions" estimator is a natural baseline.  However, statistical intuition suggests that Poisson models might fit better, since the outcome variable counts the occurrences of a rare event.  

Computing the generalized Oaxaca-Blinder based on Poisson regression and its associated confidence interval using Algorithm \ref{generalized_oaxaca_blinder} and formula (\ref{confidence_interval}) is not too difficult, but there are two observations that can make it even simpler.  First, for prediction unbiased regression models (like Poisson regression), the right-hand side of the identity (\ref{predictive_projective}) is often easier to work with\footnote{The right-hand side of (\ref{predictive_projective}) is reminiscent of a ``marginal effects" calculation, and of the ``g-formula''.  We have not pursued that connection because the interpretation does not make sense in the randomization model.}.
\begin{align}
    \label{predictive_projective}
    \hat{\tau} = \frac{1}{n} \sum_{i = 1}^n (\hat{y}_{1i} - \hat{y}_{0i}) = \frac{1}{n} \sum_{i = 1}^n [\hat{\mu}_1(\mathbf{x}_i) - \hat{\mu}_0(\mathbf{x}_i)]
\end{align}

Second, the interval (\ref{confidence_interval}) can be computed by simply adding $\hat{\tau}$ to the endpoints of the confidence interval from a two-sample $t$-test\footnote{This changes the normal quantile $z_{1 - \alpha/2}$ to a $t$-quantile, which we recommend.} on the residuals of $\hat{\mu}_0$ and $\hat{\mu}_1$.  Using these two computational shortcuts, the estimator and it's confidence interval can be computed in only four lines of \texttt{R} code.
\color{blue}
\begin{verbatim}
mu1 = glm(Y ~ .-Z, family=poisson, subset(data, Z==1))
mu0 = glm(Y ~ .-Z, family=poisson, subset(data, Z==0))
tau.hat = mean(predict(mu1, data, "r") - predict(mu0, data, "r"))
tau.hat + t.test(residuals(mu1, "r"), residuals(mu0, "r"))$conf.int
\end{verbatim}
\color{black}

For comparison, computing Lin's ``interactions" estimator in \texttt{R} requires roughly the same amount of code, at least when the features include at least one factor variable.  In fact, it may be easier for some users to use this method to compute $\hat{\tau}$ even when $\hat{\mu}_1$ and $\hat{\mu}_0$ are linear models, because centering factors in STATA/SAS/Excel is nonstandard.

We ran the above code 50,000 times on the \texttt{Fatalities} dataset, rerandomizing the treatment assignments $(Z_1, \cdots, Z_{336})$ in each replication.  Each time, we also computed the robust standard error confidence interval based on Lin's ``interactions" estimator.  Figure \ref{fig:fatalities} plots the randomization distributions of these two estimators.  Two features are immediately clear:  (i) both estimators have an approximately normal randomization distribution;  (ii) the Poisson regression generalized Oaxaca-Blinder estimator is much more efficient than the ``interactions" estimator.  

This efficiency gain is also reflected in the width of the associated confidence intervals:  95\% confidence intervals based on Poisson imputation were about 45\% shorter (on average) than the robust standard error confidence intervals\footnote{We used the ``HC3" version of the robust standard errors.  The ``HC0" standard errors studied in \cite{lin2013} did not have good coverage properties in this example.} based on Lin's ``interactions" estimator.  Both confidence intervals had approximately nominal coverage.  

\begin{figure}[H]
    \centering
    \includegraphics[width=12.5cm]{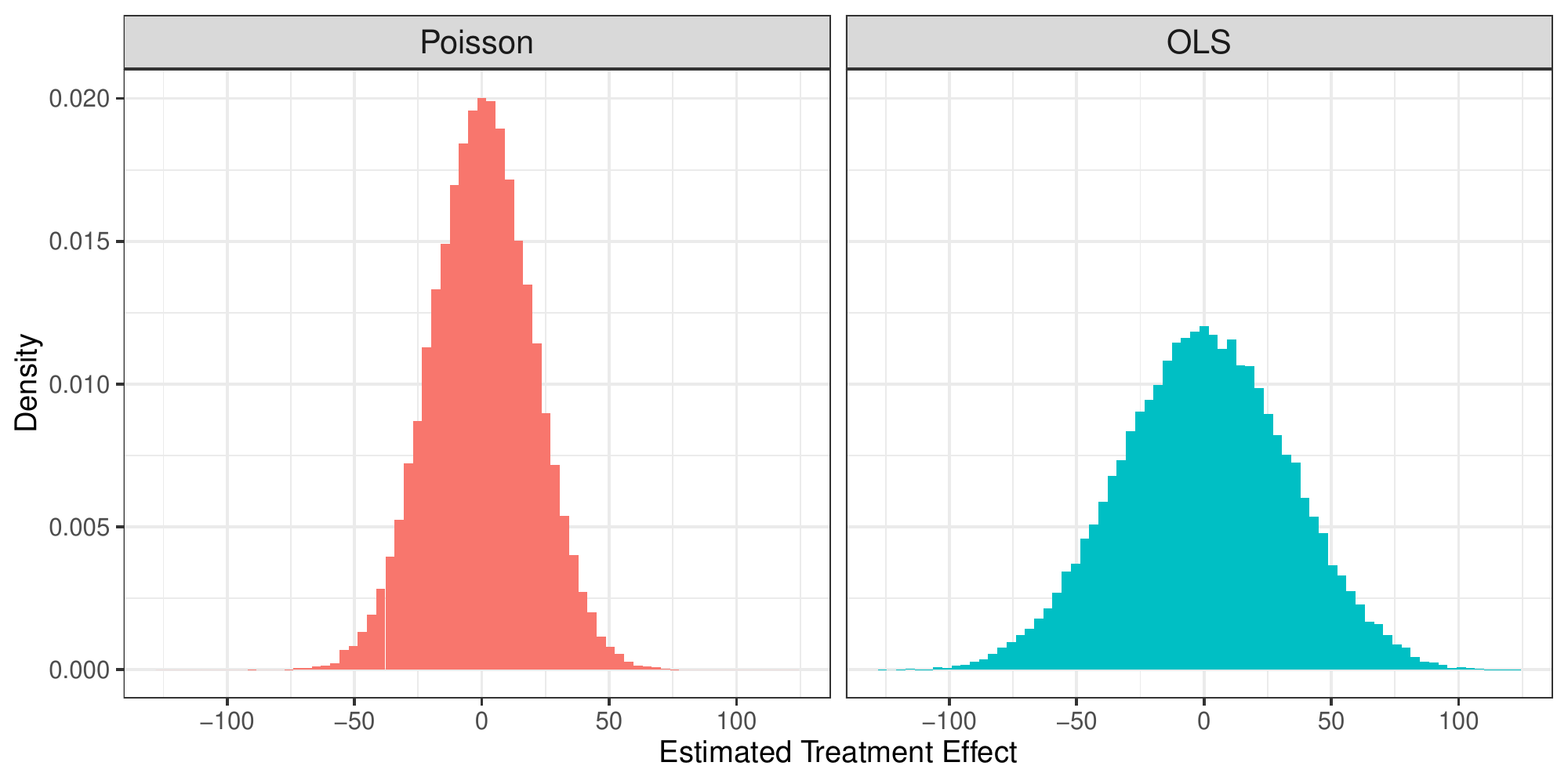}
    \caption{\textit{The randomization distribution of the Poisson regression generalized Oaxaca-Blinder estimator (left) and Lin's ``interactions" estimator (right), estimated over 50,000 randomizations.  The experimental design is completely randomized, with half of all state $\times$ year pairings receiving the ``treatment" in each randomization.  Both models control for state population, average miles per driver, and per capita income.  In the Poisson model, covariates are log-transformed.}}
    \label{fig:fatalities}
\end{figure}

\section{Theoretical results} \label{main_results}

In this section, we state our main theoretical results concerning the consistency and asymptotic normality of generalized Oaxaca-Blinder estimators.  The assumptions in this section are deliberately high-level, since the results are intended to cover a wide variety of examples.  More low-level assumptions are used to specialize these results to specific regression methods in Section \ref{examples}.

Like the results of prior work on covariate adjustment with linear models \citep{bloniarz, fogarty_paired_experiments, freedman_regression, freedman2008, li2020rerandomization, lin2013, liu_stratified}, our theoretical guarantees are asymptotic.  In Neyman's finite-population model, this means triangular-array asymptotics with respect to a sequence of finite populations $\Pi_n = \{ (y_{1i,n}, y_{0i,n}, \mathbf{x}_{i,n}) \}_{i = 1}^n$ of increasing size, each with its own completely randomized experiment $(Z_{1,n}, \cdots, Z_{n,n}) \sim \P_{n_1,n}$ and treatment effect $\tau_n = \tfrac{1}{n} \sum_{i = 1}^n (y_{1i,n} - y_{0i,n})$.  We focus on the low-dimensional regime where $d$ stays fixed as $n$ grows.  Although we do not assume that these experiments are related in any way, we will assume in what follows that the fraction of treated units $p_n = n_{1,n} / n$ satisfies $0 < p_{\min} \leq p_n \leq p_{\max} < 1$ for some bounds $p_{\min}, p_{\max}$ that do not vary with $n$.  

Some remarks on notation:  for simplicity, we will often drop the $n$-subscript on various quantities, e.g. we will write $\mathbf{x}_i$ in place of $\mathbf{x}_{i,n}$.  For two symmetric matrices $\mathbf{A}, \mathbf{B} \in \R^{d \times d}$, we will write $\mathbf{A} \succeq \mathbf{B}$ if $\mathbf{A} - \mathbf{B}$ is positive semidefinite.  For any functions $f, g : \R^d \rightarrow \R$, we define $|| f - g ||_n = (\tfrac{1}{n} \sum_{i = 1}^n [f(\mathbf{x}_{i,n}) - g(\mathbf{x}_{i,n})]^2 )^{1/2}$.

\subsection{Consistency}

Under very weak conditions, generalized Oaxaca-Blinder estimators based on prediction-unbiased regression models are consistent.  To build some intuition for why this is true, consider the Poisson regression example presented in Section \ref{fatalities_example}.  The regression models are of the form $\hat{\mu}_t(\mathbf{x}) = \exp( \hat{\theta}_t^{\top} \mathbf{x})$, where $\hat{\theta}_t$ solves (\ref{poisson}).
\begin{align}
\label{poisson}
    \hat{\theta}_t = \argmin_{\theta \in \R^d} \sum_{Z_i = t} [-y_{ti} \mathbf{x}_i^{\top} \theta + \exp( \theta^{\top} \mathbf{x}_i)]
\end{align}
Since the subset of observations with $Z_i = t$ is a random sample of all the experimental units, we would expect that $\hat{\theta}_t$ is close to the solution of the \textit{population} version of the problem (\ref{poisson}), where the sum is taken over all $i$ instead of only those with $Z_i = t$.  Let $\theta_t^*$ be the solution of the ``population" problem.  When the covariates $\mathbf{x}_i$ include an intercept, the first-order condition characterizing $\theta_t^*$ implies (\ref{poisson_matching}).
\begin{align}
\label{poisson_matching}
    \frac{1}{n} \sum_{i = 1}^n \exp(\theta_t^{* \top} \mathbf{x}_i) = \frac{1}{n} \sum_{i = 1}^n y_{ti}
\end{align}
Therefore, we could (heuristically) argue:
\begin{align*}
\frac{1}{n} \sum_{i = 1}^n \hat{y}_{ti} &= \frac{1}{n} \left( \sum_{Z_i = t} y_{ti} + \sum_{Z_i \neq t} \exp(\hat{\theta}_t^{\top} \mathbf{x}_i) \right) \\
&= \frac{1}{n} \left( \sum_{Z_i = t} \exp(\hat{\theta}_t^{\top} \mathbf{x}_i) + \sum_{Z_i \neq t} \exp( \hat{\theta}_t^{\top} \mathbf{x}_i) \right) &\text{(Prediction unbiased)}\\
&\approx \frac{1}{n} \sum_{i = 1}^n \exp(\theta_t^{* \top} \mathbf{x}_i)\\
&= \frac{1}{n} \sum_{i = 1}^n y_{ti} &\text{(By (\ref{poisson_matching}))}
\end{align*}
Since this argument works for both $t = 0$ and $t = 1$, we have $\hat{\tau} = \tfrac{1}{n} \sum_{i = 1}^n (\hat{y}_{1i} - \hat{y}_{0i}) \approx \tfrac{1}{n} \sum_{i = 1}^n (y_{1i} - y_{0i}) = \tau$. 

This simple argument is well-known in the survey sampling community (see \cite{firth_bennett, sarndal_wright1984, kang2007}), and it is the main idea behind Freedman's consistency result for logistic regression \citep{freedman2008}.  It is almost completely rigorous.  The only step that needs to be justified is the claim that $\exp(\hat{\theta}_t^{\top} \mathbf{x}_i) \approx \exp(\theta_t^{* \top} \mathbf{x})$ (at least on average).  A sufficient\footnote{This condition is not necessary for consistency, but it plays a key role in our later results on asymptotic normality.} condition to make this argument rigorous is \textit{stability}.

\begin{definition}
\textup{\textbf{(Stability)}}\\
We say that a sequence of random functions $\{ \hat{\mu}_{n} \}_{n \geq 1}$ is \underline{stable} if (\ref{stability}) holds for some deterministic sequence of functions $\{ \mu_{n}^* \}_{n \geq 1}$.
\begin{align}
    \label{stability}
    || \hat{\mu}_{n} - \mu_{n}^* ||_n := \left( \frac{1}{n} \sum_{i = 1}^n [ \hat{\mu}_{n}( \mathbf{x}_{i,n}) - \mu_{n}^* ( \mathbf{x}_{i,n})]^2 \right)^{1/2} \xrightarrow{p} 0
\end{align}
\end{definition}

The deterministic sequence in the definition of stability is not uniquely determined, but there is usually a natural choice.  For example, if $\hat{\mu}_n = \mu_{\hat{\theta}_n}$ is a parametric regression model estimated via maximum likelihood or empirical risk minimization, $\mu_n^* = \mu_{\theta_n^*}$ is the clear candidate.  For this reason, we will typically call $\mu_n^*$ ``the" population regression function, even without specifying exactly which choice of $\mu_n^*$ we are making.  

Perhaps surprisingly, the definition of stability does not assume that the deterministic sequence $\{ \mu_n^* \}_{n \geq 1}$ satisfies a property like (\ref{poisson_matching}).  It turns out that if $\{ \hat{\mu}_{1,n} \}_{n \geq 1}$ is both prediction unbiased and stable, then we may always choose the sequence $\{ \mu_{1,n}^* \}_{n \geq 1}$ to satisfy $\tfrac{1}{n} \sum_{i = 1}^n \mu_{1,n}^*(\mathbf{x}_i) = \tfrac{1}{n} \sum_{i = 1}^n y_{1i}$.  Therefore, prediction unbiasedness and stability are sufficient to carry through the heuristic argument from above -- Theorem \ref{consistency} gives a formal statement.

\begin{theorem}
\label{consistency}
\textup{\textbf{(Consistency)}}\\
Let $\{ \hat{\mu}_{1,n} \}_{n \geq 1}$ and $\{ \hat{\mu}_{0,n} \}_{n \geq 1}$ be two stable sequences of prediction-unbiased models, and let $\{ \hat{\tau}_n \}_{n \geq 1}$ be the resulting sequence of generalized Oaxaca-Blinder estimators.  For $t \in \{ 0, 1 \}$, let $\mathsf{MSE}_n(t)$ be defined by (\ref{mse}).
\begin{align}
\label{mse}
    \mathsf{MSE}_{n}(t) := \frac{1}{n} \sum_{i = 1}^n [ \mu_{t,n}^*( \mathbf{x}_{i,n}) - y_{ti,n}]^2
\end{align}
If $\mathsf{MSE}_n(1) = o(n)$ and $\mathsf{MSE}_n(0) = o(n)$, then $(\hat{\tau}_n - \tau_n) \xrightarrow{p} 0$.
\end{theorem}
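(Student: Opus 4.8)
The plan is to use prediction unbiasedness to pass to the ``projective'' form~(\ref{predictive_projective}) of the estimator, and then compare the model's average prediction over the whole population to the true outcome average, one potential-outcome arm at a time. First I would record that, for each $t$, prediction unbiasedness lets one replace $\sum_{Z_i=t}y_{ti}$ by $\sum_{Z_i=t}\hat\mu_{t,n}(\mathbf{x}_i)$ inside $\sum_i\hat y_{ti}=\sum_{Z_i=t}y_{ti}+\sum_{Z_i\neq t}\hat\mu_{t,n}(\mathbf{x}_i)$, so that $\tfrac1n\sum_i\hat y_{ti}=\tfrac1n\sum_{i=1}^n\hat\mu_{t,n}(\mathbf{x}_i)$ and hence
\[
  \hat\tau_n-\tau_n \;=\; \frac1n\sum_{i=1}^n\big[\hat\mu_{1,n}(\mathbf{x}_i)-y_{1i}\big]\;-\;\frac1n\sum_{i=1}^n\big[\hat\mu_{0,n}(\mathbf{x}_i)-y_{0i}\big].
\]
By the triangle inequality it then suffices to prove $\tfrac1n\sum_{i=1}^n[\hat\mu_{t,n}(\mathbf{x}_i)-y_{ti}]\xrightarrow{p}0$ for $t=0$ and $t=1$ separately.

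Next, fixing $t$, I would insert the population regression function $\mu_{t,n}^*$ supplied by stability and split that average into an \emph{estimation} term $\tfrac1n\sum_i[\hat\mu_{t,n}(\mathbf{x}_i)-\mu_{t,n}^*(\mathbf{x}_i)]$ and a deterministic \emph{approximation} term $\tfrac1n\sum_i[\mu_{t,n}^*(\mathbf{x}_i)-y_{ti}]$. The estimation term is handled by Cauchy--Schwarz: its absolute value is at most $\tfrac1n\sum_{i=1}^n|\hat\mu_{t,n}(\mathbf{x}_i)-\mu_{t,n}^*(\mathbf{x}_i)|\le\|\hat\mu_{t,n}-\mu_{t,n}^*\|_n$, which tends to $0$ in probability by the definition of stability. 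For the approximation term I would invoke the fact recorded just before the theorem statement --- that a prediction-unbiased, stable sequence admits a population regression function obeying the calibration identity $\tfrac1n\sum_i\mu_{t,n}^*(\mathbf{x}_i)=\tfrac1n\sum_i y_{ti}$ --- so that this term is identically zero. Summing the two arms then gives $\hat\tau_n-\tau_n\xrightarrow{p}0$.

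The step I expect to carry the real content --- and the only place the hypothesis $\mathsf{MSE}_n(t)=o(n)$ gets used --- is the justification of that calibration identity, which also requires reconciling the full-sample norm $\|\cdot\|_n$ from the definition of stability with the within-arm averages in the definition of prediction unbiasedness. The route I have in mind: take any stability witness $\nu_n$, set $\mu_{t,n}^*:=\nu_n+c_n$ with $c_n:=\tfrac1n\sum_i[y_{ti}-\nu_n(\mathbf{x}_i)]$ (so calibration holds by construction), and verify that $c_n\to0$, which is exactly what keeps $\mu_{t,n}^*$ a legitimate stability witness. Prediction unbiasedness forces the within-arm residual average $W_n:=\tfrac1{n_t}\sum_{Z_i=t}[y_{ti}-\nu_n(\mathbf{x}_i)]$ to equal $\tfrac1{n_t}\sum_{Z_i=t}[\hat\mu_{t,n}(\mathbf{x}_i)-\nu_n(\mathbf{x}_i)]$, whose square is at most $p_{\min}^{-1}\|\hat\mu_{t,n}-\nu_n\|_n^2$ (using $n_t\ge p_{\min}n$), so $W_n\xrightarrow{p}0$. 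At the same time $W_n$ is a simple-random-sample mean of the deterministic values $y_{ti}-\nu_n(\mathbf{x}_i)$, so $\E[W_n]=c_n$ and, by the without-replacement variance formula, $\Var(W_n)=\big(\tfrac1{n_t}-\tfrac1n\big)\tfrac{n}{n-1}\mathsf{MSE}_n(t)=O(\mathsf{MSE}_n(t)/n)\to0$; hence $W_n-c_n\xrightarrow{p}0$ as well, and subtracting the two facts forces the deterministic sequence $c_n=W_n-(W_n-c_n)$ to $0$. Everything else --- the projective identity, the triangle and Cauchy--Schwarz inequalities, the finite-population variance bound --- is routine. (Alternatively one can skip the projective reformulation and work directly with the imputation estimator, in which case the approximation term is a genuine random average over the out-of-arm units: calibration makes its mean zero and $\mathsf{MSE}_n(t)=o(n)$ makes its variance vanish, so Chebyshev finishes it --- the same hypothesis, just deployed in a slightly different place.)
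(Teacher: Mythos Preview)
Your proposal is correct and follows essentially the same route as the paper: pass to the projective form via prediction unbiasedness, split into an estimation term controlled by $\|\hat\mu_{t,n}-\mu_{t,n}^*\|_n$ and an approximation term killed by the population calibration identity. Your sketch of why a calibrated stability witness exists (show $c_n\to0$ by combining prediction unbiasedness with the finite-population variance bound that uses $\mathsf{MSE}_n(t)=o(n)$) is exactly the content of the paper's Lemma~\ref{population_prediction_unbiasedness}, argued in the same way.
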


Note that Theorem \ref{consistency} does not depend on the specific choice of the sequences $\{ \mu_{1,n}^* \}_{n \geq 1}$ and $\{ \mu_{0,n}^* \}_{n \geq 1}$.  As long as any nonrandom sequence $\{ \mu_{t,n}^* \}_{n \geq 1}$ satisfying $|| \hat{\mu}_{t,n} - \mu_{t,n}^* ||_n \rightarrow_p 0$ has $\mathsf{MSE}_n(t) = o(n)$, then all such sequences will have that property.  In most cases, the mean-squared error of even a grossly misspecified model is not diverging at all, so we would have $\mathsf{MSE}_n(1) = \mathcal{O}(1)$, $\mathsf{MSE}_n(0) = \mathcal{O}(1)$.  We have used the weaker assumption in Theorem \ref{consistency} only for the sake of generality.

Proving that a sequence of random functions $\{ \hat{\mu}_n \}_{n \geq 1}$ is stable is typically an exercise in translating some standard arguments from the theory of M-estimation into the language of finite populations.  This may or may not be simple, depending on the regression function.  In Section \ref{examples}, we give a few examples of widely-used regression methods where this can be done, including logistic regression, Poisson regression, and OLS regression with a transformed outcome variable.  In Section \ref{parametric_recipe}, we outline a general strategy that works for a large class of smooth parametric models.  The argument is especially simple in the case of linear regression, so we present it as an example.

\begin{example}
\label{ols_is_stable}
\textup{\textbf{(OLS is stable)}}\\
Let $\hat{\mu}_{1,n}(\mathbf{x}) = \hat{\beta}_{1,n}^{\top} \mathbf{x}$, where $\hat{\beta}_1 = \argmin \sum_{Z_i = 1} (y_{1i,n} - \mathbf{x}_{i,n}^{\top} \beta)^2$.  Assume that $\tfrac{1}{n} \sum_{i = 1}^n || \mathbf{x}_i ||^4 = o(n)$, $\tfrac{1}{n} \sum_{i = 1}^n y_{1i}^4 = o(n)$, and $\tfrac{1}{n} \sum_{i = 1}^n (y_{1i}, \mathbf{x}_i)(y_{1i}, \mathbf{x}_i)^{\top}$ converges to an invertible matrix.  Then $\{ \hat{\mu}_{1,n} \}_{n \geq 1}$ is a stable sequence.
\end{example}

\begin{proof}
We can write $\hat{\beta}_{1,n} = ( \tfrac{1}{n_1} \sum_{Z_i = 1} \mathbf{x}_i \mathbf{x}_i^{\top})^{-1} (\tfrac{1}{n_1} \sum_{Z_i = 1} \mathbf{x}_i^{\top} y_{1i})$.  Let $\mathbf{\Sigma}_{xx}$ be the limit of $\tfrac{1}{n} \sum_{i = 1}^n \mathbf{x}_i \mathbf{x}_i^{\top}$ and $\mathbf{\Sigma}_{xy}$ be the limit of $\tfrac{1}{n} \sum_{i = 1}^n \mathbf{x}_i y_{1i}$.  By the completely randomized law of large numbers\footnote{See Lemma \ref{wlln} in the appendix.} and the continuous mapping theorem, $\hat{\beta}_{1,n} \xrightarrow{p} \beta_1^* = \mathbf{\Sigma}_{xx}^{-1} \mathbf{\Sigma}_{xy}$.  If we set $\mu_{1,n}^*(\mathbf{x}) \equiv \beta_1^{* \top} \mathbf{x}$, we may write:
\begin{align*}
|| \hat{\mu}_{1,n} - \mu_{1,n}^* ||_n^2 &= \frac{1}{n} \sum_{i = 1}^n [(\hat{\beta}_{1,n} - \beta_1^*)^{\top} \mathbf{x}_i]^2 \leq \text{Tr}(\mathbf{\Sigma}_n) || \hat{\beta}_{1,n} - \beta_1^* ||^2 \xrightarrow{p} 0
\end{align*}
Thus, $\{ \hat{\mu}_{1,n} \}_{n \geq 1}$ is stable.
\end{proof}

\subsection{Asymptotic normality}

In order prove the asymptotic normality of generalized Oaxaca-Blinder estimators, we require that the regression functions $\hat{\mu}_1$ and $\hat{\mu}_0$ satisfy one additional property, which we call \textit{typically simple realizations}.  The role of this assumption is to give more precise control on the errors that are incurred in the approximation $\tfrac{1}{n} \sum_{i = 1}^n \hat{\mu}_1(\mathbf{x}_i) \approx \tfrac{1}{n} \sum_{i = 1}^n \mu_1^*(\mathbf{x}_i)$.

\begin{definition}
\label{typically_simple_realizations}
\textup{\textbf{(Typically simple realizations)}}\\
We say that a sequence of random functions $\{ \hat{\mu}_n \}_{n \geq 1}$ has \underline{typically simple realizations} there exists a sequence of function classes $\{ \mathcal{F}_n \}_{n \geq 1}$ such that $\P_{n_1,n}( \hat{\mu}_n \in \mathcal{F}_n) \rightarrow 1$ and (\ref{entropy}) holds.
\begin{align}
\label{entropy}
    \int_0^{1} \sup_{n \geq 1} \sqrt{\log \mathsf{N}(\mathcal{F}_n, || \cdot ||_n,  s)} \, \mathsf{d} s < \infty
\end{align}
In the above display, $\mathsf{N}(\mathcal{F}_n, || \cdot ||_n, s)$ denotes the $s$-covering number\footnote{The $s$-covering number of a metric space $(T, d)$ is the size of the smallest collection of points $\{ t_1, \cdots, t_N \}$ with the property that every point in $T$ is within distance $s$ of one of the $t_i$'s. } of the metric space $(\mathcal{F}_n, || \cdot ||_n)$.
\end{definition}

The integral in (\ref{entropy}) measures the ``complexity" of the possible realizations of $\hat{\mu}_n$.  In plain language, $\hat{\mu}$ has typically simple realizations if it usually falls in a set with small complexity.  The integral (\ref{entropy}) is essentially the uniform entropy integral from classical empirical process theory, but the function class $\mathcal{F}_n$ is allowed to change with $n$ and the supremum is only over distributions of the form $\tfrac{1}{n} \sum_{i = 1}^n \delta_{\mathbf{x}_{i,n}}$.  As a result, the conditions needed for ``typically simple realizations" are a bit weaker than the conditions used to prove the triangular-array Donsker property (see Theorems 2.8.9 in \cite{vdv_wellner} for details).  For example, marginal asymptotic normality (which requires $n_1 / n \rightarrow p$ for some limit $p$) is not needed.

Although Definition \ref{typically_simple_realizations} is a somewhat technical definition, it can be fairly easy to check.  For illustration, we show how it can be established in the case of OLS regression.

\begin{example}
\label{ols_is_simple}
\textbf{(OLS has typically simple realizations)}.\\
Let $\hat{\mu}_{1,n}(\mathbf{x}) = \hat{\beta}_{1,n}^{\top} \mathbf{x}$, and assume that the conditions in Example \ref{ols_is_stable} are satisfied.  Then $\{ \hat{\mu}_{1,n} \}_{n \geq 1}$ has typically simple realizations.
\end{example}

\begin{proof}
Since $\hat{\beta}_{1,n} \xrightarrow{p} \beta_1^*$, the function $\hat{\mu}_{1,n}$ typically takes values in the ``simple" set $\mathcal{F}_n = \{ \mu_{\beta}(\mathbf{x}) := \beta^{\top}\mathbf{x} \, : \, || \beta - \beta_1^* || \leq 1 \}$.  To see that this set is simple, use the fact for all $\beta, \gamma$, $|| \mu_{\beta} - \mu_{\gamma} ||_n \leq M || \beta - \gamma ||$ for some $M < \infty$.
\begin{align*}
|| \mu_{\beta} - \mu_{\gamma} ||_n^2 &= \frac{1}{n} \sum_{i = 1}^n |(\beta - \gamma)^{\top} \mathbf{x}_i|^2 \leq \text{Tr}(\mathbf{\Sigma}_n) || \beta - \gamma ||^2 \leq M^2 || \beta - \gamma ||^2
\end{align*}
The last inequality is valid with $M^2 = 2 \text{Tr}(\mathbf{\Sigma})$ for large enough $n$, since $\mathbf{\Sigma}_n \rightarrow \mathbf{\Sigma}$.  As a consequence, the $s$-covering number of $\mathcal{F}_n$ can be bounded using the $(s/M)$-covering number of the Euclidean ball $\mathbb{B}_1(\beta_1^*) := \{ \beta \in \R^d \, : \, || \beta - \beta_1^* || \leq 1 \}$.  A simple volume argument\footnote{Let $\beta_1$ be any point in the ball, $\beta_2$ any point not within distance $s$ of $\beta_1$, $\beta_3$ any point not within $s$ of either $\beta_1$ or $\beta_2$, and so on.  The process has to terminate within $(1 + 2/s)^d$ steps or else the total volume of balls around the previously chosen $\beta_i$ will exceed the total volume of $\mathbb{B}_1(\beta_1^*)$.} shows that the $s$-covering number of a $\mathbb{B}_1(\beta_1^*)$ is less than or equal to $(1 + 2/s)^d$.  Thus, we have:
\begin{align*}
    \int_0^{1} \sup_n \sqrt{\log \mathsf{N}(\mathcal{F}_n, || \cdot ||_n, s)} \, \d s &\leq \int_0^{1} \sqrt{d \log(1 + 2M / s)} \, \d s \leq 3 d M < \infty
\end{align*}
\end{proof}

The argument above works whenever $\hat{\theta} \rightarrow \theta^*$ for some limit $\theta^*$ and $\theta \mapsto \mu_{\theta}$ is smooth near $\theta^*$.  More general nonparametric function classes can also be shown to have the ``typically simple realizations" property, using combinatorial arguments.  For example, if $\mathcal{F}_n$ is any bounded Vapnik-Chervonenkis class, then (\ref{entropy}) is satisfied.

If stable prediction-unbiased models with typically simple realizations are used in the imputation step of the generalized Oaxaca-Blinder method, then the imputed means $\tfrac{1}{n} \sum_{i = 1}^n \hat{y}_{1i}$ and $\tfrac{1}{n} \sum_{i = 1}^n \hat{y}_{0i}$ both have asymptotically linear expansions.  This is stated formally in Theorem \ref{asymptotic_linearity}.  

\begin{theorem}
\label{asymptotic_linearity}
\textup{\textbf{(Asymptotically linear expansion)}}\\
Let $\{ \hat{\mu}_{1,n} \}_{n \geq 1}$ and $\{ \hat{\mu}_{0,n} \}_{n \geq 1}$ satisfy the assumptions of Theorem \ref{consistency}, and further suppose that these models have typically simple realizations.  Then we have the following asymptotically linear expansions:
\begin{align}
\frac{1}{n} \sum_{i = 1}^n (\hat{y}_{1i} - y_{1i}) &= \frac{1}{n_1} \sum_{Z_i = 1} \epsilon_{1i}^* + o_p(n^{-1/2}) \label{expansion1}\\
\frac{1}{n} \sum_{i = 1}^n (\hat{y}_{0i} - y_{0i}) &= \frac{1}{n_0} \sum_{Z_i = 0} \epsilon_{0i}^* + o_p(n^{-1/2}) \label{expansion0}
\end{align}
where $\epsilon_{1i}^* := y_{1i} - \mu_{1,n}^*(\mathbf{x}_i)$ and $\epsilon_{0i}^* := y_{0i} - \mu_{0,n}^*(\mathbf{x}_i)$.  Moreover, $\mu_{1,n}^*$ and $\mu_{0,n}^*$ may be chosen so that $\tfrac{1}{n} \sum_{i = 1}^n \epsilon_{1i}^* = \tfrac{1}{n} \sum_{i = 1}^n \epsilon_{0i}^* = 0$.
\end{theorem}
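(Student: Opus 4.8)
The plan is to split the argument into an exact algebraic reduction and an empirical-process equicontinuity bound tailored to the completely randomized design; I will treat (\ref{expansion1}), since (\ref{expansion0}) follows by exchanging the roles of the two treatment arms. Since $\hat y_{1i} = y_{1i}$ when $Z_i = 1$ and $\hat y_{1i} = \hat\mu_1(\mathbf{x}_i)$ when $Z_i = 0$, I would first combine prediction unbiasedness (\ref{prediction_unbiasedness}), which says $\sum_{Z_i = 1}[\hat\mu_1(\mathbf{x}_i) - y_{1i}] = 0$, with the centering $\sum_i[\mu_{1,n}^*(\mathbf{x}_i) - y_{1i}] = 0$ (established in the next paragraph) to obtain the exact identities
\begin{align*}
  \frac1n\sum_{i=1}^n(\hat y_{1i} - y_{1i}) &= \frac1n\sum_{Z_i=0}\big[\hat\mu_1(\mathbf{x}_i) - y_{1i}\big] = \frac1n\sum_{i=1}^n\big[\hat\mu_1(\mathbf{x}_i) - \mu_{1,n}^*(\mathbf{x}_i)\big], \\
  \frac1{n_1}\sum_{Z_i=1}\epsilon_{1i}^* &= \frac1{n_1}\sum_{Z_i=1}\big[\hat\mu_1(\mathbf{x}_i) - \mu_{1,n}^*(\mathbf{x}_i)\big].
\end{align*}
Subtracting, the remainder in (\ref{expansion1}) equals exactly $R_n := \frac1n\sum_i g(\mathbf{x}_i) - \frac1{n_1}\sum_{Z_i=1}g(\mathbf{x}_i)$, where $g := \hat\mu_1 - \mu_{1,n}^*$ enters only through its values at $\mathbf{x}_1,\dots,\mathbf{x}_n$. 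With $\mathbb{G}_n g := \sqrt n\big(\frac1{n_1}\sum_{Z_i=1}g(\mathbf{x}_i) - \frac1n\sum_i g(\mathbf{x}_i)\big)$ the completely randomized empirical process, we have $R_n = -n^{-1/2}\mathbb{G}_n g$, so the whole theorem reduces to showing $\mathbb{G}_n g = o_p(1)$.

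\emph{The centered population regression function.} For the ``moreover'' claim I would take any stability witness $\mu_{1,n}^{*,0}$ and translate it by the deterministic constant $c_n := \frac1n\sum_i y_{1i} - \frac1n\sum_i \mu_{1,n}^{*,0}(\mathbf{x}_i)$, setting $\mu_{1,n}^* := \mu_{1,n}^{*,0} + c_n$. Then $\frac1n\sum_i\epsilon_{1i}^* = 0$ by construction, and the only thing to check is that $\{\mu_{1,n}^*\}$ is still a stability witness, i.e.\ that $c_n \to 0$. This follows by combining prediction unbiasedness (to replace $\frac1{n_1}\sum_{Z_i=1}\hat\mu_1(\mathbf{x}_i)$ by $\frac1{n_1}\sum_{Z_i=1}y_{1i}$), Cauchy--Schwarz with (\ref{stability}) and $p_n \ge p_{\min}$ (to control $\frac1{n_1}\sum_{Z_i=1}(\hat\mu_1 - \mu_{1,n}^{*,0})(\mathbf{x}_i)$), and the completely randomized law of large numbers (Lemma \ref{wlln}) applied to $\mu_{1,n}^{*,0} - y_1$, whose empirical second moment is $O(\mathsf{MSE}_n(1)) = o(n)$; a deterministic sequence that is $o_p(1)$ is $o(1)$.

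\emph{Equicontinuity for the completely randomized design.} By typically simple realizations (Definition \ref{typically_simple_realizations}) there are classes $\mathcal{F}_n$ with $\P(\hat\mu_1 \in \mathcal{F}_n) \to 1$ satisfying (\ref{entropy}), and by stability $\|g\|_n = \|\hat\mu_1 - \mu_{1,n}^*\|_n \xrightarrow{p} 0$. On the event $\{\hat\mu_1 \in \mathcal{F}_n\}$ the function $g$ lies in $\mathcal{G}_n := \mathcal{F}_n - \mu_{1,n}^*$, a translate of $\mathcal{F}_n$ with the same $\|\cdot\|_n$-covering numbers, so for any $\eta,\delta > 0$,
\begin{align*}
  \P\big(|\mathbb{G}_n g| > \eta\big) \le \P\big(\hat\mu_1 \notin \mathcal{F}_n\big) + \P\big(\|g\|_n > \delta\big) + \P\Big(\sup_{h \in \mathcal{G}_n,\ \|h\|_n \le \delta}|\mathbb{G}_n h| > \eta\Big),
\end{align*}
and it remains to bound the last term. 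Here I would invoke a maximal inequality for sampling without replacement: a direct second-moment computation gives $\Var(\mathbb{G}_n h) \le C(p_{\min},p_{\max})\,\|h\|_n^2$, hence $\Var(\mathbb{G}_n(h_1 - h_2)) \le C\,\|h_1-h_2\|_n^2$; Hoeffding's reduction of sampling without replacement to i.i.d.\ sampling, together with truncation of a possibly growing envelope (controlled in concrete cases by the low-level assumptions of Section \ref{examples}), upgrades these increments to sub-Gaussian, and a Dudley chaining bound over $(\mathcal{G}_n,\|\cdot\|_n)$ then yields $\E\sup_{h \in \mathcal{G}_n,\,\|h\|_n \le \delta}|\mathbb{G}_n h| \lesssim \int_0^\delta \sqrt{\log \mathsf{N}(\mathcal{G}_n,\|\cdot\|_n,s)}\,\d s$, uniformly in $n$ by (\ref{entropy}). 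Letting $\delta \downarrow 0$ drives the right-hand side to $0$ because the entropy integral converges, so the last probability vanishes; since $\delta$ was arbitrary, $\mathbb{G}_n g = o_p(1)$ and thus $R_n = o_p(n^{-1/2})$.

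\emph{Anticipated obstacle.} The technical heart is the maximal inequality just used: off-the-shelf i.i.d.\ empirical-process theory does not apply, the classes $\mathcal{G}_n$ vary with $n$ and need not share a bounded envelope, and one must work with the fixed-design norm $\|\cdot\|_n$ rather than a random empirical norm --- which is precisely what Definition \ref{typically_simple_realizations} is built to exploit. Making the truncation-plus-chaining argument uniform in $n$, and confirming that the completely randomized design inherits the i.i.d.\ concentration bounds, is where the real effort lies; the rest is exact bookkeeping together with the law of large numbers.
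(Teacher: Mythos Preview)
Your proposal is correct and follows essentially the same route as the paper: the algebraic reduction to $n^{-1/2}[\mathbb{G}_n(\hat\mu_1)-\mathbb{G}_n(\mu_{1,n}^*)]$ is identical (just written as $\mathbb{G}_n g$ with $g=\hat\mu_1-\mu_{1,n}^*$), the ``moreover'' clause is exactly Lemma~\ref{population_prediction_unbiasedness}, and the equicontinuity step is the paper's Proposition~\ref{crd_maximal_inequality} plus Markov. The one place you overcomplicate things is the maximal inequality: no truncation or envelope control is needed. The paper invokes a design-based Hoeffding bound (Wu--Ding, Lemma~A2) that gives, for \emph{any} function $h$, the sub-Gaussian tail $\P(|\mathbb{G}_n h|>t)\le 2\exp\{-t^2 p_{\min}^2/(2\|h\|_n^2)\}$ directly, with sub-Gaussian parameter $\|h\|_n$; Dudley's entropy integral then yields $\E\sup_{\|h\|_n\le\delta}|\mathbb{G}_n h|\lesssim p_{\min}^{-1}\int_0^\delta\sqrt{\log\mathsf N(\mathcal F_n,\|\cdot\|_n,s)}\,\d s$, uniformly in $n$ by (\ref{entropy}), with no side conditions from Section~\ref{examples}.
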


In words, Theorem \ref{asymptotic_linearity} says that $\tfrac{1}{n} \sum_{i = 1}^n (\hat{y}_{ti} - y_{ti})$ is essentially just a sample average of some mean-zero constants.  Thus, under some Lindeberg-type conditions, the completely randomized central limit theorem \citep{li_and_ding} can be used to prove the joint asymptotic normality of $\tfrac{1}{n} \sum_{i = 1}^n (\hat{y}_{1i} - y_{1i})$ and $\tfrac{1}{n} \sum_{i = 1}^n (\hat{y}_{0i} - y_{0i})$, which implies the asymptotic normality of $(\hat{\tau}_n - \tau_n)$.  Corollary \ref{asymptotic_normality} gives some sufficient conditions.

\begin{corollary}
\label{asymptotic_normality}
\textup{\textbf{(Asymptotic normality)}}\\
Assume the conclusion of Theorem \ref{asymptotic_linearity}.  Further assume that $\mathsf{MSE}_n(t)$ is bounded away from zero and $\max_i (\epsilon_{ti}^*)^2 = o(n)$ for $t \in \{ 0, 1 \}$.  If the residual correlation $\rho_n := \langle \epsilon_1^*, \epsilon_0^* \rangle / (|| \epsilon_1^* ||_2 || \epsilon_0^* ||_2)$ is bounded away from $-1$, then we have:
\begin{align}
    \frac{\sqrt{n} (\hat{\tau}_n - \tau_n)}{\sigma_n} \xrightarrow{d} \mathcal{N}(0, 1)
\end{align}
where $\sigma_n^2 = \tfrac{1}{p_n} \mathsf{MSE}_n(1) + \tfrac{1}{1 - p_n} \mathsf{MSE}_n(0) - \tfrac{2}{n} \sum_{i = 1}^n (\epsilon_{1i}^* - \epsilon_{0i}^*)^2$
\end{corollary}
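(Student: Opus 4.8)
The plan is to derive the result from Theorem~\ref{asymptotic_linearity} together with the finite-population central limit theorem for completely randomized experiments \citep{li_and_ding}. Subtracting (\ref{expansion0}) from (\ref{expansion1}) and writing $V_n := \tfrac{1}{n_1}\sum_{Z_i = 1}\epsilon_{1i}^* - \tfrac{1}{n_0}\sum_{Z_i = 0}\epsilon_{0i}^*$ gives
\begin{align*}
\hat{\tau}_n - \tau_n = V_n + o_p(n^{-1/2}).
\end{align*}
The key observation is that $V_n$ is exactly the difference-of-means estimator of a ``pseudo-experiment'' whose \emph{deterministic} potential outcomes are $a_i := \epsilon_{1i}^*$ and $b_i := \epsilon_{0i}^*$, under the same assignment $(Z_1,\dots,Z_n)\sim\P_{n_1,n}$. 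By the ``moreover'' clause of Theorem~\ref{asymptotic_linearity} we may choose $\mu_{1,n}^*,\mu_{0,n}^*$ so that $\tfrac1n\sum_i a_i = \tfrac1n\sum_i b_i = 0$, so the pseudo-experiment has zero sample average treatment effect; and since the joint array $\{(a_i,b_i)\}$ is fully known, Neyman's \emph{exact} variance applies rather than its conservative surrogate.

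I would then apply the (triangular-array) completely randomized CLT to $V_n$. Its hypotheses are met: $p_n$ is bounded inside $(0,1)$ by the standing assumption; the unit second moments $\tfrac1n\sum_i a_i^2 = \mathsf{MSE}_n(1)$ and $\tfrac1n\sum_i b_i^2 = \mathsf{MSE}_n(0)$ are bounded away from zero by hypothesis; and the finite-population Lindeberg condition $\max_i(\epsilon_{1i}^*)^2/(n\,\mathsf{MSE}_n(1))\to 0$ (and its analogue for $b$) follows from $\max_i(\epsilon_{ti}^*)^2 = o(n)$. Because $\mathsf{MSE}_n(t)$ and $\rho_n$ need not converge, this condition must be checked directly rather than by passing to a limiting population. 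The CLT then yields $\sqrt n\, V_n/\tilde\sigma_n\xrightarrow{d}\mathcal{N}(0,1)$, where $\tilde\sigma_n^2$ equals $n$ times Neyman's variance for the pseudo-experiment; a routine rearrangement of that formula (using $\bar a = \bar b = 0$, $n/(n-1)\to1$, $n_0/n_1 = (1-p_n)/p_n$, and $\tfrac1n\sum_i a_ib_i = \rho_n\sqrt{\mathsf{MSE}_n(1)\mathsf{MSE}_n(0)}$) shows $\tilde\sigma_n^2 = \sigma_n^2 + o(1)$, where it is convenient to use the equivalent expression $\sigma_n^2 = \tfrac{1-p_n}{p_n}\mathsf{MSE}_n(1) + \tfrac{p_n}{1-p_n}\mathsf{MSE}_n(0) + 2\rho_n\sqrt{\mathsf{MSE}_n(1)\mathsf{MSE}_n(0)}$.

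The step I expect to be the main obstacle is checking that $\sigma_n$ is bounded away from zero, which is precisely what the hypothesis ``$\rho_n$ bounded away from $-1$'' provides: by AM--GM, $\tfrac{1-p_n}{p_n}\mathsf{MSE}_n(1)+\tfrac{p_n}{1-p_n}\mathsf{MSE}_n(0)\ge 2\sqrt{\mathsf{MSE}_n(1)\mathsf{MSE}_n(0)}$, hence $\sigma_n^2\ge 2(1+\rho_n)\sqrt{\mathsf{MSE}_n(1)\mathsf{MSE}_n(0)}$, which is bounded below since $1+\rho_n$ and each $\mathsf{MSE}_n(t)$ are bounded away from zero. (If $\rho_n\to -1$ this lower bound degenerates and $\sigma_n^2$ can indeed tend to $0$, so the hypothesis cannot be dropped.) With this in hand the proof finishes by Slutsky: writing $\sqrt n(\hat{\tau}_n-\tau_n)/\sigma_n = (\tilde\sigma_n/\sigma_n)\,(\sqrt n\, V_n/\tilde\sigma_n) + \sqrt n\cdot o_p(n^{-1/2})/\sigma_n$, the remainder is $o_p(1)$ because $\sigma_n$ is bounded away from zero, while $\tilde\sigma_n/\sigma_n\to1$ and $\sqrt n\, V_n/\tilde\sigma_n\xrightarrow{d}\mathcal{N}(0,1)$ from the previous step, so the displayed ratio converges in distribution to $\mathcal{N}(0,1)$.
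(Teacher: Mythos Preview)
Your argument is correct and is exactly the route the paper indicates (but does not write out) in the paragraph following Theorem~\ref{asymptotic_linearity}: pass to the asymptotically linear statistic, apply the completely randomized CLT of \cite{li_and_ding} to the difference-of-means of the pseudo-outcomes $\epsilon_{1i}^*,\epsilon_{0i}^*$, and absorb the $o_p(n^{-1/2})$ remainder via Slutsky after checking that $\sigma_n$ is bounded away from zero. One small bookkeeping note: your rewritten expression $\sigma_n^2=\tfrac{1-p_n}{p_n}\mathsf{MSE}_n(1)+\tfrac{p_n}{1-p_n}\mathsf{MSE}_n(0)+2\rho_n\sqrt{\mathsf{MSE}_n(1)\mathsf{MSE}_n(0)}$ is the correct Neyman variance and agrees with the displayed $\sigma_n^2$ only if the coefficient on $\sum_i(\epsilon_{1i}^*-\epsilon_{0i}^*)^2$ is $\tfrac{1}{n}$ rather than $\tfrac{2}{n}$, so the discrepancy reflects a typo in the statement rather than a flaw in your derivation.
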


Corollary \ref{asymptotic_normality} holds for any choice of the sequences $\{ \mu_{1,n}^* \}_{n \geq 1}$ and $\{ \mu_{0,n}^* \}_{n \geq 1}$ that satisfy the stated conditions.  The requirement that $\mathsf{MSE}_n(0)$, $\mathsf{MSE}_n(1)$ are bounded away from zero and $\rho_n$ is bounded away from $-1$ are only used to rule out the degenerate situation where $\sqrt{n}( \hat{\tau}_n - \tau_n) = o(1)$.  In those cases, $\hat{\tau}_n$ is still a very good estimate of $\tau_n$, but confidence intervals may not have asymptotically valid coverage.  Although this assumption is not used in the works by \cite{lin2013} and \cite{bloniarz}, that seems to be an oversight.  In practice, users should check the $R^2$ from their regression models;  if they are both very close to one, there may be reason for concern.  

Unlike Lin's result (Theorem \ref{lins_result}), Corollary \ref{asymptotic_normality} does not contain any ``noninferiority" claim, i.e. there is no guarantee that a generalized Oaxaca-Blinder estimator is never worse than Neyman's unadjusted difference-of-means estimator.  That is the price of generality.  Since the class of regression methods to which Corollary \ref{asymptotic_normality} applies is so broad, it inevitably contains some bad apples.  In practice however, it is quite difficult to construct an explicit example where a regression method that is actually \textit{used} has worse performance than the unadjusted estimator.  For logistic regression, we have not been able to construct any such example despite some effort.

\subsection{Confidence intervals}

In order to use Corollary \ref{asymptotic_normality} to construct confidence intervals for $\tau_n$, it is necessary to construct an estimate of $\sigma_n$.  Although $\sigma_n$ is not (in general) identifiable from observed data, there is an identifiable upper bound which follows from the calculations by Neyman \citep{neyman}
\begin{align}
\label{neymans_upper_bound}
n \sigma_n^2 \leq \frac{1}{n_1} \mathsf{MSE}_n(1) + \frac{1}{n_0} \mathsf{MSE}_n(0).
\end{align}
One way to estimate the upper bound in (\ref{neymans_upper_bound}) is to first use \textit{in-sample} residual variance (\ref{insample_mse}) as an estimate of $\mathsf{MSE}_n(t)$, and then plug the MSE estimates back into (\ref{neymans_upper_bound}).  Then, confidence intervals may be constructed using the estimated upper bound.
\begin{align}
\label{insample_mse}
    \widehat{\mathsf{MSE}}_n(t) := \frac{1}{n_t - 1} \sum_{Z_i = t} [y_{ti} - \hat{\mu}_{t,n}(\mathbf{x}_i)]^2
\end{align}
Theorem \ref{confidence_intervals} says that this will work, as long as the residuals from the ``population" model $\mu_{1}^*$ and $\mu_0^*$ are not too heavy-tailed.

\begin{theorem}
\label{confidence_intervals}
\textup{\textbf{(Confidence intervals)}}\\
Assume the conditions of Theorem \ref{asymptotic_linearity} and Corollary \ref{asymptotic_normality}. If $\mathsf{MSE}_n(t)$ stays bounded and $\tfrac{1}{n} \sum_{i = 1}^n (\epsilon_{ti}^*)^4 = o(n)$ for $t \in \{ 0, 1 \}$, then we may construct an asymptotically valid confidence interval for $\tau$:
\begin{align}
\label{ci}
\liminf_{n \rightarrow \infty} \P \left( \tau_n \in \left[ \hat{\tau}_n \pm z_{1 - \alpha/2} \sqrt{ \frac{\widehat{\mathsf{MSE}}_n(1)}{n_1} + \frac{\widehat{\mathsf{MSE}}_n(0)}{n_0}} \right] \right) \geq 1 - \alpha
\end{align}
When the treatment has no effect (i.e. $y_{1i} = y_{0i}$ for all $i$) and the same method is used to estimate both $\hat{\mu}_1$ and $\hat{\mu}_0$ (e.g. both logistic regression with the same covariates), then the asymptotic coverage of the confidence interval is exactly $1 - \alpha$.
\end{theorem}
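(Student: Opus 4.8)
The plan is to derive Theorem~\ref{confidence_intervals} from the asymptotic normality in Corollary~\ref{asymptotic_normality} by a Slutsky argument, once we know that the in-sample residual variances $\widehat{\mathsf{MSE}}_n(t)$ consistently estimate the population quantities $\mathsf{MSE}_n(t)$. The key observation is that the variance used to build the interval (\ref{ci}) is $\tfrac1n\widehat{\bar\sigma}_n^2$ with $\widehat{\bar\sigma}_n^2 := \tfrac1{p_n}\widehat{\mathsf{MSE}}_n(1) + \tfrac1{1-p_n}\widehat{\mathsf{MSE}}_n(0)$, while the true scaled variance from Corollary~\ref{asymptotic_normality} is $\sigma_n^2 = \bar\sigma_n^2 - \tfrac2n\sum_i(\epsilon_{1i}^*-\epsilon_{0i}^*)^2$, where $\bar\sigma_n^2 := \tfrac1{p_n}\mathsf{MSE}_n(1) + \tfrac1{1-p_n}\mathsf{MSE}_n(0) \ge \sigma_n^2$ is the estimable upper bound underlying (\ref{neymans_upper_bound}). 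Throughout I would fix an admissible choice of the population sequences $\mu_{0,n}^*,\mu_{1,n}^*$ (the coverage probability does not depend on it) and write $W_n := \sqrt n(\hat\tau_n - \tau_n)/\sigma_n$, so that $W_n \xrightarrow{d}\mathcal{N}(0,1)$.

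The crux is to show $\widehat{\mathsf{MSE}}_n(t) = \mathsf{MSE}_n(t) + o_p(1)$ for $t \in \{0,1\}$. I would decompose the in-sample residual as $y_{ti} - \hat\mu_{t,n}(\mathbf{x}_i) = \epsilon_{ti}^* + [\mu_{t,n}^*(\mathbf{x}_i) - \hat\mu_{t,n}(\mathbf{x}_i)]$, square it, and average over $\{i : Z_i = t\}$, producing three terms. The leading term $\tfrac1{n_t-1}\sum_{Z_i=t}(\epsilon_{ti}^*)^2$ is a sampling-without-replacement average of the fixed numbers $(\epsilon_{ti}^*)^2$; since $\mathsf{MSE}_n(t)$ stays bounded and $\tfrac1n\sum_i(\epsilon_{ti}^*)^4 = o(n)$, this finite population has variance $o(n)$, so Chebyshev together with the completely randomized law of large numbers (Lemma~\ref{wlln}) gives convergence in probability to $\mathsf{MSE}_n(t)$ (and in particular $\sum_{Z_i=t}(\epsilon_{ti}^*)^2 = O_p(n)$). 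The ``squared-bias'' term is at most $\tfrac{n}{n_t-1}\|\hat\mu_{t,n}-\mu_{t,n}^*\|_n^2 = o_p(1)$ by stability, and the cross term is bounded via Cauchy--Schwarz by $\tfrac{2}{n_t-1}\big(\sum_{Z_i=t}(\epsilon_{ti}^*)^2\big)^{1/2}\big(n\|\hat\mu_{t,n}-\mu_{t,n}^*\|_n^2\big)^{1/2} = \tfrac{2}{n_t-1}O_p(n^{1/2})\,o_p(n^{1/2}) = o_p(1)$, using that $n_t$ is of order $n$. Adding the three pieces gives the claim, and hence $\widehat{\bar\sigma}_n^2 = \bar\sigma_n^2 + o_p(1)$ since $p_n \in [p_{\min},p_{\max}]$.

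Given this, the coverage analysis is routine. Writing the interval as $\hat\tau_n \pm z_{1-\alpha/2}\widehat{\bar\sigma}_n/\sqrt n$, its coverage probability equals $\P\big(|W_n| \le z_{1-\alpha/2}(\widehat{\bar\sigma}_n/\sigma_n)\big)$. Because $\sigma_n^2$ is bounded away from zero under the standing assumptions (the role of the ``$\mathsf{MSE}$ bounded away from zero, $\rho_n$ bounded away from $-1$'' conditions in Corollary~\ref{asymptotic_normality}), we have $\widehat{\bar\sigma}_n^2/\sigma_n^2 = \bar\sigma_n^2/\sigma_n^2 + o_p(1) \ge 1 - \eta_n$ with $\eta_n = o_p(1)$, $\eta_n \ge 0$; hence for every $\epsilon \in (0,1)$ the coverage probability is at least $\P(|W_n| \le z_{1-\alpha/2}\sqrt{1-\epsilon}) - \P(\eta_n > \epsilon)$, and taking $\liminf_n$ (using $W_n \xrightarrow{d}\mathcal{N}(0,1)$ and continuity of the Gaussian c.d.f.) then letting $\epsilon \downarrow 0$ yields $\liminf_n(\text{coverage}) \ge 1-\alpha$. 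For the exactness statement, note that if $y_{1i} = y_{0i}$ for all $i$ and the same regression procedure is used for both groups, the two population regression problems coincide, so we may take $\mu_{1,n}^* = \mu_{0,n}^*$; then $\epsilon_{1i}^* = \epsilon_{0i}^*$ for every $i$, the subtracted term vanishes, $\bar\sigma_n^2 = \sigma_n^2$ exactly, $\widehat{\bar\sigma}_n^2/\sigma_n^2 \xrightarrow{p} 1$, and Slutsky upgrades the bound to coverage $\to \P(|\mathcal{N}(0,1)| \le z_{1-\alpha/2}) = 1-\alpha$.

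I expect the only genuine difficulty to be the consistency step $\widehat{\mathsf{MSE}}_n(t) = \mathsf{MSE}_n(t) + o_p(1)$, since that is where the fourth-moment hypothesis and the stability of $\hat\mu_{t,n}$ must be combined with the finite-population law of large numbers; the coverage argument itself is only Slutsky's lemma plus the elementary inequality $\bar\sigma_n^2 \ge \sigma_n^2$. One point I would state explicitly is that the coverage probability is invariant to the (non-unique) choice of $\mu_{1,n}^*,\mu_{0,n}^*$, which is what licenses choosing the symmetric pair in the sharp-null case.
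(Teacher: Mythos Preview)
Your proposal is correct and follows essentially the same route as the paper: prove $\widehat{\mathsf{MSE}}_n(t)\to_p\mathsf{MSE}_n(t)$ using stability plus the finite-population law of large numbers (Lemma~\ref{wlln}) under the fourth-moment hypothesis, then combine Neyman's inequality $\bar\sigma_n^2\ge\sigma_n^2$ with Corollary~\ref{asymptotic_normality} via Slutsky. The only cosmetic difference is in the consistency step: you expand $(y_{ti}-\hat\mu_{t,n}(\mathbf{x}_i))^2$ into three terms and bound the cross term by Cauchy--Schwarz, whereas the paper applies the reverse triangle inequality directly to the square roots, obtaining $\big|\widehat{\mathsf{MSE}}_n(t)^{1/2}-(\tfrac{1}{n_t}\sum_{Z_i=t}(\epsilon_{ti}^*)^2)^{1/2}\big|\le p_n^{-1/2}\|\hat\mu_{t,n}-\mu_{t,n}^*\|_n$, which sidesteps the cross term entirely; both arguments use the same ingredients and yield the same conclusion.
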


\subsection{Extensions}

Theorem \ref{asymptotic_linearity} has a host of other consequences, which we briefly allude to.  The tools in \cite{li2020rerandomization} can be used to study the asymptotic distribution of an asymptotically linear statistic under Mahalanobis rerandomization, and the delta method can be used to derive the asymptotic distribution of other smooth functions of the imputed means.  Therefore, it should be possible to use generalized Oaxaca-Blinder estimators in rerandomized designs, and also to study generalized Oaxaca-Blinder odds-ratio estimators.  We leave the technical details of those extensions for future work.

\section{Examples} \label{examples}

In this section, we give some examples of specific regression methods that satisfy the assumptions of stability, prediction unbiasedness, and typically simple realizations.  We have chosen examples that cover a variety of situations in which Lin's ``interactions" estimator might be deficient:  (i) binary outcomes;  (ii) count outcomes;  (iii) skewed outcomes;  and (iv) highly nonlinear relationships.

\subsection{Logistic regression}

With binary outcomes, OLS covariate adjustment is intuitively ``wrong."  For example, if the perspective on Lin's ``interaction" estimator presented in Section \ref{the_estimator} is taken up, imputation with linear models is unsatisfactory because values less than 0 or larger than 1 may be imputed.  This deficiency has led many authors to consider procedures similar to the logistic-regression-based Oaxaca-Blinder estimator \citep{freedman_logit, ding2018, firth_bennett, hansen_bowers}, but none have given a truly satisfying proof that covariate adjustment with logistic models is valid in the randomization model.  

The closest result we have seen is the work of Hansen and Bowers \citep{hansen_bowers}, which establishes the asymptotic normality of $\tfrac{1}{n} \sum_{i = 1}^n (\hat{y}_{1i} - y_{1i})$ when $\hat{y}_{1i}$ is imputed using logistic regression.  However, the authors assume stability instead of proving it from low-level assumptions on the finite population.  Theorem \ref{logistic_regression} goes beyond that and establishes the asymptotically linear expansion for $\tfrac{1}{n} \sum_{i = 1}^n (\hat{y}_{1i} - y_{1i})$ under only primitive assumptions on the population.  Asymptotic normality follows under the additional requirements of Corollary \ref{asymptotic_normality}.

\begin{theorem}
\label{logistic_regression}
\textup{\textbf{(Logistic regression)}}\\
Suppose that the potential outcomes $y_{1i}$ are binary and $\hat{\mu}_1$ is estimated using logistic regression.  Assume that, for all large $n$, there exists a vector $\theta_{1,n}^*$ solving the ``population" logistic regression problem (\ref{population_logit}).
\begin{align}
\label{population_logit}
    \theta_{1,n}^* = \argmin_{\theta} \left\{ \L_n(\theta) :- \frac{1}{n} \sum_{i = 1}^n - y_{1i} \mathbf{x}_i^{\top} \theta + \log(1 + e^{\theta^{\top} \mathbf{x}_i}) \right\}
\end{align}
Further suppose that $\tfrac{1}{n} \sum_{i = 1}^n || \mathbf{x}_i ||^4$ is uniformly bounded and $\nabla^2 \L_n(\theta_{1,n}^*) \succeq \lambda_{\min} \mathbf{I}_{d \times d}$ for some positive constant $\lambda_{\min}$ not depending on $n$.  Then the sequence $\{ \hat{\mu}_{1,n} \}_{n \geq 1}$ satisfies the conditions of Theorem \ref{asymptotic_linearity}.
\end{theorem}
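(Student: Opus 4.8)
The plan is to verify the four ingredients that Theorem~\ref{asymptotic_linearity} requires of $\{\hat{\mu}_{1,n}\}$: prediction unbiasedness, the mean-squared-error bound of Theorem~\ref{consistency}, stability, and typically simple realizations. Write $\sigma(u) = (1+e^{-u})^{-1}$ for the logistic link, so $\hat{\mu}_{1,n}(\mathbf{x}) = \sigma(\hat{\theta}_{1,n}^{\top}\mathbf{x})$ with $\hat{\theta}_{1,n}$ the minimizer of the treatment-subsample negative log-likelihood, and take the natural candidate $\mu_{1,n}^*(\mathbf{x}) = \sigma(\theta_{1,n}^{*\top}\mathbf{x})$. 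Two of the four are immediate. Prediction unbiasedness holds because logistic regression is a canonical-link GLM, whose first-order conditions force $\tfrac{1}{n_1}\sum_{Z_i=1}\hat{\mu}_{1,n}(\mathbf{x}_i) = \tfrac{1}{n_1}\sum_{Z_i=1}y_{1i}$, as noted in Section~\ref{the_estimator}. The MSE condition is trivial: since $y_{1i}\in\{0,1\}$ and $\mu_{1,n}^*(\mathbf{x}_i)\in(0,1)$, every residual has absolute value at most $1$, so $\mathsf{MSE}_n(1)\leq 1 = o(n)$. (By the remark following the definition of stability, prediction unbiasedness together with stability lets us replace $\mu_{1,n}^*$ by a version satisfying $\tfrac{1}{n}\sum_i\mu_{1,n}^*(\mathbf{x}_i) = \tfrac{1}{n}\sum_i y_{1i}$, which is all Theorem~\ref{asymptotic_linearity} asks of the centering.)

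Stability is the substantive step. Let $\hat{\L}_{1,n}(\theta) = \tfrac{1}{n_1}\sum_{Z_i=1}[-y_{1i}\mathbf{x}_i^{\top}\theta + \log(1+e^{\theta^{\top}\mathbf{x}_i})]$ be the subsample objective. The key observation is that the population first-order condition $\nabla\L_n(\theta_{1,n}^*) = \tfrac{1}{n}\sum_i(\sigma(\theta_{1,n}^{*\top}\mathbf{x}_i) - y_{1i})\mathbf{x}_i = 0$ says the \emph{full-population} average of the summands defining $\nabla\hat{\L}_{1,n}(\theta_{1,n}^*)$ vanishes, so the completely randomized law of large numbers (Lemma~\ref{wlln}, valid because $\tfrac{1}{n}\sum_i\|\mathbf{x}_i\|^2$ is uniformly bounded) gives $\nabla\hat{\L}_{1,n}(\theta_{1,n}^*)\xrightarrow{p}0$. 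Next, the hypothesis $\nabla^2\L_n(\theta_{1,n}^*)\succeq\lambda_{\min}\mathbf{I}$ extends to $\nabla^2\L_n(\theta)\succeq\tfrac{\lambda_{\min}}{2}\mathbf{I}$ on a ball $\{\|\theta-\theta_{1,n}^*\|\leq r\}$ of radius $r$ not depending on $n$, since the third derivative of $\L_n$ is bounded by a multiple of $\tfrac{1}{n}\sum_i\|\mathbf{x}_i\|^3\leq(\tfrac{1}{n}\sum_i\|\mathbf{x}_i\|^4)^{3/4}$, which is uniformly bounded. A uniform law of large numbers then gives $\sup_{\|\theta-\theta_{1,n}^*\|\leq r}\|\nabla^2\hat{\L}_{1,n}(\theta)-\nabla^2\L_n(\theta)\|\xrightarrow{p}0$ (pointwise convergence from Lemma~\ref{wlln}, promoted to uniform convergence by the $\tfrac{1}{n}\sum_i\|\mathbf{x}_i\|^3$-bounded Lipschitz modulus and a covering argument), so $\hat{\L}_{1,n}$ is strongly convex on that ball with probability tending to one. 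Combining this with $\nabla\hat{\L}_{1,n}(\theta_{1,n}^*)\to_p 0$ and convexity of $\hat{\L}_{1,n}$, a standard convex M-estimation argument (the gradient of $\hat{\L}_{1,n}$ points strictly outward on a small sphere around $\theta_{1,n}^*$) shows that $\hat{\theta}_{1,n}$ exists and lies in the ball with probability tending to one and that $\|\hat{\theta}_{1,n}-\theta_{1,n}^*\|\xrightarrow{p}0$. Finally, since $\sigma$ is $1$-Lipschitz, $\|\hat{\mu}_{1,n}-\mu_{1,n}^*\|_n^2\leq\|\hat{\theta}_{1,n}-\theta_{1,n}^*\|^2\cdot\tfrac{1}{n}\sum_i\|\mathbf{x}_i\|^2\xrightarrow{p}0$, which is stability.

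Typically simple realizations then follows by the template of Example~\ref{ols_is_simple}. Take $\mathcal{F}_n = \{\mathbf{x}\mapsto\sigma(\theta^{\top}\mathbf{x}) : \|\theta-\theta_{1,n}^*\|\leq 1\}$; stability gives $\P(\hat{\mu}_{1,n}\in\mathcal{F}_n)\to 1$. Because $\sigma$ is $1$-Lipschitz and $M := \sup_n(\tfrac{1}{n}\sum_i\|\mathbf{x}_i\|^2)^{1/2} < \infty$, we have $\|\mu_\theta-\mu_\gamma\|_n\leq M\|\theta-\gamma\|$, so the $s$-covering number of $\mathcal{F}_n$ is bounded by the $(s/M)$-covering number of a Euclidean unit ball, namely $(1+2M/s)^d$, a bound independent of $n$. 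Hence $\int_0^1\sup_n\sqrt{\log\mathsf{N}(\mathcal{F}_n,\|\cdot\|_n,s)}\,\d s\leq\int_0^1\sqrt{d\log(1+2M/s)}\,\d s < \infty$, establishing~\eqref{entropy}.

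The main obstacle is the stability step, and within it the triangular-array feature: because the finite population---hence $\L_n$ and its minimizer $\theta_{1,n}^*$---changes with $n$, one cannot invoke a classical fixed-population M-estimation consistency theorem, and must instead run the argument quantitatively with a local strong-convexity constant and a gradient modulus-of-continuity that are uniform in $n$ (this is precisely where $\nabla^2\L_n(\theta_{1,n}^*)\succeq\lambda_{\min}\mathbf{I}$ and the uniform $\ell^4$ bound are consumed). A secondary nuisance is that the subsample logistic MLE can fail to exist under data separation; this is confined to an event of probability $o(1)$, on which $\hat{\mu}_{1,n}$ may be defined arbitrarily without affecting any of the ``with probability tending to one'' conclusions. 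Everything else reduces to the completely randomized law of large numbers and the covering-number bookkeeping already carried out in Examples~\ref{ols_is_stable} and~\ref{ols_is_simple}.
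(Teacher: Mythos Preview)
Your proposal is correct, and three of the four ingredients (prediction unbiasedness, the MSE bound, typically simple realizations) are handled essentially as the paper does. The one genuine difference is in the stability step, i.e.\ how you establish $\|\hat{\theta}_{1,n}-\theta_{1,n}^*\|\to_p 0$. The paper does not argue directly with gradients and Hessians; instead it invokes its general ``master'' Theorem~\ref{general_parametric_models}, which (i) uses the maximal inequality of Proposition~\ref{crd_maximal_inequality} to get uniform convergence of the \emph{objective values} $\hat{\L}_n(\theta)-\hat{\L}_n(\theta_n^*)$ on a ball, and then (ii) combines this with a growth condition on $\L_n$ (verified via Lemma~\ref{growth_lemma} using the same Lipschitz-Hessian calculation you sketch) to localize the minimizer. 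Your route is the more classical ``score at the truth is small, curvature is uniformly positive, so the minimizer is close'' argument, driven entirely by the completely randomized WLLN plus a finite covering to upgrade pointwise Hessian convergence to uniform. Both are valid in the triangular-array setting because the relevant Lipschitz constants and curvature lower bounds are uniform in $n$ by hypothesis. What the paper's packaging buys is reusability: Theorem~\ref{general_parametric_models} is written once and then applied verbatim to Poisson regression and the transformed-outcome example, whereas your gradient/Hessian argument would need to be rerun (with model-specific bounds on $\dot\psi$, $\ddot\psi$, etc.) for each new loss. What your route buys is self-containment and a slightly lighter toolkit---you never need the entropy-integral maximal inequality for the stability step, only Chebyshev-level WLLN statements.
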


At least qualitatively speaking, the assumptions of Theorem \ref{logistic_regression} cannot be improved (although the uniform bounds could, with additional effort, be relaxed to depend on $n$).  It is widely known that the logistic MLE fails to exist when the design matrix $\mathbf{X}$ is rank-deficient or the 1s and 0s among the outcomes can be perfectly separated by a hyperplane \citep{albert_anderson}.  If either of these properties hold in the population, then they necessarily hold in a subsample.  The existence of the population MLEs is, therefore, necessary to guarantee the existence of the \textit{sample} MLEs.  

However, it is not sufficient.  Consider for instance the following possibilities:
\begin{itemize}
\item \textit{Near-perfect separation}.\\
  If only a single exceptional point prevents the $y_{1i}$s from being perfectly separated by a hyperplane, then $\theta_{1}^*$ exists.  However, in $100(1 - p_n) \%$ of the possible realizations of the treatment assignments $(Z_1, \cdots, Z_n)$, the exceptional point will not be observed.  In those samples, $\hat{\theta}_1$ will fail to exist and $\hat{\tau}$ will not be well-defined.
  
\item \textit{Near-perfect collinearity}.\\
  If two columns of the design matrix $\mathbf{X}$ have perfect correlation except for a few exceptions, then in many realizations of the treatment assignments $(Z_1, \cdots, Z_n)$, the design matrix used to estimate $\hat{\mu}_1$ will be rank-deficient.  Again, $\hat{\theta}_1$ will fail to exist.  This might occur if one of the predictors is a very sparsely populated indicator variable.
\end{itemize}
The assumption that the Fisher information $\nabla^2 \L_n(\theta_{1,n}^*)$ is bounded away from zero is used to rule out such pathological populations.  Some indications that these assumptions are violated are (i) many fitted values very close to 0.00 or 1.00, and  (ii) ``unnaturally large" standard errors on the regression coefficients.  Chapter 6 of the textbook by Agresti \citep{agresti} has a more detailed discussion.

\subsection{Poisson regression}

Although OLS adjustment is not as obviously ``wrong" in the case of count outcomes as in the case of binary outcomes, the example presented in Section \ref{fatalities_example} shows that Poisson models can lead to substantial efficiency gains in such settings.  Theorem \ref{asymptotic_linearity} gives some sufficient conditions to justify the validity of the Poisson regression generalized Oaxaca-Blinder estimator.

\begin{theorem}
\label{poisson_regression}
\textup{\textbf{(Poisson regression)}}\\
Suppose that the potential outcomes $y_{1i}$ are nonnegative integers and $\hat{\mu}_1$ is estimated using Poisson regression.  Assume that, for all large $n$, there exists a vector $\theta_{1,n}^*$ solving the ``population" Poisson regression problem (\ref{poisson_mle}).
\begin{align}
\label{poisson_mle}
    \theta_{1,n}^* = \argmin_{\theta} \left\{ \L_n(\theta) := \frac{1}{n} \sum_{i = 1}^n [ -y_{1i} \mathbf{x}_i^{\top} \theta + \exp(\theta^{\top} \mathbf{x}_i)] \right\}
\end{align}
Let $\mathsf{MSE}_n(1) = \tfrac{1}{n} \sum_{i = 1}^n [y_{1i} - \exp(\theta_{1,n}^{* \top} \mathbf{x}_i)]^2$.  Assume that $|| \mathbf{x}_i ||, \tfrac{1}{n} \sum_{i = 1}^n y_{1i}^2$, $\mathsf{MSE}_n(1)$ are uniformly bounded and $\nabla^2 \L_n(\theta_{1,n}^*) \succeq \lambda_{\min} \mathbf{I}_{d \times d}$ for some positive constant $\lambda_{\min}$ not depending on $n$.  Then the sequence $\{ \hat{\mu}_{1,n} \}_{n \geq 1}$ satisfies the conditions of Theorem \ref{asymptotic_linearity}.
\end{theorem}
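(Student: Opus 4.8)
The plan is to verify the three per-sequence hypotheses of Theorem~\ref{asymptotic_linearity} for $\{\hat{\mu}_{1,n}\}$: prediction unbiasedness, stability, and typically simple realizations. The condition $\mathsf{MSE}_n(1)=o(n)$ is automatic, since $\mathsf{MSE}_n(1)$ is assumed uniformly bounded. Prediction unbiasedness is the easy part: Poisson regression is a canonical-link GLM, so the intercept coordinate of the first-order condition $\sum_{Z_i=1}(y_{1i}-\exp(\hat{\theta}_{1,n}^{\top}\mathbf{x}_i))\mathbf{x}_i=\mathbf{0}$ is exactly the calibration identity~(\ref{prediction_unbiasedness}), as already noted after that display. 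The substantive work is stability and typically simple realizations, and I would run both arguments in parallel with the OLS case (Examples~\ref{ols_is_stable} and~\ref{ols_is_simple}) and the logistic case (Theorem~\ref{logistic_regression}): everything reduces to the estimate $\|\hat{\theta}_{1,n}-\theta_{1,n}^*\| \xrightarrow{p} 0$ together with a local Lipschitz bound for $\theta\mapsto\exp(\theta^{\top}\mathbf{x})$, with $\mu_{1,n}^*(\mathbf{x}):=\exp(\theta_{1,n}^{*\top}\mathbf{x})$ as the population regression function.

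For stability, I would write $\hat{\theta}_{1,n}$ as the minimizer of the convex objective $\hat{\L}_{n_1}(\theta)=\tfrac{1}{n_1}\sum_{Z_i=1}[-y_{1i}\mathbf{x}_i^{\top}\theta+\exp(\theta^{\top}\mathbf{x}_i)]$. Its gradient at $\theta_{1,n}^*$ is an average over the random treated subsample of the vectors $(\exp(\theta_{1,n}^{*\top}\mathbf{x}_i)-y_{1i})\mathbf{x}_i$, whose full-population average is $\nabla\L_n(\theta_{1,n}^*)=\mathbf{0}$; since $\tfrac{1}{n}\sum_i\|(\exp(\theta_{1,n}^{*\top}\mathbf{x}_i)-y_{1i})\mathbf{x}_i\|^2\le C^2\,\mathsf{MSE}_n(1)$ is bounded (using $\|\mathbf{x}_i\|\le C$), the completely randomized law of large numbers (Lemma~\ref{wlln}) gives that this gradient is $o_p(1)$, in fact $O_p(n^{-1/2})$. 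Likewise $\nabla^2\hat{\L}_{n_1}(\theta)=\tfrac{1}{n_1}\sum_{Z_i=1}\exp(\theta^{\top}\mathbf{x}_i)\mathbf{x}_i\mathbf{x}_i^{\top}$ converges uniformly on a fixed-radius ball $\{\|\theta-\theta_{1,n}^*\|\le r\}$ to $\nabla^2\L_n(\theta)$ (a routine stochastic-equicontinuity argument, since on that ball the integrands are uniformly Lipschitz in $\theta$ with sample-averaged constants that stay bounded), and on that ball $\nabla^2\L_n(\theta)\succeq e^{-rC}\lambda_{\min}\mathbf{I}$ by the assumed Fisher information bound. Here I use the elementary fact $\tfrac{1}{n}\sum_i\exp(\theta_{1,n}^{*\top}\mathbf{x}_i)^2\le 2\,\mathsf{MSE}_n(1)+2\,\tfrac{1}{n}\sum_i y_{1i}^2$, which is uniformly bounded, so that (via $\exp(\theta^{\top}\mathbf{x}_i)\le e^{rC}\exp(\theta_{1,n}^{*\top}\mathbf{x}_i)$ and Cauchy--Schwarz) the averages $\tfrac{1}{n}\sum_i\exp(\theta^{\top}\mathbf{x}_i)\|\mathbf{x}_i\|^2$ are uniformly bounded over the ball. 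A standard convexity argument then yields, with probability tending to one, existence of $\hat{\theta}_{1,n}$ in the ball and $\|\hat{\theta}_{1,n}-\theta_{1,n}^*\|=O_p(n^{-1/2})$. Finally, on the event $\|\hat{\theta}_{1,n}-\theta_{1,n}^*\|\le 1$, the mean value theorem and $\|\mathbf{x}_i\|\le C$ give $\|\hat{\mu}_{1,n}-\mu_{1,n}^*\|_n^2\le C^2 e^{2C}\bigl(\tfrac{1}{n}\sum_i\mu_{1,n}^*(\mathbf{x}_i)^2\bigr)\|\hat{\theta}_{1,n}-\theta_{1,n}^*\|^2\xrightarrow{p}0$, so $\{\hat{\mu}_{1,n}\}$ is stable.

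For typically simple realizations, I would take $\mathcal{F}_n=\{\mathbf{x}\mapsto\exp(\theta^{\top}\mathbf{x}):\|\theta-\theta_{1,n}^*\|\le 1\}$, so that $\P_{n_1,n}(\hat{\mu}_{1,n}\in\mathcal{F}_n)\to 1$ by the stability bound. The same mean value theorem computation shows $\|\mu_\theta-\mu_\gamma\|_n\le M\|\theta-\gamma\|$ for all $\theta,\gamma$ in the unit ball around $\theta_{1,n}^*$, with $M^2=C^2 e^{2C}\sup_n\bigl(2\,\mathsf{MSE}_n(1)+2\,\tfrac{1}{n}\sum_i y_{1i}^2\bigr)<\infty$ uniform in $n$. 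Hence $\mathsf{N}(\mathcal{F}_n,\|\cdot\|_n,s)$ is at most the $(s/M)$-covering number of the Euclidean unit ball in $\R^d$, which is $\le(1+2M/s)^d$, and therefore $\int_0^1\sup_n\sqrt{\log\mathsf{N}(\mathcal{F}_n,\|\cdot\|_n,s)}\,\d s\le\int_0^1\sqrt{d\log(1+2M/s)}\,\d s\le 3dM<\infty$, exactly as in Example~\ref{ols_is_simple}. This is condition~(\ref{entropy}).

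With prediction unbiasedness, stability, typically simple realizations, and $\mathsf{MSE}_n(1)=o(n)$ all established, $\{\hat{\mu}_{1,n}\}$ meets the hypotheses of Theorem~\ref{asymptotic_linearity}. The main obstacle is triangular-array bookkeeping: the target $\theta_{1,n}^*$ and the function class $\mathcal{F}_n$ drift with $n$, $\theta_{1,n}^*$ need not be bounded uniformly in $n$, and $\exp$ is not globally Lipschitz, so one cannot cite off-the-shelf M-estimation or Donsker theorems and must instead propagate every constant uniformly in $n$ by hand. The device that makes this tractable is the bound $\tfrac{1}{n}\sum_i\exp(\theta_{1,n}^{*\top}\mathbf{x}_i)^2\le 2\,\mathsf{MSE}_n(1)+2\,\tfrac{1}{n}\sum_i y_{1i}^2$, which turns the assumed moment and MSE bounds into uniform control of the sample second moments of $\mu_{1,n}^*$ (hence of the relevant exponential averages on a neighborhood of $\theta_{1,n}^*$), while the uniform lower bound on the population Fisher information $\nabla^2\L_n(\theta_{1,n}^*)$ is what keeps the strong-convexity constant from degenerating.
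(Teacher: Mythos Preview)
Your proposal is correct and follows essentially the same route as the paper, with one packaging difference worth noting. The paper proves this theorem by invoking its general ``master'' result for convex M-estimators (Theorem~\ref{general_parametric_models}): it verifies Assumptions~\ref{convex_m_estimator}--\ref{smoothness} for the Poisson loss and the local Lipschitz property of $\theta\mapsto\mu_\theta$, which then delivers $\|\hat{\theta}_{1,n}-\theta_{1,n}^*\|\xrightarrow{p}0$, stability, and typically simple realizations all at once. You instead argue consistency directly via the gradient/Hessian route (small gradient at $\theta_{1,n}^*$ by Lemma~\ref{wlln}, plus uniform strong convexity on a ball), bypassing the general theorem. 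Both arguments rest on exactly the same two ingredients: the moment bound $\tfrac{1}{n}\sum_i\exp(\theta_{1,n}^{*\top}\mathbf{x}_i)^2\le 2\,\mathsf{MSE}_n(1)+\tfrac{2}{n}\sum_i y_{1i}^2$ (the paper uses it in verifying Assumption~\ref{smoothness} and the Lipschitz bound on $\mu_\theta$; you use it identically), and the Hessian lower bound on a neighborhood of $\theta_{1,n}^*$. Your multiplicative observation $\nabla^2\L_n(\theta)\succeq e^{-rC}\nabla^2\L_n(\theta_{1,n}^*)$ is in fact a bit cleaner than the paper's approach, which bounds $\|\nabla^2\L_n(\theta)-\nabla^2\L_n(\theta_{1,n}^*)\|_{\mathsf{op}}$ and then appeals to Lemma~\ref{growth_lemma}. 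The typically-simple-realizations step is identical in both proofs. In short: same proof, different wrapper; your direct argument avoids the overhead of Theorem~\ref{general_parametric_models}, while the paper's version illustrates that Poisson regression fits its general recipe.
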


The conditions are a bit stronger than those required for logistic regression.  This is partly due to the nature of the data;  the boundedness of $\tfrac{1}{n} \sum_{i = 1}^n y_{1i}$ and $\mathsf{MSE}_n(1)$ are automatic with binary outcomes, but needs to be assumed for count outcomes.  In any case, the assumption that $\mathsf{MSE}_n(1)$ is bounded is needed in Theorem \ref{confidence_intervals}.  The requirement that covariates are bounded is more restrictive than what was assumed in Theorem \ref{logistic_regression}, but in Poisson regression it is typical to log-transform covariates (since the inverse link is exponential).  On the log scale, boundedness is more palatable.

\subsection{Transformed-outcome regression}

Our next example covers the case of skewed outcomes.  When the distribution of outcome variables has a long right tail, linear models typically fit better after a log transformation.  This was the modeling strategy chosen in the original paper by Oaxaca \citep{oaxaca}, which studied wage data.  However, since treatment effect estimates are more interpretable on the original scale, it is not enough just to use Lin's ``interactions" estimator with a log-transformed outcome and then report the coefficient $\hat{\tau}$ as the causal effect.  Even researchers willing to assume that $(\mathbf{x}_i, y_{0i}, y_{1i})$ are i.i.d. samples from a probability distribution $\P$ and $\E[ \log (y_{ti}) | \mathbf{x}_i] = \theta_t^{\top} \mathbf{x}_i$ cannot recover the average treatment effect $\E[ y_{1i} - y_{0i}]$ from $\theta_1, \theta_0$ alone.  Additional distributional assumptions are needed to relate averages on the log scale to averages on the original scale.  

There are two ways to get around making these assumptions.  One way is to estimate the regression coefficients $\hat{\theta}_0$, $\hat{\theta}_1$ on the log scale, and then use the generalized Oaxaca-Blinder estimator based on the debiased model $\hat{\mu}_t^{\mathsf{db}}$.  
\begin{align}
    \hat{\mu}_t^{\mathsf{db}}(\mathbf{x}) &= \exp( \hat{\theta}_t^{\top} \mathbf{x}) - \frac{1}{n_t} \sum_{Z_i = t} [\exp(\hat{\theta}_t^{\top} \mathbf{x}_i) - y_{ti}]
\end{align}

The other way (which we recommend) is to use the fitted values $\exp( \hat{\theta}_t^{\top} \mathbf{x})$ as a covariate in a second-stage OLS regression, and then use the generalized Oaxaca-Blinder estimator based on $\hat{\mu}_t^{\mathsf{ols2}}$.
\begin{align}
\hat{\mu}_t^{\mathsf{ols2}}(\mathbf{x}) &= \hat{\beta}_{0,t} + \hat{\beta}_{1,t} \exp( \hat{\theta}_t^{\top} \mathbf{x})
\end{align}

Theorem \ref{log_transform} says that under assumptions similar to those used in the case of logistic and Poisson regression, either of these strategies will ``work."

\begin{theorem}
\label{log_transform}
\textup{\textbf{(OLS with log-transformed outcome)}}\\
Assume that the $|| \mathbf{x}_i ||$, $\tfrac{1}{n} \sum_{i = 1}^n y_{1i}^2$, and $\tfrac{1}{n} \sum_{i = 1}^n (\log y_{1i})^4$ are uniformly bounded.  Further suppose that $\tfrac{1}{n} \mathbf{X}^{\top} \mathbf{X} \succeq \lambda_{\min} \mathbf{I}_{d \times d}$ for some positive constant $\lambda_{\min}$ not depending on $n$.  Then both the debiased estimator $\{ \hat{\mu}_{1,n}^{\mathsf{db}} \}_{n \geq 1}$ and the second-stage OLS estimator $\{ \hat{\mu}_{1,n}^{\mathsf{ols2}} \}_{n \geq 1}$ satisfy the conditions of Theorem \ref{asymptotic_linearity}.
\end{theorem}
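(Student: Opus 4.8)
The plan is to verify, for each of the two constructions, the four ingredients required by Theorem~\ref{asymptotic_linearity}: prediction unbiasedness, the $\mathsf{MSE}$ condition of Theorem~\ref{consistency}, stability, and typically simple realizations. Prediction unbiasedness is immediate --- for $\hat{\mu}_1^{\mathsf{db}}$ it holds by construction, since the debiasing step~(\ref{debiasing}) subtracts off exactly the in-sample bias, and for $\hat{\mu}_1^{\mathsf{ols2}}$ it follows from the normal equations of the second-stage least-squares problem, exactly as noted around~(\ref{post_ols}). For the $\mathsf{MSE}$ condition I would take $\mu_{1,n}^*$ of the same functional form but with the population first-stage coefficients $\theta_{1,n}^* = (\tfrac{1}{n}\mathbf{X}^{\top}\mathbf{X})^{-1}(\tfrac{1}{n}\sum_{i=1}^n \mathbf{x}_i \log y_{1i})$, which are well defined and uniformly bounded because $\tfrac{1}{n}\mathbf{X}^{\top}\mathbf{X} \succeq \lambda_{\min}\mathbf{I}$ and $\|\mathbf{x}_i\|$, $\tfrac{1}{n}\sum_{i=1}^n(\log y_{1i})^4$ are bounded; then $\|\mathbf{x}_i\|$ bounded makes $\exp(\theta_{1,n}^{*\top}\mathbf{x}_i)$ uniformly bounded, so with $\tfrac{1}{n}\sum_{i=1}^n y_{1i}^2 = O(1)$ we get $\mathsf{MSE}_n(1) = O(1) = o(n)$. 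That $\mu_{1,n}^*$ may be chosen with $\tfrac{1}{n}\sum_{i=1}^n \epsilon_{1i}^* = 0$ is automatic, since the population debiased function and the population second-stage fit each reproduce $\tfrac{1}{n}\sum_{i=1}^n y_{1i}$ by construction.

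For stability I would first show the first-stage estimator is consistent for these population coefficients. By the completely randomized law of large numbers (Lemma~\ref{wlln}), the subsample Gram matrix $\tfrac{1}{n_1}\sum_{Z_i=1}\mathbf{x}_i\mathbf{x}_i^{\top}$ and the moment vector $\tfrac{1}{n_1}\sum_{Z_i=1}\mathbf{x}_i\log y_{1i}$ differ from their full-sample analogues by $o_p(1)$ --- the required second moments being controlled by $\|\mathbf{x}_i\| \le B$ together with Cauchy--Schwarz and $\tfrac{1}{n}\sum_{i=1}^n(\log y_{1i})^4 = O(1)$ --- and since the target Gram is $\succeq \lambda_{\min}\mathbf{I}$, a continuous-mapping argument as in Example~\ref{ols_is_stable} gives $\|\hat{\theta}_{1,n} - \theta_{1,n}^*\| \xrightarrow{p} 0$. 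Because $\|\mathbf{x}_i\| \le B$ and $\|\theta_{1,n}^*\|$ is uniformly bounded, on a high-probability event $\theta \mapsto \exp(\theta^{\top}\mathbf{x}_i)$ is Lipschitz with a constant not depending on $n$, so $\|\exp(\hat{\theta}_{1,n}^{\top}\cdot) - \exp(\theta_{1,n}^{*\top}\cdot)\|_n \le C\|\hat{\theta}_{1,n} - \theta_{1,n}^*\| \xrightarrow{p} 0$. For $\hat{\mu}_1^{\mathsf{db}}$ it then remains only to check that the bias-correction constant converges, which again follows from the law of large numbers and the previous display. For $\hat{\mu}_1^{\mathsf{ols2}}$ one additional step is needed, because the second-stage regressors $\exp(\hat{\theta}_{1,n}^{\top}\mathbf{x}_i)$ are themselves random: I would first replace them by $\exp(\theta_{1,n}^{*\top}\mathbf{x}_i)$, the induced error in the second-stage Gram and moment vector being $o_p(1)$ by the Lipschitz bound and uniform boundedness, then apply the law of large numbers to pass from the subsample to the full sample, and finally invoke continuous mapping to get $(\hat{\beta}_0,\hat{\beta}_1) \xrightarrow{p} (\beta_0^*,\beta_1^*)$ and hence $\|\hat{\mu}_{1,n}^{\mathsf{ols2}} - \mu_{1,n}^{*,\mathsf{ols2}}\|_n \xrightarrow{p} 0$.

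Typically simple realizations is where the bounded-covariate hypothesis does the real work, and the argument mirrors Example~\ref{ols_is_simple}. Since $\hat{\theta}_{1,n}$, and (for the second construction) $\hat{\beta}_0$, $\hat{\beta}_1$ and the bias-correction constant, all converge in probability, with probability tending to one $\hat{\mu}_{1,n}$ lies in a parametric family indexed by a bounded Euclidean neighborhood of the limiting parameters: $\mathcal{F}_n = \{\mathbf{x} \mapsto \exp(\theta^{\top}\mathbf{x}) - c : \|\theta - \theta_{1,n}^*\| \le 1,\ |c - c_n^*| \le 1\}$ for $\hat{\mu}_1^{\mathsf{db}}$, and $\mathcal{F}_n = \{\mathbf{x} \mapsto \beta_0 + \beta_1\exp(\theta^{\top}\mathbf{x}) : |\beta_0 - \beta_{0,n}^*| \le 1,\ |\beta_1 - \beta_{1,n}^*| \le 1,\ \|\theta - \theta_{1,n}^*\| \le 1\}$ for $\hat{\mu}_1^{\mathsf{ols2}}$. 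On such a neighborhood, boundedness of $\|\mathbf{x}_i\|$ confines $|\theta^{\top}\mathbf{x}_i|$ to a fixed compact interval, so $\exp$ is Lipschitz there with an $n$-free constant; hence the map from the (at most $(d+2)$-dimensional) parameter to the fitted function is Lipschitz from Euclidean distance to $\|\cdot\|_n$ with an $n$-free constant $M$. Thus $\mathsf{N}(\mathcal{F}_n, \|\cdot\|_n, s)$ is at most the $(s/M)$-covering number of a fixed Euclidean ball, giving $\log\mathsf{N} \lesssim d\log(1 + M/s)$ and a finite uniform entropy integral, exactly as in Example~\ref{ols_is_simple}.

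The main obstacle is the second construction. Because $\hat{\mu}_1^{\mathsf{ols2}}$ is a genuine two-step estimator, one must be careful to show the second-stage least-squares fit is well behaved --- in particular, that the population second-stage design matrix $\tfrac{1}{n}\sum_{i=1}^n (1, \exp(\theta_{1,n}^{*\top}\mathbf{x}_i))^{\top}(1, \exp(\theta_{1,n}^{*\top}\mathbf{x}_i))$ is non-degenerate (smallest eigenvalue bounded away from zero) before the continuous-mapping step can be applied, and this non-degeneracy is also what keeps $\beta_{0,n}^*$, $\beta_{1,n}^*$ uniformly bounded in the $\mathsf{MSE}$ and stability arguments. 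In the degenerate case --- $\exp(\theta_{1,n}^{*\top}\mathbf{x}_i)$ asymptotically constant, i.e. $\log y_{1i}$ asymptotically uncorrelated with the non-constant covariates --- $\hat{\mu}_{1,n}^{\mathsf{ols2}}$ collapses to the treatment-group mean and the conclusions reduce to the Neyman case, which must be handled separately. Everything else is routine bookkeeping with the completely randomized law of large numbers, and the $t = 0$ models are identical by symmetry.
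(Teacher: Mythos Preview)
Your proposal is correct and follows the same architecture as the paper: first-stage consistency $\|\hat\theta_{1,n}-\theta_{1,n}^*\|\to_p 0$, stability of the exponentiated fit, convergence of the debiasing constant (resp.\ second-stage coefficients), and a Lipschitz-parameterization argument for typically simple realizations mirroring Example~\ref{ols_is_simple}. Two differences are worth noting. First, for the first stage the paper routes through the general M-estimation result (Theorem~\ref{general_parametric_models}), checking Assumptions~\ref{convex_m_estimator}--\ref{smoothness} for the squared loss on $\log y_{1i}$; your direct closed-form-plus-LLN argument is equivalent and, for OLS, arguably cleaner. Second, for the second-stage coefficients the paper does not invoke an abstract continuous-mapping step on the $2\times 2$ design; instead it writes $\hat\beta_{1,n}$ and $\beta_{1,n}^*$ as explicit ratios and shows directly that the relevant denominator $\tfrac{1}{n}\sum_i[\exp(\theta_{1,n}^{*\top}\mathbf{x}_i)]^2$ is bounded away from zero by Jensen (its logarithm is at least $\tfrac{2}{n}\sum_i\theta_{1,n}^{*\top}\mathbf{x}_i=\tfrac{2}{n}\sum_i\log y_{1i}$, which is bounded below by the fourth-moment hypothesis). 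That device sidesteps the non-degeneracy issue you flag; your alternative---treating the degenerate case where $\exp(\theta_{1,n}^{*\top}\mathbf{x}_i)$ is asymptotically constant as a collapse to the Neyman estimator---is a legitimate way to close the same gap, and is arguably the more honest one given that the with-intercept second-stage slope has $\mathrm{Var}(\exp(\theta_{1,n}^{*\top}\mathbf{x}_i))$ in its denominator.
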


In practice, we recommend the estimator based on $\hat{\mu}_t^{\mathsf{ols2}}$ over the estimator based on $\hat{\mu}_t^{\mathsf{db}}$.  The extra degree of freedom $\hat{\beta}_{1,t}$ can make a big difference in terms model fit.  For example, when the true data-generating process is $y_{ti} = \exp( \theta_t^{\top} \mathbf{x}_i) + \epsilon_i$ where $\epsilon_i \sim \N(0, \sigma^2)$, the optimal predictor is $\beta_1 \exp( \theta_t^{\top} \mathbf{x}_i)$ where $\beta_1 = \exp(\sigma^2 / 2)$.  

We illustrate the difference using an example.  We used the dataset and experimental set-up from Example \ref{fatalities_example} to compare the two transformed-outcome methods.  The outcome in this dataset (number of traffic fatalities) is highly skewed, and linear models fit much better after a log transformation of outcomes and covariates.  Figure \ref{fig:skew_example} plots the randomization distribution of the generalized Oaxaca-Blinder estimator based on (i) OLS regression without any transformations\footnote{This is the same as Lin's ``interactions" estimator.};  (ii) the debiased estimator $\hat{\mu}_t^{\mathsf{db}}$;  (iii) the second-stage OLS estimator $\hat{\mu}_t^{\mathsf{ols2}}$.  The estimator based on $\hat{\mu}_t^{\mathsf{ols2}}$ comes out as the clear winner in terms of precision.  This is reflected in the average width of the 95\% confidence intervals, which were (i) 106 deaths;  (ii) 84 deaths;  (iii) 78 deaths.  All three intervals had approximately nominal coverage

\begin{figure}[H]
    \centering
    \includegraphics[width = 12.5cm]{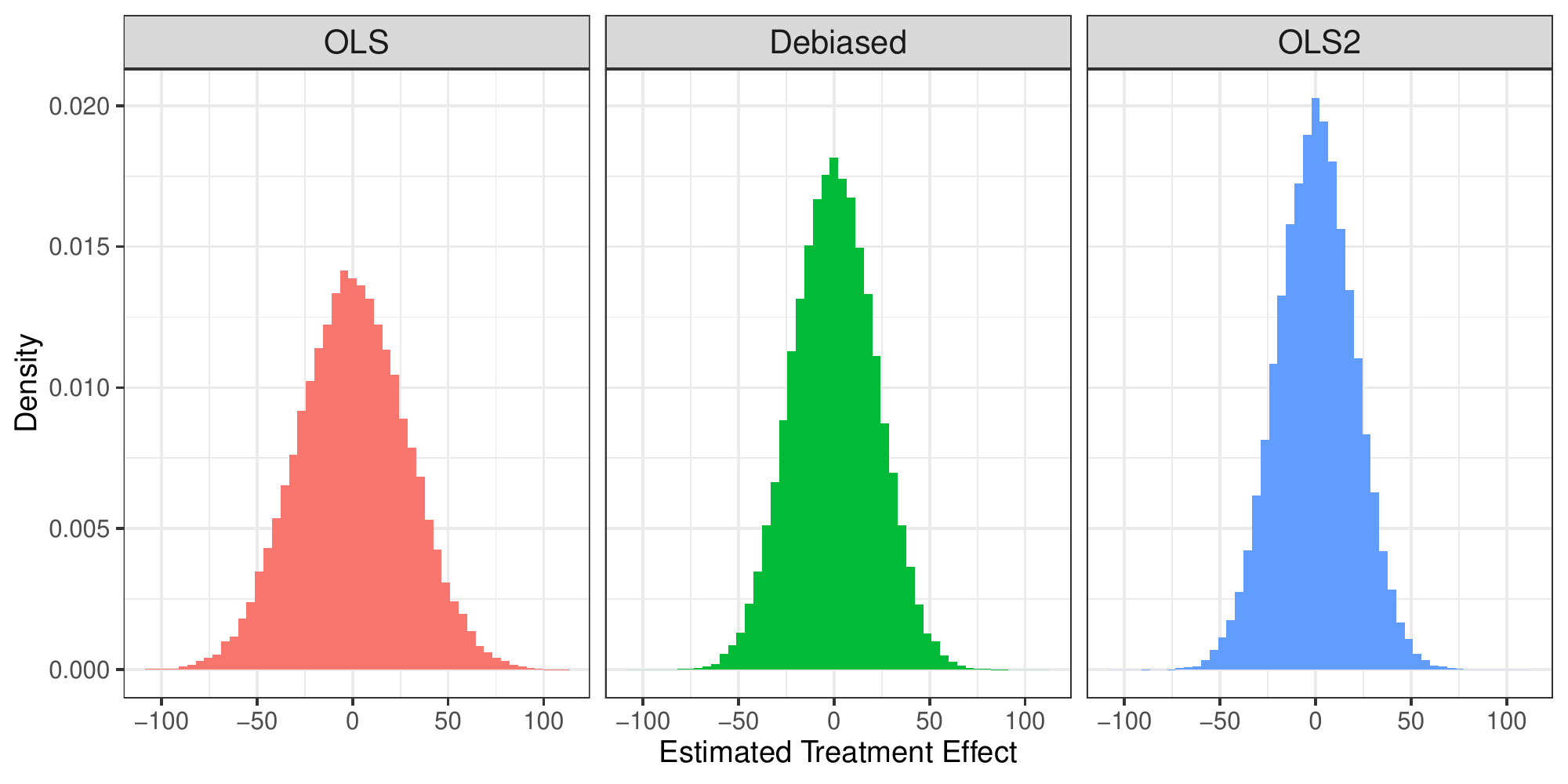}
    \caption{\textit{The randomization distribution of Lin's ``interactions" estimator (left), the generalized Oaxaca-Blinder estimator based on ``debiasing" the predictions $\exp( \hat{\theta}_t^{\top} \mathbf{x}_i)$ (center), and the generalized Oaxaca-Blinder estimator based on using the predictions $\exp( \hat{\theta}_t^{\top} \mathbf{x}_i)$ as a covariate in a linear model (right).  All models control for state population, average miles per driver, and per capita income.  The covariates are log transformed in the right two panels.}}
    \label{fig:skew_example}
\end{figure}

\subsection{Isotonic regression}

Our last example is nonparametric.  Suppose that the covariate dimension $d$ is equal to one, and the outcome $y_{1i}$ is bounded in the interval $[a, b]$.  When the relationship between $x_i$ and $y_{1i}$ is expected to be monotone increasing, \textit{isotonic regression} is a common modeling strategy.  This method finds a function $\hat{\mu}_1$ by solving:
\begin{align}
    \label{isotonic}
    \hat{\mu}_1 \in \argmin_{ \mu \in \mathsf{M}} \frac{1}{n_1} \sum_{Z_i = 1} (y_{1i} - \mu(x_i))^2
\end{align}
where $\mathsf{M} = \{ \mu : \R \rightarrow [a, b] \text{ nondecreasing} \}$.  Although the argmin in (\ref{isotonic}) is not unique, the value of $\hat{\mu}_1$ at the gridpoints is determined uniquely \citep{isotonic}.  For concreteness, we will choose the piecewise linear solution.

The theoretical properties of isotonic regression in the i.i.d. setting are well-studied (see the review paper \cite{shape_restricted}), but no previous works have studied this method from the perspective of randomization inference.  Theorem \ref{isotonic_regression} says that, under almost no assumptions, isotonic regression satisfies all of the conditions needed to use it in the generalized Oaxaca-Blinder procedure.

\begin{theorem}
\label{isotonic_regression}
\textup{\textbf{(Isotonic regression)}}\\
Let $x_i$ be a real-valued covariate, and $y_{1i} \in [a, b]$ a bounded outcome.  Let $\hat{\mu}_{1,n}$ be the piecewise-linear solution to (\ref{isotonic}).  Then the sequence $\{ \hat{\mu}_{1,n} \}_{n \geq 1}$ satisfies the conditions of Theorem \ref{asymptotic_linearity}.
\end{theorem}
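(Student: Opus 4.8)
The plan is to verify the three ingredients that Theorem~\ref{asymptotic_linearity} requires of $\{\hat{\mu}_{1,n}\}$ — prediction unbiasedness (with $\mathsf{MSE}_n(1) = o(n)$), stability, and typically simple realizations — taking as the population target the \emph{full-sample} isotonic regression $\mu_{1,n}^* \in \argmin_{\mu \in \mathsf{M}} \tfrac{1}{n}\sum_{i=1}^n (y_{1i} - \mu(x_i))^2$, which is a genuine deterministic sequence because the finite population is fixed. Prediction unbiasedness and the MSE bound are the easiest: the pool-adjacent-violators solution is constant on ``blocks'' of consecutive gridpoints, and on each block its common value is the multiplicity-weighted average of the $y_{1i}$ falling in that block, so summing over blocks gives $\tfrac{1}{n_1}\sum_{Z_i=1}\hat{\mu}_{1,n}(x_i) = \tfrac{1}{n_1}\sum_{Z_i=1} y_{1i}$. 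The same computation shows every such block average lies in $[a,b]$, so the interval constraint in~(\ref{isotonic}) is never active and $\hat{\mu}_{1,n}$ (and likewise $\mu_{1,n}^*$) is just the unconstrained monotone least-squares fit; in particular $\mathsf{MSE}_n(1) = \tfrac{1}{n}\sum_{i=1}^n (y_{1i} - \mu_{1,n}^*(x_i))^2 \le (b-a)^2 = o(n)$.

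For stability, write $L(\mu) = \tfrac{1}{n}\sum_{i=1}^n (y_{1i} - \mu(x_i))^2$ and $L_1(\mu) = \tfrac{1}{n_1}\sum_{Z_i = 1} (y_{1i} - \mu(x_i))^2$, so that $\mu_{1,n}^* = \argmin_{\mathsf{M}} L$ and $\hat{\mu}_{1,n} = \argmin_{\mathsf{M}} L_1$. Since $L(\mu)$ is the squared $\|\cdot\|_n$-distance from $\mu$ to the data and $\mathsf{M}$ restricted to the $n$ gridpoints is a (closed, finite-dimensional) convex set, the variational characterization of projection onto a convex set yields $L(\mu) \ge L(\mu_{1,n}^*) + \|\mu - \mu_{1,n}^*\|_n^2$ for every $\mu \in \mathsf{M}$. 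Taking $\mu = \hat{\mu}_{1,n}$ and comparing with $L_1$ gives
\begin{align*}
\|\hat{\mu}_{1,n} - \mu_{1,n}^*\|_n^2 &\le L(\hat{\mu}_{1,n}) - L(\mu_{1,n}^*) \\
&\le \big[L(\hat{\mu}_{1,n}) - L_1(\hat{\mu}_{1,n})\big] + \big[L_1(\hat{\mu}_{1,n}) - L_1(\mu_{1,n}^*)\big] + \big[L_1(\mu_{1,n}^*) - L(\mu_{1,n}^*)\big] \\
&\le 2 \sup_{\mu \in \mathsf{M}} |L(\mu) - L_1(\mu)|,
\end{align*}
since the middle bracket is $\le 0$ by optimality of $\hat{\mu}_{1,n}$ together with $\mu_{1,n}^* \in \mathsf{M}$. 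It therefore remains to prove the uniform law of large numbers $\sup_{\mu \in \mathsf{M}} |L(\mu) - L_1(\mu)| \xrightarrow{p} 0$; this follows because the loss $(y_{1i} - \mu(x_i))^2$ is bounded by $(b-a)^2$, the class $\mathsf{M}$ restricted to the $n$ gridpoints has $\|\cdot\|_n$-covering numbers growing no faster than $e^{C(b-a)/s}$ (the classical entropy bound for bounded monotone functions), and Hoeffding's inequality for sampling without replacement controls $L(\mu) - L_1(\mu)$ at each point of an $s$-net, while the $O(b-a)$-Lipschitz dependence of $L$ and $L_1$ on $\mu$ in $\|\cdot\|_n$ handles the discretization error.

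Finally, for typically simple realizations take $\mathcal{F}_n = \{\mu|_{\{x_{1,n},\dots,x_{n,n}\}} : \mu \in \mathsf{M}\}$, the restriction of the full bounded monotone cone to the $n$ covariate values; then $\P_{n_1,n}(\hat{\mu}_{1,n} \in \mathcal{F}_n) = 1$ because the piecewise-linear isotonic fit is itself a nondecreasing $[a,b]$-valued function. The entropy condition~(\ref{entropy}) is then exactly the bound just quoted: $\log \mathsf{N}(\mathcal{F}_n, \|\cdot\|_n, s) \le C(b-a)/s$ uniformly in $n$, whence $\int_0^1 \sup_n \sqrt{\log \mathsf{N}(\mathcal{F}_n, \|\cdot\|_n, s)}\,\d s \le \sqrt{C(b-a)}\int_0^1 s^{-1/2}\,\d s < \infty$. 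I expect the only real work to be in the stability step — specifically, setting up the uniform law of large numbers for sampling without replacement over the monotone class within the triangular-array framework — whereas prediction unbiasedness and the entropy bound are essentially classical facts about isotonic regression and monotone function classes applied verbatim.
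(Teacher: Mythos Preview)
Your proposal is correct and follows essentially the same route as the paper: prediction unbiasedness via the block-average property of PAVA, typically simple realizations via the classical $\log \mathsf{N} \lesssim 1/s$ entropy bound for bounded monotone functions, and stability via the projection inequality $\|\hat{\mu}_{1,n} - \mu_{1,n}^*\|_n^2 \le \mathcal{R}(\hat{\mu}_{1,n}) - \mathcal{R}(\mu_{1,n}^*)$ combined with a uniform law of large numbers for the squared loss over the monotone class. The only cosmetic difference is that the paper obtains the ULLN by applying its general maximal inequality (Proposition~\ref{crd_maximal_inequality}, Dudley's integral over the sub-Gaussian process $\mathbb{G}_n$) to the loss class $\{\ell_\mu : \mu \in \mathsf{M}\}$, whereas you propose the more elementary one-step discretization (Hoeffding on a net plus Lipschitz control); both give the needed $o_p(1)$.
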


Since the isotonic class $\mathsf{M}$ contains all constant functions, the (in-sample) mean-squared error from isotonic regression is never larger than the mean-squared error from a constant fit.  Therefore, the generalized Oaxaca-Blinder estimator based on two isotonic regressions shares ``non-inferiority" property of Lin's ``interactions" estimator:  the confidence interval will never be wider than Neyman's \citep{neyman} confidence interval for the difference-of-means estimator. 

That being said, we have found that using the in-sample prediction error as an estimate of $\mathsf{MSE}_n(t)$ performs poorly with isotonic regression, due to overfitting.  In a simple synthetic-data example shown in Figure \ref{fig:isotonic}, about 600 samples were needed per treatment arm before 95\% confidence intervals achieved $>$ 93\% coverage.  One explanation for why so many samples are needed is that the ``degrees of freedom" in the isotonic class (whatever that means) is too large -- see the jagged regression function in Figure \ref{fig:isotonic}.  The smoothed variants of isotonic regression that have been proposed in the literature (\cite{isotonic}) may perform better in smaller samples.

\begin{figure}[H]
    \centering
    \includegraphics[width=12.5cm]{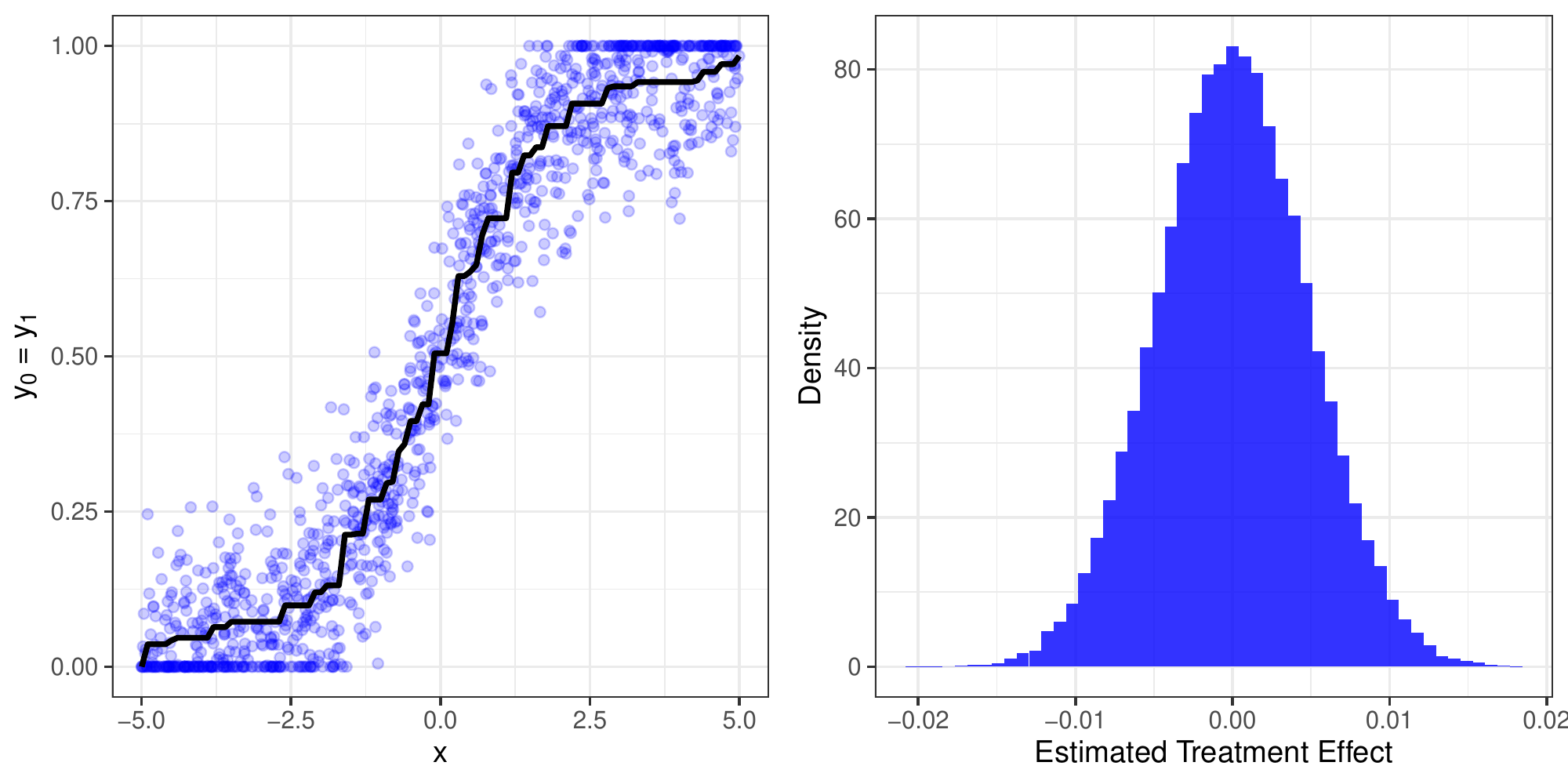}
    \caption{\textit{The ``population" isotonic regression fit (left), and the randomization distribution of the generalized Oaxaca-Blinder estimator based on two isotonic regressions (right).  There are 1200 observations in total, with $y_{0i} = y_{1i}$ for each $i$.  In each replication, half of the observations are assigned to the ``treatment" group and half of the observations are assigned to the ``control" group.}}
    \label{fig:isotonic}
\end{figure}

\section{Extension: a generic recipe for parametric models} \label{parametric_recipe}

The parametric examples presented in Section \ref{examples} can all be proved by translating standard arguments from the theory of M-estimation into the language of finite populations.  This section gives a ``master theorem" that lightens the effort of performing that translation in many cases. The result applies to parametric models where the parameter $\hat{\theta}$ is estimated by solving a convex M-estimation problem.

\begin{assumption}
\label{convex_m_estimator}
\textup{\textbf{(Convex loss function)}}\\
Assume that $\hat{\theta}_n$ is the solution to the following (random) optimization problem (\ref{m_estimation}).
\begin{align}
\label{m_estimation}
    \hat{\theta}_n \in \argmin_{\theta} \left\{ \hat{\L}_n(\theta) := \frac{1}{n_1} \sum_{Z_i = 1} \ell(\theta, \mathbf{x}_i, y_{1i}) \right\}
\end{align}
where $\ell(\theta, \mathbf{x}, y)$ is a loss function that is convex in its first argument.
\end{assumption}

The convexity condition is likely stronger than necessary. In the classical i.i.d. setting, consistency in convex M-estimation problems can be proved under essentially no assumptions \citep{niemiro1992}, and a similar phenomenon holds in the randomization setting.  In principle, our arguments (based on uniform convergence) can handle nonconvex loss functions as well, but solving the problem (\ref{m_estimation}) when the loss function $\hat{\L}_n$ is nonconvex is computationally challenging (except in highly specialized problems).  Therefore, theoretical results for nonconvex M-estimators are unlikely to have practical relevance and we have not pursued that direction.

The next assumption asks for the existence of a stable solution to the ``population" version of (\ref{m_estimation}).

\begin{assumption}
\label{growth}
\textup{\textbf{(Existence and stability of population optima)}}\\
Assume that, for all large $n$, there exists a vector $\theta_n^*$ solving the ``population" M-estimation problem (\ref{population_m_estimation}).
\begin{align}
\label{population_m_estimation}
    \theta_n^* = \argmin_{\theta} \left\{ \L_n(\theta) := \frac{1}{n} \sum_{i = 1}^n \ell(\theta, \mathbf{x}_i, y_{1i}) \right\}
\end{align}
Furthermore, suppose that there exists a strictly increasing function $f : \R_{\geq 0} \rightarrow \R_{\geq 0}$ so that $\L_n(\theta) - \L_n(\theta_n^*) \geq f (|| \theta - \theta_n^* ||)$ for all $\theta$.
\end{assumption}

The assumption that $\L_n(\theta) - \L_n(\theta_n^*) \geq f(|| \theta - \theta_n^* ||)$ guarantees the minimizer $\theta_n^*$ is unique and rules out population sequences where the objective function becomes flatter and flatter as $n \rightarrow \infty$.  A useful tool for verifying this assumption is the following Lemma.

\begin{lemma}
\label{growth_lemma}
\textup{\textbf{(Curvatures implies growth)}}\\
Suppose that the map $\theta \mapsto \ell(\theta, \mathbf{x}, y)$ is twice continuously differentiable for every $\mathbf{x}, y$ and the minimizer $\theta_n^*$ occurs at a point where $\nabla \L_n(\theta_n^*) = 0$.  Assume that for some radius $r > 0$ and some constant $\lambda_{\min} > 0$ not depending on $n$, we have $\nabla^2 \L_n(\theta) \succeq \lambda_{\min} \mathbf{I}_{d \times d}$ for all $\theta \in \mathbb{B}_r(\theta_n^*)$.  Then Assumption \ref{growth} is satisfied.
\end{lemma}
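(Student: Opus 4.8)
The plan is to reduce the claim to a single local second-order Taylor expansion around $\theta_n^*$ and then propagate the resulting bound to all of $\R^d$ using convexity. Throughout I use that, in the setting of this section (Assumption \ref{convex_m_estimator}), the population objective $\L_n$ is convex, being an average of the convex maps $\theta \mapsto \ell(\theta, \mathbf{x}_i, y_{1i})$.

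First I would handle points inside the ball. Fix $\theta$ with $\|\theta - \theta_n^*\| \leq r$. Since $\mathbb{B}_r(\theta_n^*)$ is convex, the entire segment $\{\theta_n^* + s(\theta - \theta_n^*) : s \in [0,1]\}$ lies in it. The exact second-order Taylor formula with integral remainder, together with $\nabla \L_n(\theta_n^*) = 0$ and the hypothesis $\nabla^2 \L_n \succeq \lambda_{\min} \mathbf{I}_{d \times d}$ on the ball, then gives $\L_n(\theta) - \L_n(\theta_n^*) \geq \tfrac{\lambda_{\min}}{2} \| \theta - \theta_n^* \|^2$ (using $\int_0^1 (1-s)\,ds = \tfrac12$).

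Next I would extend this bound past the ball by a rescaling argument. Given $\theta$ with $u := \|\theta - \theta_n^*\| > r$, let $\theta' := \theta_n^* + (r/u)(\theta - \theta_n^*)$ be the point where the segment from $\theta_n^*$ to $\theta$ meets the boundary sphere, so that $\theta' = (1 - r/u)\theta_n^* + (r/u)\theta$. Convexity of $\L_n$ yields $\L_n(\theta') \leq (1 - r/u)\L_n(\theta_n^*) + (r/u)\L_n(\theta)$, i.e. $\L_n(\theta) - \L_n(\theta_n^*) \geq (u/r)\bigl(\L_n(\theta') - \L_n(\theta_n^*)\bigr)$; applying the inside-the-ball bound to $\theta'$ (which has $\|\theta' - \theta_n^*\| = r$) gives $\L_n(\theta) - \L_n(\theta_n^*) \geq (u/r)\cdot\tfrac{\lambda_{\min}}{2}r^2 = \tfrac{\lambda_{\min} r}{2}\,u$. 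I would then set $f(u) := \tfrac{\lambda_{\min}}{2}u^2$ for $0 \leq u \leq r$ and $f(u) := \tfrac{\lambda_{\min} r}{2}\,u$ for $u > r$. The two formulas agree at $u = r$, so $f$ is continuous, strictly increasing, and maps $\R_{\geq 0}$ into $\R_{\geq 0}$; by the two estimates above it satisfies $\L_n(\theta) - \L_n(\theta_n^*) \geq f(\|\theta - \theta_n^*\|)$ for all $\theta$. Crucially $f$ does not depend on $n$, since $\lambda_{\min}$ and $r$ are $n$-free by hypothesis, so the same $f$ works for all large $n$ and Assumption \ref{growth} holds.

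The only genuine subtlety is the behavior of $\L_n$ far from $\theta_n^*$: the local curvature bound by itself says nothing there, and without convexity the statement would be false, since a locally strongly convex function can flatten out or even decrease away from a critical point. Convexity is exactly what turns the local quadratic lower bound into a global one with a linear tail, via the boundary-point comparison above. The remaining ingredients — the Taylor remainder estimate and the verification that the pieced-together $f$ is strictly increasing — are routine.
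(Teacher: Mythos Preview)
Your proof is correct and follows essentially the same approach as the paper: a second-order Taylor expansion gives the quadratic lower bound $\tfrac{1}{2}\lambda_{\min}\|\theta-\theta_n^*\|^2$ inside $\mathbb{B}_r(\theta_n^*)$, and convexity extends it to the linear lower bound $\tfrac{1}{2}\lambda_{\min} r\,\|\theta-\theta_n^*\|$ outside, yielding the same piecewise $f$. The only cosmetic differences are that you use the integral remainder rather than the Lagrange form and that you spell out the convexity extension explicitly, whereas the paper simply says ``the extension to $t \geq r$ was by convexity.''
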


The final assumption needed for the convergence of $\hat{\theta}_n$ to $\theta_n^*$ is that $\L_n$ is smooth near its minimum.

\begin{assumption}
\label{smoothness}
\textup{\textbf{(Smooth loss function)}}\\
Assume that there exists a radius $r > 0$ and a constant $L < \infty$ (not depending on $n$) such that (\ref{local_smoothness}) holds for all $\theta, \phi \in \mathbb{B}_r(\theta_n^*)$.
\begin{align}
\label{local_smoothness}
\left( \frac{1}{n} \sum_{i = 1}^n [\ell(\theta, \mathbf{x}_i, y_{1i}) - \ell(\phi, \mathbf{x}_i, y_{1i})]^2 \right)^{1/2} \leq L || \theta - \phi ||
\end{align}
\end{assumption}

Under the above assumptions, $|| \hat{\theta}_n - \theta_n^* ||$ tends to zero in probability.  If the map $\theta \mapsto \mu_{\theta}$ is also smooth near $\theta_n^*$, then that implies stability and typically simple realizations.

\begin{theorem}
\label{general_parametric_models}
\textup{\textbf{(General results for parametric models)}}\\
Assume that \ref{convex_m_estimator}, \ref{growth}, and \ref{smoothness} are satisfied.  Then $|| \hat{\theta}_n - \theta_n^* || \xrightarrow{p} 0$.  If, in addition, $|| \mu_{\theta} - \mu_{\phi} ||_n \leq M || \theta - \phi ||$ for all $\theta, \phi \in \mathbb{B}_r(\theta_n^*)$, then the sequence $\{ \mu_{\hat{\theta}_n} \}_{n \geq 1}$ is stable and has typically simple realizations.
\end{theorem}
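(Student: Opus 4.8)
The plan is to prove the first conclusion, $\|\hat{\theta}_n - \theta_n^*\| \xrightarrow{p} 0$, by a localization argument that uses convexity (Assumption \ref{convex_m_estimator}) to reduce a potentially global problem to a small ball around $\theta_n^*$, and then to read off stability and typically simple realizations from the Lipschitz hypothesis $\|\mu_\theta - \mu_\phi\|_n \leq M\|\theta - \phi\|$ almost exactly as in Examples \ref{ols_is_stable} and \ref{ols_is_simple}. For consistency it suffices to show $\P(\|\hat{\theta}_n - \theta_n^*\| > \epsilon) \to 0$ for every $\epsilon \in (0, r]$, the case $\epsilon > r$ being a fortiori. Fix such an $\epsilon$. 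Since $f$ is strictly increasing, $f(\epsilon) > f(0) \geq 0$ is a constant not depending on $n$, and Assumption \ref{growth} gives $\L_n(\theta) - \L_n(\theta_n^*) \geq f(\epsilon)$ for every $\theta$ on the sphere $\|\theta - \theta_n^*\| = \epsilon$. Decompose
\[
\hat{\L}_n(\theta) - \hat{\L}_n(\theta_n^*) = \bigl[\L_n(\theta) - \L_n(\theta_n^*)\bigr] + G_n(\theta), \qquad G_n(\theta) := \frac{1}{n_1}\sum_{i=1}^n (Z_i - p_n)\bigl[\ell(\theta, \mathbf{x}_i, y_{1i}) - \ell(\theta_n^*, \mathbf{x}_i, y_{1i})\bigr],
\]
where the identity uses $\tfrac{1}{n_1}\sum_i Z_i a_i - \tfrac1n\sum_i a_i = \tfrac{1}{n_1}\sum_i(Z_i - p_n)a_i$. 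I would then show $\sup_{\theta \in \mathbb{B}_\epsilon(\theta_n^*)} |G_n(\theta)| \xrightarrow{p} 0$. On the event $\{\sup_{\mathbb{B}_\epsilon}|G_n| < f(\epsilon)\}$ we get $\hat{\L}_n(\theta) > \hat{\L}_n(\theta_n^*)$ for every $\theta$ on the sphere; since $\hat{\L}_n$ is convex, a minimizer $\hat{\theta}_n$ with $\|\hat{\theta}_n - \theta_n^*\| > \epsilon$ would, along the segment from $\theta_n^*$ to $\hat{\theta}_n$, force the crossing point on the sphere to have value $\leq \hat{\L}_n(\theta_n^*)$ — a contradiction. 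Hence $\|\hat{\theta}_n - \theta_n^*\| \leq \epsilon$ with probability tending to one.

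To control $G_n$ uniformly on $\mathbb{B}_\epsilon(\theta_n^*) \subseteq \mathbb{B}_r(\theta_n^*)$, note $\E[G_n(\theta)] = 0$ and, for $\theta, \phi$ in this ball, Cauchy--Schwarz together with Assumption \ref{smoothness} gives $|G_n(\theta) - G_n(\phi)| \leq \tfrac{1}{n_1}\sum_i|\ell(\theta,\cdot) - \ell(\phi,\cdot)| \leq p_{\min}^{-1}\bigl(\tfrac1n\sum_i[\ell(\theta,\cdot) - \ell(\phi,\cdot)]^2\bigr)^{1/2} \leq (L/p_{\min})\|\theta - \phi\|$, so $G_n$ is Lipschitz with an $n$-free constant. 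Cover $\mathbb{B}_\epsilon(\theta_n^*)$ by at most $(1 + 2\epsilon/\delta)^d$ Euclidean balls of radius $\delta$, with centers $\theta^{(1)}, \dots, \theta^{(N)}$. At each center, the sampling-without-replacement variance formula and the bound $\tfrac1n\sum_i[\ell(\theta^{(j)},\cdot) - \ell(\theta_n^*,\cdot)]^2 \leq L^2\epsilon^2$ (Assumption \ref{smoothness}) give $\Var(G_n(\theta^{(j)})) = O(1/n)$, so $G_n(\theta^{(j)}) \xrightarrow{p} 0$ by the completely randomized law of large numbers (Lemma \ref{wlln}); since $N$ does not depend on $n$, $\max_j |G_n(\theta^{(j)})| \xrightarrow{p} 0$, and therefore $\sup_{\mathbb{B}_\epsilon}|G_n| \leq \max_j|G_n(\theta^{(j)})| + L\delta/p_{\min}$. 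Letting $n \to \infty$ and then $\delta \downarrow 0$ finishes the uniform bound, and with it the first conclusion.

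For the second conclusion, set $\mu_n^* := \mu_{\theta_n^*}$. On the event $\{\|\hat{\theta}_n - \theta_n^*\| \leq r\}$, the extra hypothesis gives $\|\mu_{\hat{\theta}_n} - \mu_n^*\|_n \leq M\|\hat{\theta}_n - \theta_n^*\|$, which is $o_p(1)$ by the first part; since that event has probability $\to 1$, $\{\mu_{\hat{\theta}_n}\}_{n\geq 1}$ is stable. For typically simple realizations, take $\mathcal{F}_n := \{\mu_\theta : \|\theta - \theta_n^*\| \leq r\}$, so that $\P(\hat{\mu}_n \in \mathcal{F}_n) \to 1$ by consistency; exactly as in Example \ref{ols_is_simple}, an $(s/M)$-net of the Euclidean ball $\mathbb{B}_r(\theta_n^*)$ induces an $s$-cover of $\mathcal{F}_n$ in $\|\cdot\|_n$, so $\mathsf{N}(\mathcal{F}_n, \|\cdot\|_n, s) \leq (1 + 2rM/s)^d$ uniformly in $n$ and $\int_0^1 \sup_n \sqrt{\log \mathsf{N}(\mathcal{F}_n, \|\cdot\|_n, s)}\,\d s \leq \int_0^1 \sqrt{d\log(1 + 2rM/s)}\,\d s < \infty$.

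The main obstacle is the localization step in the first two paragraphs: combining convexity of $\hat{\L}_n$ with the $n$-uniform growth constant $f(\epsilon)$ to pin the a priori unbounded minimizer $\hat{\theta}_n$ into $\mathbb{B}_\epsilon(\theta_n^*)$, and then controlling the empirical-process remainder $G_n$ on that ball. Once the problem is localized, the covering/variance bookkeeping is routine and essentially identical to Example \ref{ols_is_simple}; the deduction of stability and typically simple realizations in the last paragraph is then immediate.
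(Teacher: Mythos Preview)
Your proof is correct and follows the same overall architecture as the paper's: localize via convexity by showing $\hat{\L}_n(\theta) - \hat{\L}_n(\theta_n^*) > 0$ on the sphere $\|\theta - \theta_n^*\| = \epsilon$, then read off stability and typically simple realizations from the Lipschitz hypothesis exactly as in Examples~\ref{ols_is_stable} and~\ref{ols_is_simple}. The one substantive difference is in how you establish the uniform bound $\sup_{\mathbb{B}_\epsilon(\theta_n^*)}|G_n(\theta)| \to_p 0$. The paper invokes its maximal inequality (Proposition~\ref{crd_maximal_inequality}, a Dudley entropy-integral bound built on a Hoeffding-type tail inequality) to get $\E[\sup|G_n|] \leq M(\kappa)/\sqrt{n}$; you instead exploit that $G_n$ is \emph{deterministically} $(L/p_{\min})$-Lipschitz in $\theta$, cover the ball by a finite $\delta$-net whose size is independent of $n$, and control each net point by Chebyshev using the $O(1/n)$ variance from sampling without replacement. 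Your route is more elementary --- it needs only Lemma~\ref{wlln} and avoids the sub-Gaussian concentration machinery --- at the cost of not delivering a rate, which the theorem does not require anyway. The paper's route, on the other hand, reuses exactly the same tool (Proposition~\ref{crd_maximal_inequality}) that drives Theorem~\ref{asymptotic_linearity}, so it is more unified with the rest of the appendix.
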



\section{Conclusion}

In this paper, we introduced an intuitive approach to performing covariate 
adjustment in randomized experiments. It can be summarized in a single sentence:  ``fill in the missing outcomes with an unbiased prediction model."  From a theoretical perspective, our main idea is that a little randomization goes a long way.  As long as treatment assignments are randomized, then tools from empirical process theory can be applied even if all other quantities are nonrandom.

Many open questions remain.  Perhaps the most important one is whether the Donsker-type ``typically simple realizations" assumption can be removed.  If one is willing to assume that $(\mathbf{x}_i, y_{0i}, y_{1i}, Z_i)$ are i.i.d. samples from a larger population, clever use of sample splitting can circumvent these assumptions \citep{double_machine_learning, wager_taylor_tibshirani_du}.  Are similar results available in Neyman's finite-population model?  Recent work \citep{loop} suggests that the answer might be ``yes'', but establishing the stability condition for these complex models remains challenging. We consider this to be an exciting direction for future work.

\newpage
\bibliographystyle{chicago}
\bibliography{biblio.bib}

\begin{thebibliography}{}

\bibitem[\protect\citeauthoryear{Abadie, Athey, Imbens, and Wooldridge}{Abadie
  et~al.}{2020}]{abadie_etal_2020}
Abadie, A., S.~Athey, G.~W. Imbens, and J.~M. Wooldridge (2020).
\newblock Sampling-based versus design-based uncertainty in regression
  analysis.
\newblock {\em Econometrica\/}~{\em 88\/}(1), 265--296.

\bibitem[\protect\citeauthoryear{Agresti}{Agresti}{1990}]{agresti}
Agresti, A. (1990).
\newblock {\em Categorical data analysis}.
\newblock A Wiley-Interscience publication. New York [u.a.]: Wiley.

\bibitem[\protect\citeauthoryear{Albert and Anderson}{Albert and
  Anderson}{1984}]{albert_anderson}
Albert, A. and J.~A. Anderson (1984).
\newblock On the existence of maximum likelihood estimates in logistic
  regression models.
\newblock {\em Biometrika\/}~{\em 71\/}(1), 1--10.

\bibitem[\protect\citeauthoryear{Barlow and Brunk}{Barlow and
  Brunk}{1972}]{isotonic}
Barlow, R.~E. and H.~D. Brunk (1972).
\newblock The isotonic regression problem and its dual.
\newblock {\em Journal of the American Statistical Association\/}~{\em
  67\/}(337), 140--147.

\bibitem[\protect\citeauthoryear{Bartlett}{Bartlett}{2018}]{bartlett_2018}
Bartlett, J.~W. (2018).
\newblock Covariate adjustment and estimation of mean response in randomised
  trials.
\newblock {\em Pharmaceutical Statistics\/}~{\em 17\/}(5), 648--666.

\bibitem[\protect\citeauthoryear{Bauer and Sinning}{Bauer and
  Sinning}{2008}]{bauer2008extension}
Bauer, T.~K. and M.~Sinning (2008).
\newblock An extension of the blinder--oaxaca decomposition to nonlinear
  models.
\newblock {\em AStA Advances in Statistical Analysis\/}~{\em 92\/}(2),
  197--206.

\bibitem[\protect\citeauthoryear{Belson}{Belson}{1956}]{belsen}
Belson, W.~A. (1956).
\newblock A technique for studying the effects of a television broadcast.
\newblock {\em Journal of the Royal Statistical Society. Series C (Applied
  Statistics)\/}~{\em 5\/}(3), 195--202.

\bibitem[\protect\citeauthoryear{Blinder}{Blinder}{1973}]{blinder}
Blinder, A.~S. (1973).
\newblock Wage discrimination: Reduced form and structural estimates.
\newblock {\em The Journal of Human Resources\/}~{\em 8\/}(4), 436--455.

\bibitem[\protect\citeauthoryear{Bloniarz, Liu, Zhang, Sekhon, and Yu}{Bloniarz
  et~al.}{2016}]{bloniarz}
Bloniarz, A., H.~Liu, C.-H. Zhang, J.~S. Sekhon, and B.~Yu (2016).
\newblock Lasso adjustments of treatment effect estimates in randomized
  experiments.
\newblock {\em Proceedings of the National Academy of Sciences\/}~{\em
  113\/}(27), 7383--7390.

\bibitem[\protect\citeauthoryear{Breidt and Opsomer}{Breidt and
  Opsomer}{2017}]{breidt2017}
Breidt, F.~J. and J.~D. Opsomer (2017, 05).
\newblock Model-assisted survey estimation with modern prediction techniques.
\newblock {\em Statist. Sci.\/}~{\em 32\/}(2), 190--205.

\bibitem[\protect\citeauthoryear{Cao, Tsiatis, and Davidian}{Cao
  et~al.}{2009}]{cao2009}
Cao, W., A.~A. Tsiatis, and M.~Davidian (2009, 08).
\newblock {Improving efficiency and robustness of the doubly robust estimator
  for a population mean with incomplete data}.
\newblock {\em Biometrika\/}~{\em 96\/}(3), 723--734.

\bibitem[\protect\citeauthoryear{Cassel, Sarndal, and Wretman}{Cassel
  et~al.}{1976}]{cassel_et_al_1976}
Cassel, C.~M., C.~E. Sarndal, and J.~H. Wretman (1976).
\newblock Some results on generalized difference estimation and generalized
  regression estimation for finite populations.
\newblock {\em Biometrika\/}~{\em 63\/}(3), 615--620.

\bibitem[\protect\citeauthoryear{Chernozhukov, Chetverikov, Demirer, Duflo,
  Hansen, Newey, and Robins}{Chernozhukov
  et~al.}{2018}]{double_machine_learning}
Chernozhukov, V., D.~Chetverikov, M.~Demirer, E.~Duflo, C.~Hansen, W.~Newey,
  and J.~Robins (2018).
\newblock Double/debiased machine learning for treatment and structural
  parameters.
\newblock {\em The Econometrics Journal\/}~{\em 21\/}(1), C1--C68.

\bibitem[\protect\citeauthoryear{Ding and Li}{Ding and Li}{2018}]{ding2018}
Ding, P. and F.~Li (2018, 05).
\newblock Causal inference: A missing data perspective.
\newblock {\em Statist. Sci.\/}~{\em 33\/}(2), 214--237.

\bibitem[\protect\citeauthoryear{Fairlie}{Fairlie}{1999}]{fairlie1999}
Fairlie, R.~W. (1999).
\newblock The absence of the african‐american owned business: An analysis of
  the dynamics of self‐employment.
\newblock {\em Journal of Labor Economics\/}~{\em 17\/}(1), 80--108.

\bibitem[\protect\citeauthoryear{Firth and Bennett}{Firth and
  Bennett}{1998}]{firth_bennett}
Firth, D. and K.~E. Bennett (1998).
\newblock Robust models in probability sampling.
\newblock {\em Journal of the Royal Statistical Society: Series B (Statistical
  Methodology)\/}~{\em 60\/}(1), 3--21.

\bibitem[\protect\citeauthoryear{Fogarty}{Fogarty}{2018}]{fogarty_paired_experiments}
Fogarty, C.~B. (2018, 06).
\newblock {Regression-assisted inference for the average treatment effect in
  paired experiments}.
\newblock {\em Biometrika\/}~{\em 105\/}(4), 994--1000.

\bibitem[\protect\citeauthoryear{Freedman}{Freedman}{2008a}]{freedman_regression}
Freedman, D. (2008a, 02).
\newblock On regression adjustments to experimental data.
\newblock {\em Advances in Applied Mathematics\/}~{\em 40}, 180--193.

\bibitem[\protect\citeauthoryear{Freedman}{Freedman}{2008b}]{freedman2008}
Freedman, D.~A. (2008b, 03).
\newblock On regression adjustments in experiments with several treatments.
\newblock {\em Ann. Appl. Stat.\/}~{\em 2\/}(1), 176--196.

\bibitem[\protect\citeauthoryear{Freedman}{Freedman}{2008c}]{freedman_logit}
Freedman, D.~A. (2008c, 05).
\newblock Randomization does not justify logistic regression.
\newblock {\em Statist. Sci.\/}~{\em 23\/}(2), 237--249.

\bibitem[\protect\citeauthoryear{Glynn and Quinn}{Glynn and
  Quinn}{2010}]{intro_to_aipw}
Glynn, A.~N. and K.~M. Quinn (2010).
\newblock An introduction to the augmented inverse propensity weighted
  estimator.
\newblock {\em Political Analysis\/}~{\em 18\/}(1), 36–56.

\bibitem[\protect\citeauthoryear{Guntuboyina and Sen}{Guntuboyina and
  Sen}{2018}]{shape_restricted}
Guntuboyina, A. and B.~Sen (2018, 11).
\newblock Nonparametric shape-restricted regression.
\newblock {\em Statist. Sci.\/}~{\em 33\/}(4), 568--594.

\bibitem[\protect\citeauthoryear{Hansen and Bowers}{Hansen and
  Bowers}{2009}]{hansen_bowers}
Hansen, B.~B. and J.~Bowers (2009).
\newblock Attributing effects to a cluster-randomized get-out-the-vote
  campaign.
\newblock {\em Journal of the American Statistical Association\/}~{\em
  104\/}(487), 873--885.

\bibitem[\protect\citeauthoryear{Imbens and Rubin}{Imbens and
  Rubin}{2015}]{imbens_rubin}
Imbens, G.~W. and D.~B. Rubin (2015).
\newblock {\em Causal Inference for Statistics, Social, and Biomedical
  Sciences: An Introduction}.
\newblock USA: Cambridge University Press.

\bibitem[\protect\citeauthoryear{Kang and Schafer}{Kang and
  Schafer}{2007}]{kang2007}
Kang, J. D.~Y. and J.~L. Schafer (2007, 11).
\newblock Demystifying double robustness: A comparison of alternative
  strategies for estimating a population mean from incomplete data.
\newblock {\em Statist. Sci.\/}~{\em 22\/}(4), 523--539.

\bibitem[\protect\citeauthoryear{Kline}{Kline}{2011}]{kline}
Kline, P. (2011, May).
\newblock Oaxaca-blinder as a reweighting estimator.
\newblock {\em American Economic Review\/}~{\em 101\/}(3), 532--37.

\bibitem[\protect\citeauthoryear{Kosorok}{Kosorok}{2008}]{kosorok2008introduction}
Kosorok, M. (2008).
\newblock {\em Introduction to empirical processes and semiparametric
  inference}.
\newblock Springer series in statistics. Springer.

\bibitem[\protect\citeauthoryear{Li and Ding}{Li and Ding}{2017}]{li_and_ding}
Li, X. and P.~Ding (2017).
\newblock General forms of finite population central limit theorems with
  applications to causal inference.
\newblock {\em Journal of the American Statistical Association\/}~{\em
  112\/}(520), 1759--1769.

\bibitem[\protect\citeauthoryear{Li and Ding}{Li and
  Ding}{2020}]{li2020rerandomization}
Li, X. and P.~Ding (2020).
\newblock Rerandomization and regression adjustment.
\newblock {\em Journal of the Royal Statistical Society: Series B (Statistical
  Methodology)\/}.

\bibitem[\protect\citeauthoryear{Lin}{Lin}{2013}]{lin2013}
Lin, W. (2013, 03).
\newblock Agnostic notes on regression adjustments to experimental data:
  Reexamining freedman’s critique.
\newblock {\em Ann. Appl. Stat.\/}~{\em 7\/}(1), 295--318.

\bibitem[\protect\citeauthoryear{Liu and Yang}{Liu and
  Yang}{2019}]{liu_stratified}
Liu, H. and Y.~Yang (2019).
\newblock Regression-adjusted average treatment effect estimates in stratified
  randomized experiments.

\bibitem[\protect\citeauthoryear{Neyman}{Neyman}{1923}]{neyman}
Neyman, J. (1923, 11).
\newblock On the application of probability theory to agricultural experiments.
  essay on principles. section 9.
\newblock {\em Statist. Sci.\/}~{\em 5\/}(4), 465--472.

\bibitem[\protect\citeauthoryear{Niemiro}{Niemiro}{1992}]{niemiro1992}
Niemiro, W. (1992, 09).
\newblock Asymptotics for $m$-estimators defined by convex minimization.
\newblock {\em Ann. Statist.\/}~{\em 20\/}(3), 1514--1533.

\bibitem[\protect\citeauthoryear{Oaxaca}{Oaxaca}{1973}]{oaxaca}
Oaxaca, R. (1973).
\newblock Male-female wage differentials in urban labor markets.
\newblock {\em International Economic Review\/}~{\em 14\/}(3), 693--709.

\bibitem[\protect\citeauthoryear{Olsen, Orr, Bell, and Stuart}{Olsen
  et~al.}{2013}]{external_validity}
Olsen, R., L.~Orr, S.~Bell, and E.~Stuart (2013, 12).
\newblock External validity in policy evaluations that choose sites
  purposively.
\newblock {\em Journal of Policy Analysis and Management\/}~{\em 32\/}(1),
  107--121.

\bibitem[\protect\citeauthoryear{Peters}{Peters}{1941}]{peters}
Peters, C.~C. (1941).
\newblock A method of matching groups for experiment with no loss of
  population.
\newblock {\em The Journal of Educational Research\/}~{\em 34\/}(8), 606--612.

\bibitem[\protect\citeauthoryear{Robins, Rotnitzky, and Zhao}{Robins
  et~al.}{1994}]{robins1994}
Robins, J.~M., A.~Rotnitzky, and L.~P. Zhao (1994).
\newblock Estimation of regression coefficients when some regressors are not
  always observed.
\newblock {\em Journal of the American Statistical Association\/}~{\em
  89\/}(427), 846--866.

\bibitem[\protect\citeauthoryear{Rosenbaum}{Rosenbaum}{2002}]{rosenbaum2002}
Rosenbaum, P.~R. (2002, 08).
\newblock Covariance adjustment in randomized experiments and observational
  studies.
\newblock {\em Statist. Sci.\/}~{\em 17\/}(3), 286--327.

\bibitem[\protect\citeauthoryear{Rosenberger, Uschner, and Wang}{Rosenberger
  et~al.}{2019}]{randomization_forgotten}
Rosenberger, W.~F., D.~Uschner, and Y.~Wang (2019).
\newblock Randomization: The forgotten component of the randomized clinical
  trial.
\newblock {\em Statistics in medicine\/}~{\em 38 1}, 1--12.

\bibitem[\protect\citeauthoryear{Rosenblum and van~der Laan}{Rosenblum and
  van~der Laan}{2009}]{rosenblum_vdl}
Rosenblum, M. and M.~J. van~der Laan (2009).
\newblock Consultant's forum: Using regression models to analyze randomized
  trials: Asymptotically valid hypothesis tests despite incorrectly specified
  models.
\newblock {\em Biometrics\/}~{\em 65\/}(3), 937--945.

\bibitem[\protect\citeauthoryear{Rubin}{Rubin}{1974}]{rubin1974}
Rubin, D. (1974).
\newblock Estimating causal effects of treatments in randomized and
  nonrandomized studies.
\newblock {\em Journal of Educational Psychology\/}~{\em 66\/}(5), 688--701.

\bibitem[\protect\citeauthoryear{Ruhm}{Ruhm}{1995}]{fatalities}
Ruhm, C.~J. (1995, July).
\newblock Alcohol policies and highway vehicle fatalities.
\newblock Working Paper 5195, National Bureau of Economic Research.

\bibitem[\protect\citeauthoryear{Särndal and Wright}{Särndal and
  Wright}{1984}]{sarndal_wright1984}
Särndal, C.-E. and R.~L. Wright (1984).
\newblock Cosmetic form of estimators in survey sampling.
\newblock {\em Scandinavian Journal of Statistics\/}~{\em 11\/}(3), 146--156.

\bibitem[\protect\citeauthoryear{van~der Vaart and Wellner}{van~der Vaart and
  Wellner}{1996}]{vdv_wellner}
van~der Vaart, A. and J.~Wellner (1996).
\newblock {\em Weak Convergence and Empirical Processes: With Applications to
  Statistics}.
\newblock Springer Series in Statistics. Springer.

\bibitem[\protect\citeauthoryear{Vershynin}{Vershynin}{2018}]{vershynin_2018}
Vershynin, R. (2018).
\newblock {\em High-Dimensional Probability: An Introduction with Applications
  in Data Science}.
\newblock Cambridge Series in Statistical and Probabilistic Mathematics.
  Cambridge University Press.

\bibitem[\protect\citeauthoryear{Wager, Du, Taylor, and Tibshirani}{Wager
  et~al.}{2016}]{wager_taylor_tibshirani_du}
Wager, S., W.~Du, J.~Taylor, and R.~J. Tibshirani (2016).
\newblock High-dimensional regression adjustments in randomized experiments.
\newblock {\em Proceedings of the National Academy of Sciences\/}~{\em
  113\/}(45), 12673--12678.

\bibitem[\protect\citeauthoryear{Westreich, Edwards, Cole, Platt, Mumford, and
  Schisterman}{Westreich et~al.}{2015}]{westreich_et_al}
Westreich, D., J.~Edwards, S.~Cole, R.~Platt, S.~Mumford, and E.~Schisterman
  (2015, 07).
\newblock Imputation approaches for potential outcomes in causal inference.
\newblock {\em International journal of epidemiology\/}~{\em 44}.

\bibitem[\protect\citeauthoryear{Wu and Sitter}{Wu and
  Sitter}{2001}]{wu_sitter_2001}
Wu, C. and R.~R. Sitter (2001).
\newblock A model-calibration approach to using complete auxiliary information
  from survey data.
\newblock {\em Journal of the American Statistical Association\/}~{\em
  96\/}(453), 185--193.

\bibitem[\protect\citeauthoryear{Wu and Gagnon-Bartsch}{Wu and
  Gagnon-Bartsch}{2018}]{loop}
Wu, E. and J.~A. Gagnon-Bartsch (2018).
\newblock The loop estimator: Adjusting for covariates in randomized
  experiments.
\newblock {\em Evaluation Review\/}~{\em 42\/}(4), 458--488.

\bibitem[\protect\citeauthoryear{Wu and Ding}{Wu and
  Ding}{2018}]{wu2018randomization}
Wu, J. and P.~Ding (2018).
\newblock Randomization tests for weak null hypotheses in randomized
  experiments.

\bibitem[\protect\citeauthoryear{Yang and Tsiatis}{Yang and
  Tsiatis}{2001}]{yang_tsiatis_2001}
Yang, L. and A.~A. Tsiatis (2001).
\newblock Efficiency study of estimators for a treatment effect in a
  pretest–posttest trial.
\newblock {\em The American Statistician\/}~{\em 55\/}(4), 314--321.

\end{thebibliography}

\newpage
\section{Appendix} \label{appendix}

\subsection{Proofs}

\subsubsection{Randomization law of large numbers}

\begin{lemma}
\label{wlln}
Let $\mathcal{A}_n = \{ a_{i,n} \}_{i = 1}^n$ be a sequence of finite subsets of $\R$, and define $\bar{a}_n := \tfrac{1}{n} \sum_{i = 1}^n a_{i,n}$.  If $\tfrac{1}{n} \sum_{i = 1}^n a_{i,n}^2 = o(n)$, then we have:
\begin{align}
    \left| \frac{1}{n_1} \sum_{Z_i = 1} a_{i,n} - \bar{a}_n \right| \xrightarrow{p} 0
\end{align}
where $Z_i$ are defined in the main body of the paper.
\end{lemma}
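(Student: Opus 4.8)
The plan is to recognize $\bar{a}_n^{\mathrm{obs}} := \tfrac{1}{n_1}\sum_{Z_i = 1} a_{i,n}$ as the sample mean of a simple random sample of size $n_1$ drawn without replacement from the finite population $\{ a_{1,n}, \dots, a_{n,n} \}$, and then apply Chebyshev's inequality. The first step is to record the two classical moment identities for sampling without replacement: $\E[\bar{a}_n^{\mathrm{obs}}] = \bar{a}_n$ and
\begin{align}
\Var(\bar{a}_n^{\mathrm{obs}}) = \frac{n - n_1}{n} \cdot \frac{S_{a,n}^2}{n_1}, \qquad S_{a,n}^2 := \frac{1}{n-1}\sum_{i=1}^n (a_{i,n} - \bar{a}_n)^2 .
\end{align}
These follow from a direct computation of $\E[Z_i]$ and $\E[Z_i Z_j]$ under $\P_{n_1,n}$, or may simply be quoted from any standard survey-sampling reference.

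The second step is to bound the variance using the design constraint $p_{\min} \le n_1/n \le p_{\max}$. Since $1/n_1 \le 1/(p_{\min} n)$ and the finite-population correction satisfies $(n - n_1)/n = 1 - p_n \le 1$, we get $\Var(\bar{a}_n^{\mathrm{obs}}) \le S_{a,n}^2 / (p_{\min} n)$. Dropping the centering and using $S_{a,n}^2 \le \tfrac{n}{n-1}\cdot\tfrac{1}{n}\sum_{i=1}^n a_{i,n}^2 \le \tfrac{2}{n}\sum_{i=1}^n a_{i,n}^2$ for $n \ge 2$, the hypothesis $\tfrac{1}{n}\sum_{i=1}^n a_{i,n}^2 = o(n)$ yields $S_{a,n}^2 = o(n)$, hence $\Var(\bar{a}_n^{\mathrm{obs}}) = o(1)$. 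Chebyshev's inequality then gives, for every $\epsilon > 0$,
\begin{align}
\P\!\left( \left| \bar{a}_n^{\mathrm{obs}} - \bar{a}_n \right| > \epsilon \right) \le \frac{\Var(\bar{a}_n^{\mathrm{obs}})}{\epsilon^2} \longrightarrow 0,
\end{align}
which is exactly the claimed convergence in probability.

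There is no substantive obstacle here; the proof is a one-line variance computation followed by Chebyshev. The only points requiring a little care are getting the sampling-without-replacement variance formula exactly right --- in particular the $n - 1$ in the denominator of $S_{a,n}^2$ and the correction factor $(n - n_1)/n$ --- and observing that it is precisely the assumption $p_n \ge p_{\min}$ (so that $n_1$ does not grow too slowly relative to $n$) that keeps the $1/n_1$ factor from destroying the bound.
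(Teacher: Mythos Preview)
Your proof is correct and essentially identical to the paper's: both compute the exact variance of the sample mean under simple random sampling without replacement (the paper cites Freedman's Proposition~1, you quote the standard survey-sampling formula, but they are the same expression), bound it by the uncentered second moment to get $o(1)$, and finish with Chebyshev. There is no substantive difference in approach.
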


\begin{proof}
By Proposition 1 in \cite{freedman_regression}, the variance of $\tfrac{1}{n_1} \sum_{Z_i = 1} a_{i,n}$ has a simple expression:
\begin{align*}
\Var \left( \frac{1}{n_1} \sum_{Z_i = 1} a_{i,n} \right) = \left( \frac{1 - p_n}{p_n} \right) \left( \frac{1}{n - 1} \right) \frac{1}{n} \sum_{i = 1}^n (a_{i,n} - \bar{a})^2
\end{align*}
Since $\tfrac{1}{n} \sum_{i = 1}^n (a_{i,n} - \bar{a})^2 \leq \tfrac{1}{n} \sum_{i = 1}^n a_{i,n}^2 = o(n)$, this variance tends to zero.  From this, $| \tfrac{1}{n_1} \sum_{Z_i = 1} a_{i,n} - \bar{a} | \rightarrow_p 0$ follows from Chebyshev's inequality.
\end{proof}

\subsubsection{A maximal inequality}

\begin{proposition}
\label{crd_maximal_inequality}
For any function $f : \R^d \rightarrow \R$, define $\mathbb{G}_n(f)$ as follows:
\begin{align}
    \mathbb{G}_n(f) = \frac{1}{\sqrt{n}} \sum_{i = 1}^n \left[ \frac{Z_i f(\mathbf{x}_i)}{p_n} - f(\mathbf{x}_i) \right] 
\end{align}
where $p_n = n_1 / n$.  If $\mathcal{F}$ is any collection of functions, $f_0 \in \mathcal{F}$ is any fixed function, and $\delta > 0$ is any radius, then we have the inequality:
\begin{align}
\label{maximal_inequality}
    \E \left[ \sup_{f \in \mathcal{F} \, : \, || f - f_0 ||_n \leq \delta} | \mathbb{G}_n(f) - \mathbb{G}_n(f_0)| \right] \leq (C / p_{\min}) \int_0^{\delta} \sqrt{\log \mathsf{N}( \mathcal{F}, || \cdot ||_n, s)} \, \d s
\end{align}
where $|| f - f_0 ||_n^2 := \tfrac{1}{n} \sum_{i = 1}^n [f(\mathbf{x}_{i,n}) - f_0(\mathbf{x}_{i,n})]^2$ and $C < \infty$ is a universal constant.
\end{proposition}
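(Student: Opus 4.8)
The plan is to recognize the process $f \mapsto \mathbb{G}_n(f) - \mathbb{G}_n(f_0)$ as a mean-zero process whose increments are sub-Gaussian with respect to the pseudometric $\| f - g \|_n$, and then to invoke the classical chaining (Dudley entropy integral) bound. First I would rewrite the increment in a convenient form: since $\sum_{i=1}^n (Z_i - p_n) = n_1 - n p_n = 0$,
\[
\mathbb{G}_n(f) - \mathbb{G}_n(f_0) \;=\; \frac{1}{p_n \sqrt{n}} \sum_{i=1}^n (Z_i - p_n)\big[f(\mathbf{x}_i) - f_0(\mathbf{x}_i)\big] \;=\; \frac{1}{p_n}\,\widetilde{\mathbb{G}}_n(f),
\]
where $\widetilde{\mathbb{G}}_n(f) := \tfrac{1}{\sqrt n}\sum_{i=1}^n (Z_i - p_n)[f(\mathbf{x}_i) - f_0(\mathbf{x}_i)]$ satisfies $\widetilde{\mathbb{G}}_n(f_0) = 0$. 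The factor $1/p_n \le 1/p_{\min}$ is exactly what produces the $1/p_{\min}$ in (\ref{maximal_inequality}), so the strategy is to prove the inequality for $\widetilde{\mathbb{G}}_n$ with a universal constant and then divide through by $p_n$. It is also worth noting that $\widetilde{\mathbb{G}}_n(f) - \widetilde{\mathbb{G}}_n(g) = \tfrac{1}{\sqrt n}\sum_{i \,:\, Z_i = 1}\big[(f-g)(\mathbf{x}_i) - \overline{(f-g)}\big]$ is $n^{-1/2}$ times the centered sum of a simple random sample of size $n_1$, drawn without replacement, from the population $\{(f-g)(\mathbf{x}_i)\}_{i=1}^n$.

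The technical heart is an increment estimate of the form $\mathbb{E}\exp\{\lambda(\widetilde{\mathbb{G}}_n(f) - \widetilde{\mathbb{G}}_n(g))\} \le \exp\{c_0 \lambda^2 \| f-g \|_n^2\}$ for a universal constant $c_0$, i.e.\ the increments of $\widetilde{\mathbb{G}}_n$ are sub-Gaussian with variance proxy proportional to $\| f-g \|_n^2$. Concentration inequalities of Hoeffding type are available for linear functionals of sampling without replacement; the point demanding care is to keep the variance proxy controlled by $\| f-g \|_n^2 = \tfrac1n\sum_i (f-g)^2(\mathbf{x}_i)$ rather than by the much cruder $\max_i (f-g)^2(\mathbf{x}_i)$ that falls out of a range-based argument (or of a comparison with sampling \emph{with} replacement from the population). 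One route is the standard symmetrization trick: replace $Z$ by $Z - Z'$ for an independent copy $Z'$ — legitimate here because symmetrization only exploits independence of the replica, not independence across the coordinates of $Z$ — and then bound the resulting signed sum; another is to appeal directly to a Bernstein--Serfling-type bound for simple random samples. Either way, $\widetilde{\mathbb{G}}_n$ becomes a sub-Gaussian process on the metric space $(\mathcal{F}, \| \cdot \|_n)$ (which, in the only case we need to treat, namely when the right-hand side of (\ref{maximal_inequality}) is finite, is totally bounded, hence separable).

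Granting the increment estimate, the result follows from the standard maximal inequality for sub-Gaussian processes obtained by chaining (see, e.g., Section~2.2 of \cite{vdv_wellner}). Since $\widetilde{\mathbb{G}}_n(f_0) = 0$ and the $\| \cdot \|_n$-diameter of $\{f \in \mathcal{F} : \| f - f_0 \|_n \le \delta\}$ is at most $2\delta$, chaining over that set gives
\[
\mathbb{E}\Big[ \sup_{f \in \mathcal{F} \,:\, \| f - f_0 \|_n \le \delta} \big|\widetilde{\mathbb{G}}_n(f) - \widetilde{\mathbb{G}}_n(f_0)\big| \Big] \;\le\; C \int_0^{\delta} \sqrt{\log \mathsf{N}(\mathcal{F}, \| \cdot \|_n, s)} \, \mathsf{d}s
\]
for a universal $C$ (absorbing $c_0$ and the chaining constants). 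Dividing by $p_n$ and using $p_n \ge p_{\min}$ yields (\ref{maximal_inequality}). For very small $n$, where chaining buys nothing, the crude estimate $|\mathbb{G}_n(f) - \mathbb{G}_n(f_0)| \le \| f - f_0 \|_n / p_n$ together with $\mathsf{N} \ge 2$ already suffices.

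I expect the main obstacle to be precisely this increment estimate, and within it the fact that the $Z_i$ are dependent (sampling without replacement). Standard symmetrization lemmas and sub-Gaussian maximal inequalities are stated for i.i.d.\ data, so one must install their finite-population analogues; more to the point, a naive concentration bound for $\widetilde{\mathbb{G}}_n(f) - \widetilde{\mathbb{G}}_n(g)$ introduces an empirical sup-norm in place of $\| f-g \|_n$ (equivalently, a Bernstein-type correction term in the chaining bound), which would destroy the clean entropy-integral form for general unbounded classes $\mathcal{F}$. The real work, then, is in producing an increment bound genuinely calibrated to $\| \cdot \|_n$; once that is in hand the rest is a routine appeal to generic chaining.
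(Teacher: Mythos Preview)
Your plan is correct and matches the paper's proof essentially step for step: establish that $f \mapsto \mathbb{G}_n(f)$ has sub-Gaussian increments with respect to $\|\cdot\|_n$ (with a $1/p_{\min}$ in the variance proxy), note separability, and then apply Dudley's entropy integral (Corollary~2.2.8 in \cite{vdv_wellner}). The only place where the paper is more concrete is exactly the step you flag as the main obstacle: rather than symmetrization or a Bernstein--Serfling argument, the paper simply cites Lemma~A2 of Wu \& Ding \cite{wu2018randomization}, which directly gives the tail bound
\[
\P\big(|\mathbb{G}_n(h)| > t\big) \;\le\; 2\exp\!\left(-\frac{t^2}{2\,\sigma^2(h)/p_n^2}\right),
\qquad \sigma^2(h) = \tfrac{1}{n}\sum_{i=1}^n \big(h(\mathbf{x}_i)-\bar h\big)^2 \le \|h\|_n^2,
\]
with no range term; applied to $h = f-g$ this is precisely your increment estimate.
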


\begin{proof}
First, we show that $\{ \mathbb{G}_n(f) \, : \, f \in \mathcal{F} \}$ is a $(c / p_{\min})$-sub-Gaussian process indexed by the metric space $(\mathcal{F}, || \cdot ||_n)$.  Lemma A2 in Wu \& Ding \citep{wu2018randomization} gives the following exponential tail bound for the distance between sample averages and their population counterparts:
\begin{align*}
\P \left( \left| \frac{1}{n} \sum_{i = 1}^n\left[  \frac{Z_i f(\mathbf{x}_i)}{p_n} - f(\mathbf{x}_i)\right] \right| > t \right)  \leq 2 \exp \left( - \frac{n t^2}{2 \sigma^2(f) / p_n^2} \right)
\end{align*}
where $\sigma^2(f) = \tfrac{1}{n} \sum_{i = 1}^n [f(\mathbf{x}_i) - \bar{f}]^2$ and $p_n = n_1 / n$.  We may loosen the upper bound using the inequality $\sigma^2(f) / p_n^2 \leq || f ||_n^2 / p_{\min}^2$.  That implies the following tail bound for $\mathbb{G}_n(f)$:
\begin{align*}
    \P( | \mathbb{G}_n(f)| > t) \leq 2 \exp \left( -\frac{ t^2}{2 || f ||_n^2 / p_{\min}^2} \right)
\end{align*}
Since the above bound holds for any function $f$, it also holds for functions of the form $f = g - h$ with $g, h \in \mathcal{F}$.  Since $\mathbb{G}_n(g - h) = \mathbb{G}_n(g) - \mathbb{G}_n(h)$, this yields a sub-Gaussian tail bound for the increments of the process.
\begin{align*}
    \P( | \mathbb{G}_n(g) - \mathbb{G}_n(h)| > t) \leq 2 \exp \left( - \frac{t^2}{2 || g - h ||_n^2 / p_{\min^2}} \right)
\end{align*}
Hence, $\{ \mathbb{G}_n(f) \, : \, f \in \mathcal{F} \}$ is a sub-Gaussian process.  Moreover, this process is separable since the map $f \mapsto \mathbb{G}_n(f)$ is always continuous in the $|| \cdot ||_n$-norm on $\mathcal{F}$.  Therefore, by Dudley's entropy integral (Corollary 2.2.8 in \cite{vdv_wellner}), there exists some universal constant $C < \infty$ such that (\ref{maximal_inequality}) holds.
\end{proof}

\subsubsection{Population regression is always prediction unbiased}

\begin{lemma}
\label{population_prediction_unbiasedness}
\textup{\textbf{(Prediction unbiasedness in the population)}}\\
Let $\{ \hat{\mu}_{1,n} \}_{n \geq 1}$ be a sequence of prediction-unbiased regression functions.  Assume that there exists a nonrandom sequence $\{ \mu_{1,n}^* \}_{n \geq 1}$ such that $|| \hat{\mu}_{1,n} - \mu_{1,n}^* ||_n \xrightarrow{p} 0$ and $\tfrac{1}{n} \sum_{i = 1}^n [ \mu_{1,n}^*(\mathbf{x}_{i,n}) - y_{1i,n}]^2 = o(n)$.  Then it is always possible to choose the sequence $\{ \mu_{1,n}^* \}_{n \geq 1}$ so that (\ref{moment_matching}) holds for every $n$.
\begin{align}
    \label{moment_matching}
    \frac{1}{n} \sum_{i = 1}^n \mu_{1,n}^*(\mathbf{x}_{i,n}) = \frac{1}{n} \sum_{i = 1}^n y_{1i,n}
\end{align}
\end{lemma}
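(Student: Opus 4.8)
The plan is to obtain the required sequence from $\{\mu_{1,n}^*\}$ by a single rigid vertical shift. Set
\[
c_n := \frac{1}{n}\sum_{i=1}^n y_{1i,n} - \frac{1}{n}\sum_{i=1}^n \mu_{1,n}^*(\mathbf{x}_{i,n}),
\qquad
\tilde{\mu}_{1,n}^*(\mathbf{x}) := \mu_{1,n}^*(\mathbf{x}) + c_n .
\]
By construction $\tilde{\mu}_{1,n}^*$ satisfies the moment-matching identity (\ref{moment_matching}) exactly, for every $n$. So the only thing to check is that $\{\tilde{\mu}_{1,n}^*\}$ is still an admissible population sequence, i.e. that $\|\hat{\mu}_{1,n} - \tilde{\mu}_{1,n}^*\|_n \xrightarrow{p} 0$ and $\tfrac1n\sum_i[\tilde{\mu}_{1,n}^*(\mathbf{x}_i) - y_{1i}]^2 = o(n)$. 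Both follow from the triangle inequality the moment the shift is shown to be negligible: $\|\hat{\mu}_{1,n} - \tilde{\mu}_{1,n}^*\|_n \le \|\hat{\mu}_{1,n} - \mu_{1,n}^*\|_n + |c_n|$ and $\tfrac1n\sum_i[\tilde{\mu}_{1,n}^*(\mathbf{x}_i) - y_{1i}]^2 \le 2\,\mathsf{MSE}_n(1) + 2 c_n^2$. Hence the entire proof reduces to the claim $c_n \to 0$.

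To prove $c_n \to 0$, I would start from prediction unbiasedness, which (with probability one) says $\tfrac{1}{n_1}\sum_{Z_i=1}[\hat{\mu}_{1,n}(\mathbf{x}_i) - y_{1i}] = 0$. Splitting $\hat{\mu}_{1,n}(\mathbf{x}_i) - y_{1i} = [\hat{\mu}_{1,n}(\mathbf{x}_i) - \mu_{1,n}^*(\mathbf{x}_i)] + \epsilon_{1i}^*$ with $\epsilon_{1i}^* := \mu_{1,n}^*(\mathbf{x}_i) - y_{1i}$ and averaging over the treated units, the first term is $o_p(1)$: by Cauchy--Schwarz and $p_n \ge p_{\min}$ its absolute value is at most $p_{\min}^{-1/2}\|\hat{\mu}_{1,n} - \mu_{1,n}^*\|_n$, which vanishes in probability by stability. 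For the second term I would invoke the randomization law of large numbers (Lemma \ref{wlln}) on the fixed array $a_{i,n} := \epsilon_{1i,n}^*$; its hypothesis is exactly $\tfrac1n\sum_i a_{i,n}^2 = \mathsf{MSE}_n(1) = o(n)$, which we are given, so $\tfrac{1}{n_1}\sum_{Z_i=1}\epsilon_{1i}^* = \tfrac1n\sum_{i=1}^n \epsilon_{1i}^* + o_p(1) = -c_n + o_p(1)$. Combining the two pieces gives $0 = o_p(1) - c_n + o_p(1)$, i.e. $c_n = o_p(1)$; since $c_n$ is a \emph{deterministic} sequence, convergence in probability to $0$ is the same as ordinary convergence, so $c_n \to 0$.

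The main point to get right --- and the reason the statement is not entirely trivial --- is that one must \emph{not} apply Lemma \ref{wlln} to the array $\{y_{1i}\}$ directly: the hypotheses place no bound on $\tfrac1n\sum_i y_{1i}^2$, and the Ces\`aro averages of $\mu_{1,n}^*$ and of $y_{1i}$ need not stabilize individually even though their difference does. Routing the argument through the residual array $\epsilon_{1i}^*$, whose second moment is controlled by $\mathsf{MSE}_n(1) = o(n)$, is what makes everything go through under only the stated assumptions. The remaining steps --- the triangle-inequality bookkeeping at the end and the appeal to $p_n \ge p_{\min}$ --- are routine.
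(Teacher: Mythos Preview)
Your proof is correct and follows essentially the same route as the paper: both define the shifted sequence by subtracting the population bias, show that bias is $o(1)$ by combining prediction unbiasedness, stability, and the randomization law of large numbers applied to the residual array $\epsilon_{1i}^*$ (whose second moment is $\mathsf{MSE}_n(1)=o(n)$), and then verify the shifted sequence inherits the required properties via the triangle inequality. Your organization is slightly cleaner --- you name $c_n$ explicitly and reduce everything to $c_n\to 0$ up front --- but the ingredients and their order are the same as the paper's.
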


\begin{proof}
Let $\{ \mu_{1,n}^* \}_{n \geq 1}$ be any fixed sequence of functions satisfying $|| \hat{\mu}_{1,n} - \mu_{1,n}^* ||_n \xrightarrow{p} 0$ and $\tfrac{1}{n} \sum_{i = 1}^n [\mu_{1,n}^*(\mathbf{x}_i) - y_{1i}]^2 = o(n)$.  A simple calculation shows that the prediction bias must be vanishing:
\begin{align*}
   \left|  \frac{1}{n} \sum_{i = 1}^n [\mu_{1,n}^*(\mathbf{x}_i) - y_{1i}] \right| &= \left| \frac{1}{n_1} \sum_{Z_i = 1} [\mu_{1,n}^*(\mathbf{x}_i) - y_{1i}] \right| + o_p(1)\\
    &= \left| \frac{1}{n_1} \sum_{Z_i = 1} [ \mu_{1,n}^*(\mathbf{x}_i) - \hat{\mu}_{1,n}(\mathbf{x}_i)] \right| + o_p(1)\\
    &\leq \frac{1}{p} \frac{1}{n} \sum_{i = 1}^n | \mu_{1,n}^*(\mathbf{x}_i) - \hat{\mu}_{1,n}(\mathbf{x}_i)| + o_p(1) \\
    &\leq \frac{1}{p} || \mu_{1,n}^* - \hat{\mu}_{1,n} ||_n + o_p(1)
\end{align*}
The right-hand side of the above display is $o_p(1)$ by stability, and the left-hand side is nonrandom.  Therefore, it must be the case that $\tfrac{1}{n} \sum_{i = 1}^n [ \mu_{1,n}^*(\mathbf{x}_i) - y_{1i}] = o(1)$.  Then, we may define $\nu_{1,n}^*(\mathbf{x}) = \mu_{1,n}^*(\mathbf{x}) - \tfrac{1}{n} \sum_{i = 1}^n [\mu_{1,n}^*(\mathbf{x}_i) - y_{1i}]$ (i.e. subtract off the bias).  The property $|| \nu_{1,n}^* - \hat{\mu}_{1,n} ||_n \xrightarrow{p} 0$ still holds, as does $\tfrac{1}{n} \sum_{i = 1}^n [ \nu_{1,n}^*(\mathbf{x}_{i,n}) - y_{1i,n}]^2 = o(n)$.  Therefore, we may as well have chosen the original sequence $\mu_{1,n}^*$ so that (\ref{moment_matching}) is satisfied.
\end{proof}

\subsubsection{Proof of Theorem \ref{consistency}}

\begin{proof}
Let $\{ \mu_{1,n}^* \}_{n \geq 1}$ be the population prediction unbiased sequence which is guaranteed to exist by Lemma \ref{population_prediction_unbiasedness}.  We may write:
\begin{align*}
\left| \frac{1}{n} \sum_{i = 1}^n (\hat{y}_{1i} - y_{1i}) \right| &= \left| \frac{1}{n} \sum_{i = 1}^n [\hat{\mu}_{1,n}(\mathbf{x}_i) - y_{1i}] \right| \\
&\leq \left| \frac{1}{n} \sum_{i = 1}^n [\mu_{1,n}^*(\mathbf{x}_i) - y_{1i}]\right| + \left| \frac{1}{n} \sum_{i = 1}^n [\hat{\mu}_{1,n}(\mathbf{x}_i) - \mu_{1,n}^*(\mathbf{x}_i)] \right| \\
&\leq 0 + || \hat{\mu}_{1,n} - \mu_{1,n}^* ||_n\\
&= o_p(1)
\end{align*}
A symmetric argument shows that $\tfrac{1}{n} \sum_{i = 1}^n (\hat{y}_{0i} - y_{0i}) = o_p(1)$.  Thus, $\hat{\tau}_n - \tau_n = o_p(1)$.
\end{proof}

\subsubsection{Proof of Theorem \ref{asymptotic_linearity}}

\begin{proof}
We start by proving (\ref{expansion1}).  Let $ \{ \mu_{1,n}^* \}_{n \geq 1}$ be a sequence of nonrandom functions such that $|| \hat{\mu}_{1,n} - \mu_{1,n}^* ||_n \rightarrow_p 0$ and $\tfrac{1}{n} \sum_{i = 1}^n \mu_{1,n}^*(\mathbf{x}_i) = \tfrac{1}{n} \sum_{i = 1}^n y_{1i}$.  Such a sequence is always guaranteed to exist (Lemma \ref{population_prediction_unbiasedness}).  By rearranging terms, we may write:
\begin{align*}
\frac{1}{n} \sum_{i = 1}^n ( \hat{y}_{1i} - y_{1i}) &= \frac{1}{n} \sum_{i = 1}^n [\hat{\mu}_{1,n}(\mathbf{x}_i) - y_{1i}] + \frac{1}{n} \sum_{i = 1}^n \frac{Z_i [y_{1i} - \hat{\mu}_{1,n}(\mathbf{x}_i)]}{p_n}\\
&=\frac{1}{n_1} \sum_{Z_i = 1} \epsilon_{1i}^* + \underbrace{\frac{1}{n} \sum_{i = 1}^n \left[ \frac{Z_i \mu_{1,n}^*(\mathbf{x}_i)}{p_n} - \mu_{1,n}^*(\mathbf{x}_i) \right]}_{\sqrt{n} \mathbb{G}_n(\mu_{1,n}^*)}  - \underbrace{\frac{1}{n} \sum_{i = 1}^n \left[ \frac{Z_i \hat{\mu}_{1,n}(\mathbf{x}_i)}{p_n} - \hat{\mu}_{1,n}(\mathbf{x}_i) \right]}_{\sqrt{n} \mathbb{G}_n( \hat{\mu}_{1,n})}
\end{align*}
The quantity $\mathbb{G}_n( \mu_{1,n}^*) - \mathbb{G}_n( \hat{\mu}_{1,n})$ is vanishing in probability.  To see this, let $\epsilon, \delta > 0$ be arbitrary.  For every $r > 0$, Proposition \ref{crd_maximal_inequality} and the ``typically simple realizations" assumption imply the following tail bound:
\begin{align*}
\P \left( \sup_{\mu \in \mathcal{F}_n, \: \, || \mu - \mu_{1,n}^* ||_n \leq r} | \mathbb{G}_n(\mu) - \mathbb{G}_n(\mu_{1,n}^*) | > \epsilon \right) &\leq \frac{C}{p_{\min} \epsilon} \int_0^r \sqrt{\log \mathsf{N}( \mathcal{F}_n \cup \{ \mu_{1,n}^* \}, || \cdot ||_n, s)} \, \d s\\
&\leq \frac{C}{p_{\min} \epsilon} \int_0^r \sqrt{\log(1 + \mathsf{N}( \mathcal{F}_n, || \cdot ||_n, s)} \, \d s\\
&\leq \frac{C}{p_{\min} \epsilon} \int_0^r 1 + \sqrt{\log \mathsf{N}( \mathcal{F}_n, || \cdot ||_n, s)} \, \d s\\
&\leq \frac{C r}{p_{\min} \epsilon} + \frac{C}{p_{\min}} \int_0^r \sup_{n \geq 1} \sqrt{\log \mathsf{N}(\mathcal{F}_n, || \cdot ||_n, s)} \, \d s
\end{align*}
In the second-to-last step, we used the fact that $\log (1 + \mathsf{N}) \leq 1 + \log \mathsf{N}$ since the covering number is always at least one.  The upper bound is vanishing as $r \downarrow 0$ so there exists $r^*$ sufficiently small so that the upper bound is less than $\delta/3$.

For large enough $n$, $\P_{n_1,n}( \hat{\mu}_n \in \mathcal{F}_n)$ and $\P_{n_1,n}( || \hat{\mu}_n - \mu_{1,n}^* ||_n \leq r^*)$ are both at least $1 - \delta/3$.  Thus, with probability at least $1 - \delta$, $| \mathbb{G}_n(\hat{\mu}_{1,n}) - \mathbb{G}_n( \mu_{1,n}^*)| \leq \epsilon$.  Since $\epsilon, \delta$ are arbitrary, this shows that $| \mathbb{G}_n(\mu_{1,n}^*) - \mathbb{G}_n( \hat{\mu}_{1,n})| = o_p(1)$.  Therefore, $\tfrac{1}{n} \sum_{i = 1}^n (\hat{y}_{1i} - y_{1i}) = \tfrac{1}{n_1} \sum_{Z_i = 1} \epsilon_{1i}^* + o_p(n^{-1/2})$.
\end{proof}

\subsubsection{Proof of Theorem \ref{confidence_intervals}}

\begin{proof}
By the reverse triangle inequality, we have:
\begin{align*}
\left|  \left( \widehat{\mathsf{MSE}}_n(1) \right)^{1/2} - \left( \frac{1}{n_1} \sum_{Z_i = 1} (\epsilon_{1i}^*)^2 \right)^{1/2}  \right| &\leq \left( \frac{1}{n_1} \sum_{Z_i = 1} [\hat{\mu}_{1,n}(\mathbf{x}_i) - \mu_{1,n}^*(\mathbf{x}_i)]^2 \right)^{1/2}\\
&\leq \left( \frac{1}{n_1} \sum_{i = 1}^n [ \hat{\mu}_{1,n}(\mathbf{x}_i) - \mu_{1,n}^*(\mathbf{x}_i)]^2 \right)^{1/2}\\
&= \frac{1}{\sqrt{p_n}} || \hat{\mu}_{1,n} - \mu_{1,n}^* ||_n\\
&= o_p(1)
\end{align*}
Since $\tfrac{1}{n_1} \sum_{Z_i = 1} (\epsilon_{1i}^*)^2 \leq p_n^{-1} \mathsf{MSE}_n(1) = \mathcal{O}(1)$, convergence of square roots implies the convergence of the left-hand side of the above display without the square roots.  By the fourth moment assumption on $\epsilon_{1i}^*$ and the randomization law of large numbers (Lemma \ref{wlln}), $| \tfrac{1}{n_1} \sum_{Z_i = 1} (\epsilon_{1i}^*)^2 - \mathsf{MSE}_n(1)| \xrightarrow{p} 0$.  Therefore, $| \widehat{\mathsf{MSE}}_n(1) - \mathsf{MSE}_n(1)| \xrightarrow{p} 0$. A symmetric argument shows that $|\widehat{\mathsf{MSE}}_n(0) - \mathsf{MSE}_n(0)| \xrightarrow{p} 0$.

The assumptions of Corollary \ref{asymptotic_normality} imply that $n [ \tfrac{1}{n_1} \mathsf{MSE}_n(1) + \tfrac{1}{n_0} \mathsf{MSE}_n(0)]$ is bounded away from zero.  Therefore, consistent estimation of $\mathsf{MSE}_n(1)$ and $\mathsf{MSE}_n(0)$ implies the following:
\begin{align*}
    \frac{\tfrac{1}{n_1} \widehat{\mathsf{MSE}}_n(1) + \tfrac{1}{n_0} \widehat{\mathsf{MSE}}_n(0)}{\tfrac{1}{n_1} \mathsf{MSE}_n(1) + \tfrac{1}{n_0} \mathsf{MSE}_n(0)} \xrightarrow{p} 1
\end{align*}
The asymptotic validity of the confidence intervals then follows from Slutsky's theorem.  In the case where the treatment has no effect and $\hat{\mu}_1$ and $\hat{\mu}_0$ are estimated by the same method, then $\mu_{1,n}^* = \mu_{0,n}^*$ so $\epsilon_{1i}^* - \epsilon_{0i}^* = 0$.  This means that the upper bound on $\sigma_n$ is exactly $\sigma_n$, and again Slutsky's theorem implies asymptotically exact coverage. 
\end{proof}

\subsubsection{Proof of Theorem \ref{logistic_regression}}

\begin{proof}
We will use Theorem \ref{general_parametric_models} to prove the result.  Define $\psi(s) = \log(1 + e^s)$.  Assumption \ref{convex_m_estimator} is satisfied, because the loss function is $\ell(\theta, \mathbf{x}, y) = -y \mathbf{x}^{\top} \theta + \psi(\theta^{\top} \mathbf{x})$, which is convex in $\theta$.  The existence of a population solution is assumed, so the first half of assumption \ref{growth} does not require any calculation to verify.  However, the growth condition requires an argument.  Let $L$ be a uniform bound on $\tfrac{1}{n} \sum_{i = 1}^n || \mathbf{x}_i ||^4$.  Since $\ddot{\psi}$ is a 1-Lipschitz function, the map $\theta \mapsto \nabla^2 \L_n(\theta)$ is an $L$-Lipschitz function.
\begin{align*}
|| \nabla^2 \L_n(\theta) - \nabla^2 \L_n(\theta_{1,n}^* ||_{\mathsf{op}} &= \left| \left| \frac{1}{n} \sum_{i = 1}^n ( \ddot{\psi}(\theta^{\top} \mathbf{x}_i) - \ddot{\psi}(\theta_{1,n}^{* \top} \mathbf{x}_i) \mathbf{x}_i \mathbf{x}_i^{\top} \right| \right|_{\mathsf{op}}\\
&\leq \frac{1}{n} \sum_{i = 1}^n | \ddot{\psi}(\theta^{\top} \mathbf{x}_i) - \ddot{\psi}(\theta_{1,n}^{* \top} \mathbf{x}_i) | \cdot || \mathbf{x}_i \mathbf{x}_i^{\top} ||_{\mathsf{op}}\\
&\leq \frac{1}{n} \sum_{i = 1}^n | (\theta - \theta_{1,n}^*)^{\top} \mathbf{x}_i| \cdot || \mathbf{x}_i ||^2\\
&\leq \left( \frac{1}{n} \sum_{i = 1}^n || \mathbf{x}_i ||^3 \right) || \theta - \theta_{1,n}^* ||\\
&\leq L || \theta - \theta_{1,n}^* ||
\end{align*}
This implies that for all $\theta$ in a ball of radius $\tfrac{1}{2} \lambda_{\min} / L$ around $\theta_{1,n}^*$, $\nabla^2 \L_n(\theta) \succeq \tfrac{1}{2} \lambda_{\min} \mathbf{I}_{d \times d}$.
\begin{align*}
\Lambda_{\min} ( \nabla^2 \L_n(\theta)) &= \inf_{|| v || = 1} v^{\top} \nabla^2 \L_n(\theta) v\\
&\geq \inf_{|| v || = 1} v^{\top} \nabla^2 \L_n (\theta_{1,n}^*) v - \sup_{|| v || = 1} v^{\top} [\nabla^2 \L_n(\theta) - \nabla^2 \L_n(\theta_{1,n}^*)] v\\
&\geq \lambda_{\min} - || \nabla^2 \L_n(\theta) - \nabla^2 \L_n(\theta_{1,n}^*) ||_{\mathsf{op}}\\
&\geq \lambda_{\min} - L || \theta - \theta_{1,n}^* ||
\end{align*}
The lower bound is at least $\tfrac{1}{2} \lambda_{\min}$ since $L || \theta - \theta_{1,n}^* || \leq \tfrac{1}{2} \lambda_{\min}$.  From this, assumption \ref{growth} follows from Lemma \ref{growth_lemma}.

Next, we verify assumption \ref{smoothness}.  This is a straightforward consequence of the inequality $(a - b)^2 \leq 2a^2 + 2b^2$ and the fact that $\psi$ is a 1-Lipschitz function.
\begin{align*}
|| \ell_{\theta} - \ell_{\phi} ||_n^2 &\leq \frac{2}{n} \sum_{i = 1}^n [\mathbf{x}_i^{\top} (\phi - \theta)]^2 + \frac{2}{n} \sum_{i = 1}^n (\psi(\theta^{\top} \mathbf{x}_i) - \psi(\phi^{\top} \mathbf{x}_i))^2\\
&\leq \frac{2}{n} \sum_{i = 1}^n || \mathbf{x}_i ||^2 || \phi - \theta ||^2 + \frac{2}{n} \sum_{i = 1}^n || \mathbf{x}_i^{\top} (\theta - \phi) ||^2\\
&\leq 4 L || \theta - \phi ||^2
\end{align*}
Thus, $|| \ell_{\theta} - \ell_{\phi} ||_n \leq 2 \sqrt{L} || \theta - \phi ||$.

Finally, we need to check that $|| \mu_{\theta} - \mu_{\phi} ||_n \leq M || \theta - \phi ||_n$ for some constant $M < \infty$.  This is simple, since $\dot{\psi}$ is also a 1-Lipschitz function.
\begin{align*}
|| \mu_{\theta} - \mu_{\phi} ||_n^2 &= \frac{1}{n} \sum_{i = 1}^n [\dot{\psi}(\theta^{\top} \mathbf{x}_i) - \dot{\psi}( \phi^{\top} \mathbf{x}_i)]^2\\
&\leq \frac{1}{n} \sum_{i = 1}^n [(\theta - \phi)^{\top} \mathbf{x}_i]^2\\
&\leq L || \theta - \phi ||^2
\end{align*}
Thus, we may take $M = \sqrt{L}$.  Since all the conditions of Theorem \ref{general_parametric_models} are satisfied, the conclusion follows.
\end{proof}

\subsubsection{Proof of Theorem \ref{poisson_regression}}

\begin{proof}
Once again, we will use Theorem \ref{general_parametric_models}.  Assumption \ref{convex_m_estimator} is satisfied because the loss function $\ell(\theta, \mathbf{x}, y) = - y \mathbf{x}^{\top} \theta + \exp(\theta^{\top} \mathbf{x})$ is convex in $\theta$.  The existence requirement of Assumption \ref{growth} is a condition of the theorem, and the ``growth" condition will be verified using the same strategy that was used in the proof of Theorem \ref{logistic_regression}.  Let $L < \infty$ be a uniform bound on $|| \mathbf{x}_i ||$ and $\tfrac{1}{n} \sum_{i = 1}^n y_{1i}^2$.  For any $\theta \in \mathbb{B}_1(\theta_{1,n}^*)$, the triangle inequality and the mean-value theorem imply:
\begin{align*}
|| \nabla \L_n(\theta) - \nabla^2 \L_n(\theta_{1,n}^*) ||_{\mathsf{op}} &= \left| \left| \frac{1}{n} \sum_{i = 1}^n \left( e^{\theta^{\top} \mathbf{x}_i} - e^{\theta_{1,n}^{* \top} \mathbf{x}_i} \right) \mathbf{x}_i \mathbf{x}_i^{\top} \right| \right|_{\mathsf{op}}\\
&\leq \frac{1}{n} \sum_{i = 1}^n \left| e^{\theta^{\top} \mathbf{x}_i} - e^{\theta_{1,n}^{* \top} \mathbf{x}_i} \right| \cdot || \mathbf{x}_i \mathbf{x}_i^{\top} ||_{\mathsf{op}}\\
&= \frac{1}{n} \sum_{i = 1}^n e^{c_i} | \theta^{\top} 
\mathbf{x}_i - \theta_{1,n}^{* \top} \mathbf{x}_i | \cdot || \mathbf{x}_i ||^2
\end{align*}
In the above display, $c_i$ is some number between $\theta^{\top} \mathbf{x}_i$ and $\theta_{1,n}^{* \top} \mathbf{x}_i$.  Since $\theta$ is within distance one of $\theta_{1,n}^*$ and $|| \mathbf{x}_i ||$ is bounded by $L$, $|c_i - \theta_{1,n}^{* \top} \mathbf{x}_i| \leq | \theta^{\top} \mathbf{x}_i - \theta_{1,n}^{* \top} \mathbf{x}_i| \leq L$.  In particular, $c_i \leq \theta^{\top} \mathbf{x}_i + L$.  We may then further bound the above by:
\begin{align*}
|| \nabla \L_n(\theta) - \nabla^2 \L_n(\theta_{1,n}^*) ||_{\mathsf{op}} &\leq \frac{1}{n} \sum_{i = 1}^n e^{\theta_{1,n}^{* \top} \mathbf{x}_i + L} || \theta - \theta_{1,n}^* || \cdot || \mathbf{x}_i ||^3\\
&\leq L^3 e^L \left( \frac{1}{n} \sum_{i = 1}^n e^{\theta_{1,n}^{* \top} \mathbf{x}_i} \right) || \theta - \theta_{1,n}^* ||\\
&= L^3 e^L \left( \frac{1}{n} \sum_{i = 1}^n y_{1i} \right) || \theta - \theta_{1,n}^* ||\\
&\leq L^{3.5} e^L || \theta - \theta_{1,n}^* ||
\end{align*}
Set $M = \min \{ \tfrac{1}{2} \lambda_{\min} / [L^{3.5} e^L], 1 \}$.  For any $\theta \in \mathbb{B}_{M}(\theta_{1,n}^*)$, we have $\Lambda_{\min} ( \nabla^2 \L_n(\theta)) \geq \Lambda_{\min} ( \nabla^2 \L_n(\theta_{1,n}^*)) - || \nabla^2 \L_n(\theta) - \nabla^2 \L_n(\theta_{1,n}^*) ||_{\mathsf{op}} \geq \tfrac{1}{2} \lambda_{\min}$, so Lemma \ref{growth_lemma} can be used to verify Assumption \ref{growth}.  

Next, we check Assumption \ref{smoothness}.  Let $\theta, \phi \in \mathbb{B}_1(\theta_{1,n}^*)$ be arbitrary.  Again by the mean-value theorem, there exists vectors $\bar{\theta}_1, \cdots, \bar{\theta}_n \in \mathbb{B}_1(\theta_{1,n}^*)$ such that the following calculations hold:
\begin{align*}
\frac{1}{n} \sum_{i = 1}^n [\ell(\theta, \mathbf{x}, y) - \ell(\phi, \mathbf{x}_i, y_{1i})]^2 &= \frac{1}{n} \sum_{i = 1}^n \langle \nabla_{\theta} \ell(\bar{\theta}_i, \mathbf{x}_i, y_{1i}) , \theta - \phi \rangle^2\\
&\leq \left( \frac{1}{n} \sum_{i = 1}^n || \nabla_{\theta} \ell(\bar{\theta}_i, \mathbf{x}_i, y_{1i}) ||^2 \right) || \theta - \phi ||^2\\
&= \left( \frac{1}{n} \sum_{i = 1}^n || - y_{1i} \mathbf{x}_i + \mathbf{x}_i \exp( \bar{\theta}_i^{\top} \mathbf{x}_i) ||^2 \right) || \theta - \phi ||^2\\
&\leq \left( \frac{1}{n} \sum_{i = 1}^n y_{1i}^2 || \mathbf{x}_i ||^2 +  \frac{1}{n} \sum_{i = 1}^n || \mathbf{x}_i ||^2 ( \exp(\bar{\theta}_i^{\top} \mathbf{x}_i))^2 \right) || \theta - \phi ||^2\\
&\leq \left( L^3 + L^2 \frac{1}{n} \sum_{i = 1}^n \exp( \bar{\theta}_i^{\top} \mathbf{x}_i))^2 \right)  || \theta - \phi ||^2
\end{align*}
It only remains to show that $\tfrac{1}{n} \sum_{i = 1}^n (\exp(\bar{\theta}_i^{\top} \mathbf{x}_i))^2$ is bounded.  Since $\bar{\theta}_i$ is between $\theta$ and $\phi$ which are both in $\mathbb{B}_1(\theta_{1,n}^*)$, $|| \bar{\theta}_i - \theta_{1,n}^* || \leq 1$.  Therefore, $\exp( \bar{\theta}_i^{\top} \mathbf{x}_i) \leq \exp( \theta_{1,n}^{* \top} \mathbf{x}_i + L)$, and it suffices to show that $\tfrac{1}{n} \sum_{i = 1}^n \exp(\theta_{1,n}^{* \top} \mathbf{x}_i)^2$ is bounded.  However, this follows immediately from the fact that $\tfrac{1}{n} \sum_{i = 1}^n y_{1i}^2$ and $\mathsf{MSE}_n(1)$ are both bounded.

Finally, we need to prove that $|| \mu_{\theta} - \mu_{\phi} ||_n \leq M || \theta - \phi ||$ for some $M < \infty$.  This follows by essentially the same argument as above.  For any $\theta, \phi \in \mathbb{B}_1(\theta_{1,n}^*)$, we have:
\begin{align*}
|| \mu_{\theta} - \mu_{\phi} ||_n^2 &= \frac{1}{n} \sum_{i = 1}^n |\exp(\theta^{\top} \mathbf{x}_i) - \exp(\phi^{\top} \mathbf{x}_i)|^2\\
&= \frac{1}{n} \sum_{i = 1}^n [\exp( \bar{\theta}_i^{\top} \mathbf{x}_i) (\mathbf{x}_i^{\top} \theta - \mathbf{x}_i^{\top} \phi)]^2\\
&\leq \left( L^2 \frac{1}{n} \sum_{i = 1}^n \exp( \bar{\theta}_i^{\top} \mathbf{x}_i)^2 \right) || \theta - \phi ||^2
\end{align*}
By the above argument, $\tfrac{1}{n} \sum_{i = 1}^n \exp( \bar{\theta}_i^{\top} \mathbf{x}_i)^2$ is bounded.
\end{proof}

\subsubsection{Proof of Theorem \ref{log_transform}}

\begin{lemma}
\label{log_ols_consistency}
\textup{\textbf{(Consistency of $\hat{\theta}_1$)}}\\
Let $\hat{\theta}_{1,n} = \argmin_{\theta} \sum_{Z_i = 1} [\log(y_{1i}) - \theta^{\top} \mathbf{x}_i]^2$ and let $\theta_{1,n}^* = \argmin_{\theta} \sum_{i = 1}^n [\log (y_{1i}) - \theta^{\top} \mathbf{x}_i]^2$.  Then $|| \hat{\theta}_{1,n} - \theta_{1,n}^* || \xrightarrow{p} 0$.
\end{lemma}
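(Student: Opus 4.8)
The plan is to follow the proof that OLS is stable (Example~\ref{ols_is_stable}), replacing the step ``the Gram matrix converges to a fixed invertible limit'' with a triangular-array argument powered by the uniform lower bound $\tfrac{1}{n}\mathbf{X}^{\top}\mathbf{X} \succeq \lambda_{\min}\mathbf{I}_{d\times d}$. First I would write both estimators in closed form. Set $\hat{\mathbf{A}}_n := \tfrac{1}{n_1}\sum_{Z_i=1}\mathbf{x}_i\mathbf{x}_i^{\top}$, $\mathbf{A}_n := \tfrac{1}{n}\sum_{i=1}^n\mathbf{x}_i\mathbf{x}_i^{\top}$, $\hat{\mathbf{b}}_n := \tfrac{1}{n_1}\sum_{Z_i=1}\mathbf{x}_i\log y_{1i}$, and $\mathbf{b}_n := \tfrac{1}{n}\sum_{i=1}^n\mathbf{x}_i\log y_{1i}$. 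Then $\theta_{1,n}^* = \mathbf{A}_n^{-1}\mathbf{b}_n$, which is well defined since $\mathbf{A}_n \succeq \lambda_{\min}\mathbf{I}$ forces $\|\mathbf{A}_n^{-1}\|_{\mathsf{op}} \le 1/\lambda_{\min}$; and on the event that $\hat{\mathbf{A}}_n$ is invertible, $\hat{\theta}_{1,n} = \hat{\mathbf{A}}_n^{-1}\hat{\mathbf{b}}_n$.

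Next I would establish $\hat{\mathbf{A}}_n - \mathbf{A}_n \xrightarrow{p} 0$ and $\hat{\mathbf{b}}_n - \mathbf{b}_n \xrightarrow{p} 0$ by applying the randomization law of large numbers (Lemma~\ref{wlln}) entrywise. For the matrix, each coordinate $x_{ij}x_{ik}$ satisfies $\tfrac{1}{n}\sum_i(x_{ij}x_{ik})^2 \le \tfrac{1}{n}\sum_i\|\mathbf{x}_i\|^4 = \mathcal{O}(1)$ because $\|\mathbf{x}_i\|$ is uniformly bounded; for the vector, each coordinate $x_{ij}\log y_{1i}$ satisfies $\tfrac{1}{n}\sum_i(x_{ij}\log y_{1i})^2 \le (\sup_i\|\mathbf{x}_i\|^2)\sqrt{\tfrac{1}{n}\sum_i(\log y_{1i})^4} = \mathcal{O}(1)$ by Cauchy--Schwarz and the uniform bound on $\tfrac{1}{n}\sum_i(\log y_{1i})^4$. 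In particular these sums-of-squares are $o(n)$, so Lemma~\ref{wlln} applies coordinatewise, and since $d$ is fixed this upgrades to convergence in operator norm.

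Then I would combine these facts via the inverse-perturbation identity. Since $\|\mathbf{A}_n^{-1}\|_{\mathsf{op}} \le 1/\lambda_{\min}$ uniformly and $\hat{\mathbf{A}}_n - \mathbf{A}_n = o_p(1)$, with probability tending to one we have $\hat{\mathbf{A}}_n \succeq \tfrac{1}{2}\lambda_{\min}\mathbf{I}$, hence $\|\hat{\mathbf{A}}_n^{-1}\|_{\mathsf{op}} \le 2/\lambda_{\min}$ and $\hat{\theta}_{1,n}$ is well defined. Writing
\begin{align*}
\hat{\theta}_{1,n} - \theta_{1,n}^* = \hat{\mathbf{A}}_n^{-1}(\hat{\mathbf{b}}_n - \mathbf{b}_n) + \hat{\mathbf{A}}_n^{-1}(\mathbf{A}_n - \hat{\mathbf{A}}_n)\mathbf{A}_n^{-1}\mathbf{b}_n,
\end{align*}
the first term is at most $(2/\lambda_{\min})\|\hat{\mathbf{b}}_n - \mathbf{b}_n\| = o_p(1)$, and the second at most $(2/\lambda_{\min}^2)\|\mathbf{A}_n - \hat{\mathbf{A}}_n\|_{\mathsf{op}}\|\mathbf{b}_n\|$; since $\|\mathbf{b}_n\| \le \sqrt{\tfrac{1}{n}\sum_i\|\mathbf{x}_i\|^2}\sqrt{\tfrac{1}{n}\sum_i(\log y_{1i})^2} = \mathcal{O}(1)$, this is $o_p(1)$ as well. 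Adding the two bounds yields $\|\hat{\theta}_{1,n} - \theta_{1,n}^*\| \xrightarrow{p} 0$.

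I expect the only genuinely delicate point to be the control of the matrix inverses in the absence of a fixed limiting Gram matrix: one must propagate the deterministic lower eigenvalue bound $\lambda_{\min}$ through the argument rather than invoking the continuous mapping theorem, which is precisely why the assumptions on $\tfrac{1}{n}\mathbf{X}^{\top}\mathbf{X}$ (lower bound) and on the covariate and log-outcome moments are imposed uniformly in $n$. The remainder is a routine triangular-array transcription of the classical least-squares consistency argument.
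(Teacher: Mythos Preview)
Your argument is correct and more elementary than the paper's. The paper routes the proof through its general M-estimation machinery (Theorem~\ref{general_parametric_models}): it treats the squared-error loss $\ell(\theta,\mathbf{x},y) = [\log y - \theta^{\top}\mathbf{x}]^2$ as a convex loss, verifies the growth condition via $\nabla^2\L_n(\theta) = \tfrac{1}{n}\mathbf{X}^{\top}\mathbf{X} \succeq \lambda_{\min}\mathbf{I}$, and then spends most of the effort checking the smoothness Assumption~\ref{smoothness}, which in turn requires first proving that $\|\theta_{1,n}^*\|$ is uniformly bounded. You instead exploit the closed form $\hat{\theta}_{1,n} = \hat{\mathbf{A}}_n^{-1}\hat{\mathbf{b}}_n$ directly, apply Lemma~\ref{wlln} coordinatewise, and finish with the resolvent identity $\hat{\mathbf{A}}_n^{-1} - \mathbf{A}_n^{-1} = \hat{\mathbf{A}}_n^{-1}(\mathbf{A}_n - \hat{\mathbf{A}}_n)\mathbf{A}_n^{-1}$ together with the uniform eigenvalue floor. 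Your route is shorter and avoids the general framework entirely, at the cost of being specific to least squares; the paper's route is longer here but illustrates how the same template handles logistic and Poisson regression, where no closed form is available. Your diagnosis of the one nontrivial point---that the absence of a fixed limiting Gram matrix forces you to carry the deterministic bound $\lambda_{\min}$ through rather than invoke the continuous mapping theorem---is exactly right.
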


\begin{proof}
We will use Theorem \ref{general_parametric_models}.  The loss function is $\ell(\theta, \mathbf{x}, y) = [\log(y) - \theta^{\top} \mathbf{x}]^2$, which is convex in $\theta$.  The existence of $\theta_{1,n}^*$ is guaranteed by the assumption that $\tfrac{1}{n} \mathbf{X}^{\top} \mathbf{X}$ is invertible, and the growth condition is verified by the assumption that $\nabla^2 \L_n(\theta) = \tfrac{1}{n} \mathbf{X}^{\top} \mathbf{X} \succeq \lambda_{\min} \mathbf{I}_{d \times d}$.  To check Assumption \ref{smoothness}, we need to do some calculations.  First, we show that $|| \theta_{1,n}^* ||$ is bounded.  Let $L$ be a uniform bound on $|| \mathbf{x}_i ||^2$ and $\tfrac{1}{n} \sum_{i = 1}^n \log(y_{1i})^2$.  Then we have:
\begin{align*}
|| \theta_{1,n}^* || &= || (\mathbf{X}^{\top} \mathbf{X})^{-1} \mathbf{X}^{\top} \log(\mathbf{y}_1) || \leq \lambda_{\min}^{-1} \left( \frac{1}{n^2} \sum_{j = 1}^d [\mathbf{X}_{\bullet j} \log(\mathbf{y}_1)]^2 \right)^{1/2} \leq L / \lambda_{\min}
\end{align*}
Then for any $\theta, \phi \in \mathbb{B}_1(\theta_{1,n}^*)$, the mean-value theorem allows us to write:
\begin{align*}
\frac{1}{n} \sum_{i = 1}^n [ \ell(\theta, \mathbf{x}_i, y_{1i})) - \ell(\phi, \mathbf{x}_i, y_{1i}) ]^2 &= \frac{1}{n} \sum_{i = 1}^n [ 2( \log y_{1i} - \bar{\theta}_i^{\top} \mathbf{x}_i) \mathbf{x}_i^{\top} (\theta - \phi)]^2\\
&\leq 4 L \left( \frac{1}{n} \sum_{i = 1}^n [ \log y_{1i} - \bar{\theta}_i^{\top} \mathbf{x}_i]^2 \right)  || \theta - \phi ||^2\\
&\leq 8 L \left( \frac{1}{n} \sum_{i = 1}^n (\log y_{1i})^2 + \frac{1}{n} \sum_{i = 1}^n L || \bar{\theta}_i ||^2 \right) || \theta - \phi ||^2\\
&\leq 8 L [ L + L (L/\lambda_{\min} + 1)^2] \cdot || \theta - \phi ||^2
\end{align*}
Hence, all the assumptions of Theorem \ref{general_parametric_models} are satisifed, so $|| \hat{\theta}_{1,n} - \theta_{1,n}^* || \xrightarrow{p} 0$.
\end{proof}

\begin{lemma}
\label{exponentiated_stability}
\textup{\textbf{(Stability of the exponentiated model)}}\\
We have $|| \exp( \langle \hat{\theta}_{1,n} , \cdot \rangle) - \exp( \langle \theta_{1,n}^*, \rangle) ||_n \xrightarrow{p} 0$.
\end{lemma}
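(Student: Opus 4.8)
The plan is to leverage the parameter consistency $|| \hat{\theta}_{1,n} - \theta_{1,n}^* || \xrightarrow{p} 0$ already established in Lemma \ref{log_ols_consistency} and upgrade it to stability of the exponentiated model via a Lipschitz-type bound that is valid on a high-probability event.  The one subtlety is that $\mathbf{x} \mapsto \exp(\langle \theta, \mathbf{x} \rangle)$ is not globally Lipschitz in $\theta$, so the argument has to be localized; this is exactly where the a priori bound on $|| \theta_{1,n}^* ||$ from the proof of Lemma \ref{log_ols_consistency} does the work.

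First I would record two consequences of that proof: (i) $|| \hat{\theta}_{1,n} - \theta_{1,n}^* || \xrightarrow{p} 0$, and (ii) $|| \theta_{1,n}^* || \leq L / \lambda_{\min}$, where $L$ is the uniform bound on $|| \mathbf{x}_i ||^2$.  Since $|| \mathbf{x}_i || \leq \sqrt{L}$, this gives the uniform-in-$n$ bound $| \theta_{1,n}^{* \top} \mathbf{x}_i | \leq L^{3/2} / \lambda_{\min} =: c_0$ for all $i$.

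Next I would work on the event $A_n = \{ || \hat{\theta}_{1,n} - \theta_{1,n}^* || \leq 1 \}$, which has probability tending to one.  On $A_n$, $| \hat{\theta}_{1,n}^{\top} \mathbf{x}_i - \theta_{1,n}^{* \top} \mathbf{x}_i | \leq || \mathbf{x}_i || \leq \sqrt{L}$, so both linear predictors lie in $[-(c_0 + \sqrt{L}), c_0 + \sqrt{L}]$.  Applying the mean value theorem unit by unit, $| \exp( \hat{\theta}_{1,n}^{\top} \mathbf{x}_i) - \exp( \theta_{1,n}^{* \top} \mathbf{x}_i) | \leq e^{c_0 + \sqrt{L}} \, | (\hat{\theta}_{1,n} - \theta_{1,n}^*)^{\top} \mathbf{x}_i |$; squaring, averaging over $i$, and using Cauchy--Schwarz together with $|| \mathbf{x}_i ||^2 \leq L$ yields, on $A_n$, the bound $|| \exp(\langle \hat{\theta}_{1,n}, \cdot \rangle) - \exp( \langle \theta_{1,n}^*, \cdot \rangle) ||_n^2 \leq e^{2(c_0 + \sqrt{L})} L \, || \hat{\theta}_{1,n} - \theta_{1,n}^* ||^2$.

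Finally, since the right-hand side is $o_p(1)$ by (i) and $\P(A_n) \to 1$, the usual truncation argument (control $|| \exp(\langle \hat{\theta}_{1,n}, \cdot \rangle) - \exp( \langle \theta_{1,n}^*, \cdot \rangle) ||_n$ by the preceding bound on $A_n$ and note that $A_n^c$ is negligible) gives $|| \exp(\langle \hat{\theta}_{1,n}, \cdot \rangle) - \exp( \langle \theta_{1,n}^*, \cdot \rangle) ||_n \xrightarrow{p} 0$.  There is no substantial obstacle; the only thing requiring care is that the exponential prefactor $e^{c_0 + \sqrt{L}}$ be uniform in $n$, which is precisely what the bound $|| \theta_{1,n}^* || \leq L / \lambda_{\min}$ guarantees.
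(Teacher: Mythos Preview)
Your proposal is correct and follows essentially the same argument as the paper: localize to the high-probability event where $\hat{\theta}_{1,n}$ is within distance one of $\theta_{1,n}^*$, use the bound $||\theta_{1,n}^*|| \leq L/\lambda_{\min}$ together with $||\mathbf{x}_i|| \leq \sqrt{L}$ to get a uniform bound on both linear predictors, and then apply the local Lipschitz property of the exponential. In fact your constant $c_0 + \sqrt{L} = L^{3/2}/\lambda_{\min} + \sqrt{L} = \sqrt{L}(L/\lambda_{\min} + 1)$ coincides with the paper's $L'$, and your explicit treatment of the truncation step is slightly more careful than the paper's terse ``$+\,o_p(1)$''.
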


\begin{proof}
We use the fact that $|e^a - e^b| \leq e^c | a - b|$ if $|a|, |b| \leq c$.  Since with probability tending to one, $| \hat{\theta}_{1,n}^{\top} \mathbf{x}_i| \leq \sqrt{L} (L / \lambda_{\min} + 1) \equiv L'$ for all $i$, we may write:
\begin{align*}
\frac{1}{n} \sum_{i = 1}^n [ \exp( \hat{\theta}_{1,n}^{\top} \mathbf{x}_i) - \exp( \theta_{1,n}^{* \top} \mathbf{x}_i)]^2 &\leq \left( \frac{1}{n} \sum_{i = 1}^n e^{2L'} || \mathbf{x}_i ||^2 \right) || \hat{\theta}_{1,n} - \theta_{1,n}^* ||^2 + o_p(1)
\end{align*}
The upper bound is vanishing because $|| \mathbf{x}_i ||^2$ is bounded while $|| \hat{\theta}_{1,n} - \theta_{1,n}^* ||$ tends to zero by Lemma \ref{log_ols_consistency}.
\end{proof}

\begin{lemma}
\label{debiased_intercept_consistency}
\textup{\textbf{(Consistency of the debiased intercept)}}\\
Define $a_n^* = \tfrac{1}{n} \sum_{i = 1}^n [ \exp(\theta_{1,n}^{* \top} \mathbf{x}_i) - y_{1i}]$.  Define $\hat{a}_n = \tfrac{1}{n_1} \sum_{Z_i = 1} [ \exp(\hat{\theta}_{1,n}^{\top} \mathbf{x}_i) - y_{1i}]$.  Then $| \hat{a}_n - a_n^* | \xrightarrow{p} 0$.
\end{lemma}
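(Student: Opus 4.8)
The plan is to split $\hat a_n - a_n^*$ into an ``estimation error'' and a ``sampling error'' via the triangle inequality. Introduce the intermediate quantity
\[
\tilde a_n \;:=\; \frac{1}{n_1} \sum_{Z_i = 1} \bigl[ \exp(\theta_{1,n}^{*\top}\mathbf{x}_i) - y_{1i} \bigr],
\]
which is $\hat a_n$ with the fitted coefficient $\hat\theta_{1,n}$ replaced by its population counterpart $\theta_{1,n}^*$. Then $|\hat a_n - a_n^*| \le |\hat a_n - \tilde a_n| + |\tilde a_n - a_n^*|$, and I would bound the two terms separately.

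For the estimation error, I would use
\[
|\hat a_n - \tilde a_n| \;\le\; \frac{1}{n_1} \sum_{Z_i = 1} \bigl| \exp(\hat\theta_{1,n}^{\top}\mathbf{x}_i) - \exp(\theta_{1,n}^{*\top}\mathbf{x}_i) \bigr| \;\le\; \frac{1}{p_{\min}} \cdot \frac{1}{n} \sum_{i=1}^n \bigl| \exp(\hat\theta_{1,n}^{\top}\mathbf{x}_i) - \exp(\theta_{1,n}^{*\top}\mathbf{x}_i) \bigr|,
\]
and then Cauchy--Schwarz (equivalently Jensen) turns the $\ell_1$-average on the right into $\| \exp(\langle \hat\theta_{1,n}, \cdot \rangle) - \exp(\langle \theta_{1,n}^*, \cdot \rangle) \|_n$, which is $o_p(1)$ by Lemma \ref{exponentiated_stability}. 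Hence $|\hat a_n - \tilde a_n| \xrightarrow{p} 0$.

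For the sampling error, I would apply the randomization law of large numbers (Lemma \ref{wlln}) to the fixed sequence $a_{i,n} := \exp(\theta_{1,n}^{*\top}\mathbf{x}_i) - y_{1i}$, noting that its population mean $\bar a_n$ is exactly $a_n^*$ and its sample-over-treated mean is exactly $\tilde a_n$. The only hypothesis to verify is $\tfrac1n \sum_{i=1}^n a_{i,n}^2 = o(n)$; here I would use $\tfrac1n\sum_i a_{i,n}^2 \le 2\tfrac1n\sum_i \exp(2\theta_{1,n}^{*\top}\mathbf{x}_i) + 2\tfrac1n\sum_i y_{1i}^2$, observe that $\|\theta_{1,n}^*\| \le L/\lambda_{\min}$ is bounded (shown inside the proof of Lemma \ref{log_ols_consistency}) and $\|\mathbf{x}_i\|$ is uniformly bounded, so the exponential term is $O(1)$, and combine this with the standing assumption that $\tfrac1n\sum_i y_{1i}^2$ is bounded. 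This gives $\tfrac1n\sum_i a_{i,n}^2 = O(1) = o(n)$, so Lemma \ref{wlln} yields $|\tilde a_n - a_n^*| \xrightarrow{p} 0$. Adding the two bounds completes the proof. I do not anticipate any genuine obstacle; the only point requiring a moment's care is checking the second-moment condition for Lemma \ref{wlln}, and that reduces directly to boundedness facts already established.
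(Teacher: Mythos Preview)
Your proposal is correct and follows essentially the same approach as the paper: the same two-term decomposition into an estimation error (handled via Jensen/Cauchy--Schwarz and Lemma~\ref{exponentiated_stability}) and a sampling error (handled via Lemma~\ref{wlln} after checking the second-moment condition using boundedness of $\theta_{1,n}^*$, $\|\mathbf{x}_i\|$, and $\tfrac1n\sum_i y_{1i}^2$).
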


\begin{proof}
We may upper bound $|a_n^* - \hat{a}_n|$ by the sum of two terms.
\begin{align*}
| a_n^* - \hat{a}_n | &\leq \left| \frac{1}{n_1} \sum_{Z_i = 1} [ \exp( \hat{\theta}_{1,n}^{\top} \mathbf{x}_i) - \exp( \theta_{1,n}^{* \top} \mathbf{x}_i)] \right|\\
&+ \left| \frac{1}{n_1} \sum_{Z_i = 1} [ \exp(\theta_{1,n}^{* \top} \mathbf{x}_i) - y_{1i}] - \frac{1}{n} \sum_{i = 1}^n [ \exp(\theta_{1,n}^{* \top} \mathbf{x}_i) - y_{1i}] \right| 
\end{align*}
The first of these terms tends to zero by Jensen's inequality and Lemma \ref{exponentiated_stability}.  The second term tends to zero by the completely randomized law of large numbers (Lemma \ref{wlln}).  The conditions needed to use the completely randomized law of large numbers are satisfied because $\tfrac{1}{n} \sum_{i = 1}^n [ \exp(\theta_{1,n}^{* \top} \mathbf{x}_i) - y_{1i}]^2 \leq \tfrac{2}{n} \sum_{i = 1}^n (\exp(\theta_{1,n}^{* \top} \mathbf{x}_i))^2 + \tfrac{2}{n} \sum_{i = 1}^n y_{1i}^2$.  Since $\theta_{1,n}^*$ and $|| \mathbf{x}_i ||$ are bounded (see the proof of Lemma \ref{log_ols_consistency}), the first term in this sum is $\mathcal{O}(1)$.  The second term is bounded by the assumption of Theorem \ref{log_transform}.
\end{proof}

\begin{lemma}
\label{second_stage_ols_consistency}
\textup{\textbf{(Consistency of second-stage OLS coefficients)}}\\
Define the following quantities:
\begin{align*}
\hat{\beta}_n \equiv (\hat{\beta}_{0,n}, \hat{\beta}_{1,n}) &= \argmin_{(\beta_0, \beta_1)} \sum_{Z_i = 1} (y_{1i} - [\beta_0 + \beta_1 \exp( \hat{\theta}_{1,n}^{\top} \mathbf{x}_i)])^2\\
\beta_n^* \equiv (\beta_{0,n}^*, \beta_{1,n}^*) &= \argmin_{(\beta_0, \beta_1)} \sum_{i = 1}^n (y_{1i} - [ \beta_0 + \beta_1 \exp( \theta_{1,n}^{* \top} \mathbf{x}_i)])^2
\end{align*}
Then $|| \hat{\beta}_n - \beta_n^* || \xrightarrow{p} 0$.
\end{lemma}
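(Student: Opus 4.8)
The plan is to use the closed form of the least-squares solution and reduce the claim to two ingredients already available: the stability of the exponentiated model (Lemma \ref{exponentiated_stability}) and the completely randomized law of large numbers (Lemma \ref{wlln}). Write $\hat{m}_i := \exp(\hat{\theta}_{1,n}^{\top}\mathbf{x}_i)$, $m_i^* := \exp(\theta_{1,n}^{*\top}\mathbf{x}_i)$, $\hat{\mathbf{v}}_i := (1,\hat{m}_i)^{\top}$, and $\mathbf{v}_i^* := (1,m_i^*)^{\top}$. Then $\hat{\beta}_n = \hat{\mathbf{A}}_n^{-1}\hat{\mathbf{b}}_n$ and $\beta_n^* = \mathbf{A}_n^{*-1}\mathbf{b}_n^*$, where $\hat{\mathbf{A}}_n = \tfrac{1}{n_1}\sum_{Z_i=1}\hat{\mathbf{v}}_i\hat{\mathbf{v}}_i^{\top}$, $\hat{\mathbf{b}}_n = \tfrac{1}{n_1}\sum_{Z_i=1}\hat{\mathbf{v}}_i y_{1i}$, $\mathbf{A}_n^* = \tfrac{1}{n}\sum_{i=1}^n\mathbf{v}_i^*\mathbf{v}_i^{*\top}$, and $\mathbf{b}_n^* = \tfrac{1}{n}\sum_{i=1}^n\mathbf{v}_i^* y_{1i}$. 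Note that $\mathbf{A}_n^*$ is invertible exactly when the empirical variance of the $m_i^*$ is nonzero; for $\beta_n^*$ to be a well-defined unique minimizer we take as a standing non-degeneracy condition that $\lambda_{\min}(\mathbf{A}_n^*)\geq c>0$ for all large $n$ (the natural analogue of the design-matrix condition $\tfrac{1}{n}\mathbf{X}^{\top}\mathbf{X}\succeq\lambda_{\min}\mathbf{I}$ already imposed on the first stage).

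First I would record the a priori bounds. The proof of Lemma \ref{log_ols_consistency} shows $\|\theta_{1,n}^*\|$ is bounded; combined with the boundedness of $\|\mathbf{x}_i\|$ this yields $0\leq m_i^*\leq \bar{C}$ for a constant $\bar{C}$ not depending on $n$ or $i$. By Lemma \ref{log_ols_consistency} we also have $\|\hat{\theta}_{1,n}-\theta_{1,n}^*\|\xrightarrow{p}0$, so on an event of probability tending to one $\|\hat{\theta}_{1,n}\|$ is bounded and hence $0\leq\hat{m}_i\leq 2\bar{C}$ for all $i$ simultaneously. Using in addition the hypothesis that $\tfrac{1}{n}\sum_{i=1}^n y_{1i}^2$ is bounded, the arrays $m_i^*$, $(m_i^*)^2$, $m_i^* y_{1i}$, and $y_{1i}$ all have squared empirical averages that are $\mathcal{O}(1)=o(n)$, so Lemma \ref{wlln} may be applied to each of them.

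Next I would prove $\hat{\mathbf{A}}_n-\mathbf{A}_n^*\xrightarrow{p}0$ and $\hat{\mathbf{b}}_n-\mathbf{b}_n^*\xrightarrow{p}0$ entrywise. Each difference decomposes as (i) replacing $\hat{m}_i$ by $m_i^*$ inside the treated-group average, plus (ii) replacing the treated-group average $\tfrac{1}{n_1}\sum_{Z_i=1}$ by the full-population average $\tfrac{1}{n}\sum_{i=1}^n$. For (i), a representative term is bounded via $\bigl|\tfrac{1}{n_1}\sum_{Z_i=1}(\hat{m}_i^2-(m_i^*)^2)\bigr|\leq 3\bar{C}\bigl(\tfrac{1}{n_1}\sum_{Z_i=1}(\hat{m}_i-m_i^*)^2\bigr)^{1/2}$, and $\tfrac{1}{n_1}\sum_{Z_i=1}(\hat{m}_i-m_i^*)^2\leq p_{\min}^{-1}\cdot\tfrac{1}{n}\sum_{i=1}^n(\hat{m}_i-m_i^*)^2\xrightarrow{p}0$ by Lemma \ref{exponentiated_stability}; the terms involving $y_{1i}$ are handled the same way after Cauchy--Schwarz against $\tfrac{1}{n_1}\sum_{Z_i=1} y_{1i}^2\leq p_{\min}^{-1}\tfrac{1}{n}\sum y_{1i}^2=\mathcal{O}(1)$. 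For (ii), each of the arrays in question satisfies the hypotheses of Lemma \ref{wlln} by the previous paragraph, so the treated-group average converges to the full-population average in probability. Then the continuous mapping theorem finishes the argument: on the (asymptotically certain) event $\|\hat{\mathbf{A}}_n-\mathbf{A}_n^*\|_{\mathsf{op}}\leq c/2$ we get $\lambda_{\min}(\hat{\mathbf{A}}_n)\geq c/2$, hence $\|\hat{\mathbf{A}}_n^{-1}-\mathbf{A}_n^{*-1}\|_{\mathsf{op}}\leq (2/c^2)\|\hat{\mathbf{A}}_n-\mathbf{A}_n^*\|_{\mathsf{op}}\xrightarrow{p}0$, and combining with $\hat{\mathbf{b}}_n\xrightarrow{p}\mathbf{b}_n^*$ and the boundedness of $\|\mathbf{b}_n^*\|$ (again by Cauchy--Schwarz and $\tfrac{1}{n}\sum y_{1i}^2=\mathcal{O}(1)$) gives $\|\hat{\beta}_n-\beta_n^*\|\xrightarrow{p}0$.

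The main obstacle is not any individual estimate --- each is routine given Lemmas \ref{exponentiated_stability} and \ref{wlln} --- but the non-degeneracy of $\mathbf{A}_n^*$: if the first-stage fit $\theta_{1,n}^{*\top}\mathbf{x}_i$ becomes asymptotically constant in $i$, the second-stage regressor collapses, $\beta_n^*$ is no longer uniquely determined, and the slope $\hat{\beta}_{1,n}$ need not converge. One must therefore either carry the assumption $\lambda_{\min}(\mathbf{A}_n^*)\geq c>0$, or argue separately in that degenerate regime that $\hat{\mu}_{1,n}^{\mathsf{ols2}}$ is still close in $\|\cdot\|_n$ to a constant model --- which is all that Theorem \ref{asymptotic_linearity} ultimately needs from this lemma.
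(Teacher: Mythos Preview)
Your proposal is correct and follows essentially the same skeleton as the paper's proof: write the least-squares solution in closed form, show each sample moment converges to its population counterpart by splitting into (i) ``replace $\hat m_i$ by $m_i^*$'' via Lemma~\ref{exponentiated_stability} plus Cauchy--Schwarz, then (ii) ``replace the treated-group average by the population average'' via Lemma~\ref{wlln}, and conclude by continuity of inversion. The paper carries this out with the scalar ratio expressions for $\hat\beta_{1,n}$ and $\hat\beta_{0,n}$ separately rather than your $2\times 2$ matrix form $\hat{\mathbf A}_n^{-1}\hat{\mathbf b}_n$, but that is cosmetic.

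The one substantive difference is exactly the non-degeneracy issue you flag. You impose $\lambda_{\min}(\mathbf A_n^*)\geq c>0$ as an additional standing hypothesis, whereas the paper attempts to \emph{derive} a lower bound from the existing assumptions: using Jensen's inequality and the first-order condition for $\theta_{1,n}^*$ (the intercept equation on the log scale), it argues that $\tfrac{1}{n}\sum_i[\exp(\theta_{1,n}^{*\top}\mathbf x_i)]^2$ is bounded below by $\exp\bigl(-[\tfrac{1}{n}\sum_i(\log y_{1i})^2]^{1/2}\bigr)$, which is controlled because $\tfrac{1}{n}\sum(\log y_{1i})^4$ is assumed bounded. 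That said, the paper's displayed ratio formulas for $\hat\beta_{1,n}$ and $\beta_{1,n}^*$ are actually those of a regression \emph{through the origin}, so its Jensen bound controls only the $(2,2)$-entry of $\mathbf A_n^*$ and not the empirical \emph{variance} of the $m_i^*$ that governs $\lambda_{\min}(\mathbf A_n^*)$. Your caution here is therefore well placed, and the escape hatch you propose---arguing directly that in the degenerate regime $\hat\mu_{1,n}^{\mathsf{ols2}}$ collapses to a constant fit, which already suffices for stability and hence for Theorem~\ref{asymptotic_linearity}---is arguably the cleanest way to close this gap in either argument.
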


\begin{proof}
We start by proving that $| \hat{\beta}_{1,n} - \beta_{1,n}^* | \xrightarrow{p} 0$.  Both can be written explicitly:
\begin{align}
    \hat{\beta}_{1,n} &= \frac{\tfrac{1}{n_1} \sum_{Z_i = 1} y_{1i} \exp( \hat{\theta}_{1,n}^{\top} \mathbf{x}_i)}{\tfrac{1}{n_1} \sum_{Z_i = 1} [\exp( \hat{\theta}_{1,n}^{\top} \mathbf{x}_i)]^2} \label{beta1_hat}\\
    \beta_{1,n}^* &= \frac{\tfrac{1}{n} \sum_{i = 1}^n y_{1i} \exp(\theta_{1,n}^{* \top} \mathbf{x}_i)}{\tfrac{1}{n} \sum_{i = 1}^n [\exp(\theta_{1,n}^{* \top}  \mathbf{x}_i)]^2} \label{beta1_star}
\end{align}
The difference between the numerator of (\ref{beta1_hat}) and the numerator of (\ref{beta1_star}) is converging to zero.  To prove this, we first add and subtract a term, then apply the triangle inequality.
\begin{align*}
 \left| \frac{1}{n_1} \sum_{Z_i = 1} y_{1i} e^{\hat{\theta}_{1,n}^{\top} \mathbf{x}_i} - \frac{1}{n} \sum_{i = 1}^n y_{1i} e^{\theta_{1,n}^{* \top} \mathbf{x}_i} \right| &\leq \underbrace{\left| \frac{1}{n_1} \sum_{Z_i = 1} y_{1i} \left(  e^{\hat{\theta}_{1,n}^{\top} \mathbf{x}_i} - e^{\theta_{1,n}^{* \top} \mathbf{x}_i} \right) \right|}_{\text{(a)}}\\
 &+ \underbrace{\left| \frac{1}{n_1} \sum_{Z_i = 1} y_{1i} e^{\theta_{1,n}^{* \top} \mathbf{x}_i} - \frac{1}{n} \sum_{i = 1}^n y_{1i} e^{\theta_{1,n}^{* \top} \mathbf{x}_i} \right|}_{\text{(b)}}
\end{align*}
The term (a) is less than $(\tfrac{1}{n_1} \sum_{i = 1}^n y_{1i}^2)^{1/2} ( \tfrac{1}{n_1} \sum_{i = 1}^n [ \exp( \hat{\theta}_{1,n}^{\top} \mathbf{x}_i) - \exp( \theta_{1,n}^{* \top} \mathbf{x}_i)]^2)^{1/2}$.  Since $\tfrac{1}{n_1} \sum_{i = 1}^n y_{1i}^2$ is bounded by the assumptions of Theorem \ref{log_transform} and $\tfrac{1}{n_1} \sum_{i = 1}^n [ \exp( \hat{\theta}_{1,n}^{\top} \mathbf{x}_i) - \exp(\theta_{1,n}^{* \top} \mathbf{x}_i)]^2 \rightarrow_p 0$ by Lemma \ref{exponentiated_stability}, the term (a) is vanishing in probability.  The term (b) is vanishing by the completely randomized law of large numbers (Lemma \ref{wlln}).  The assumptions needed to use the completely randomized law of large numbers are satisfied because $\exp( \theta_{1,n}^{* \top} \mathbf{x}_i)$ is bounded (see the proof of Lemma \ref{log_ols_consistency}) and so is $\tfrac{1}{n} \sum_{i = 1}^n y_{1i}^2$.

By exactly the same argument, the difference between the denominators of (\ref{beta1_hat}) and (\ref{beta1_star}) can also be seen to be vanishing.  This is enough to prove $| \hat{\beta}_{1,n} - \beta_{1,n}^*| \xrightarrow{p} 0$, since the denominator in (\ref{beta1_star}) is bounded away from zero.
\begin{align*}
\frac{1}{n} \sum_{i = 1}^n [ \exp(\theta_{1,n}^{* \top} \mathbf{x}_i)]^2 &\geq \exp \left( \frac{1}{n} \sum_{i = 1}^n \theta_{1,n}^{* \top} \mathbf{x}_i \right) = \exp \left( \frac{1}{n} \sum_{i = 1}^n y_{1i} \right) \geq \exp \left( - \left[ \frac{1}{n} \sum_{i = 1}^n (\log y_{1i})^2 \right]^{1/2} \right)
\end{align*}
Since $\tfrac{1}{n} \sum_{i = 1}^n (\log y_{1i})^2$ is bounded above, the lower bound is bounded below.

Next, we prove that $| \hat{\beta}_{0,n} - \beta_{0,n}^* | \xrightarrow{p} 0$.  Again, both can be written explicitly:
\begin{align}
\hat{\beta}_{0,n} &= \frac{1}{n_1} \sum_{Z_i = 1} [ y_{1i} - \hat{\beta}_{1,n} \exp( \hat{\theta}_{1,n}^{\top} \mathbf{x}_i)] \label{beta0_hat}\\
\beta_{0,n}^* &= \frac{1}{n} \sum_{i = 1}^n [ y_{1i} - \beta_{1,n}^* \exp(\theta_{1,n}^{* \top} \mathbf{x}_i)] \label{beta0_star}
\end{align}
The difference between these two is bounded by the sum of of three terms:
\begin{align*}
| \hat{\beta}_{0,n} - \beta_{0,n}^*| &\leq
\underbrace{\left| \frac{1}{n_1} \sum_{Z_i = 1} y_{1i} - \frac{1}{n} \sum_{i = 1}^n y_{1i} \right|}_{\text{(i)}}\\
&+ \underbrace{\left| \frac{1}{n_1} \sum_{Z_i = 1} [\hat{\beta}_{1,n}
 \exp( \hat{\theta}_{1,n}^{\top} \mathbf{x}_i) - \beta_{1,n}^* \exp(\theta_{1,n}^{* \top} \mathbf{x}_i) \right|}_{\text{(ii)}}\\
 &+ \underbrace{\left| \frac{1}{n_1} \sum_{Z_i = 1} \beta_{1,n}^* \exp(\theta_{1,n}^{* \top} \mathbf{x}_i) - \frac{1}{n} \sum_{i = 1}^n \beta_{1,n}^* \exp(\theta_{1,n}^{* \top} \mathbf{x}_i) \right|}_{\text{(iii)}}
 \end{align*}
 The first and third terms are $o_p(1)$ by the completely randomized law of large numbers (to verify the assumptions on (iii), use the fact that the denominator of $\beta_{1,n}^*$ is bounded below and also $\exp(\theta_{1,n}^{* \top} \mathbf{x}_i)$ is bounded).  To kill the second term, add and subtract $\hat{\beta}_{1,n} \exp(\theta_{1,n}^{* \top} \mathbf{x}_i)$ and then use the Cauchy-Schwarz inequality along with Lemma \ref{exponentiated_stability} and the conclusion $| \hat{\beta}_{1,n} - \beta_{1,n}^* | \xrightarrow{p} 0$.
\end{proof}

\begin{lemma}
\label{smoothness_of_transformed_outcome}
\textup{\textbf{(Smoothness of debiased and post-OLS models)}}\\
Let $\phi = (\beta_0, \beta_1, \theta)$.  Define $\mu_{\phi}(\mathbf{x}) = \beta_0 + \beta_1 \exp(\theta^{\top} \mathbf{x})$.  Then for all $\phi, \phi'$ with $|| \phi ||_{\infty}, || \phi' ||_{\infty} \leq B$, we have:
\begin{align*}
|| \mu_{\phi} - \mu_{\phi'} ||_n \leq M || \phi - \phi' ||
\end{align*}
for some $M$ not depending on $n$.
\end{lemma}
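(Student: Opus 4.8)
The plan is to establish the stronger, \emph{pointwise} Lipschitz bound $|\mu_\phi(\mathbf{x}_i) - \mu_{\phi'}(\mathbf{x}_i)| \le M \|\phi - \phi'\|$ holding uniformly over $i$, with $M$ depending only on $B$, the dimension $d$, and a uniform bound $L$ on $\|\mathbf{x}_i\|$ (which exists under the hypotheses of Theorem \ref{log_transform}). Once this is in place, the claimed bound on $\|\cdot\|_n$ is immediate, since $\|\mu_\phi - \mu_{\phi'}\|_n^2 = \tfrac{1}{n} \sum_{i = 1}^n [\mu_\phi(\mathbf{x}_i) - \mu_{\phi'}(\mathbf{x}_i)]^2 \le M^2 \|\phi - \phi'\|^2$, and taking square roots gives the same constant $M$.

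To get the pointwise estimate, I would write $\phi = (\beta_0, \beta_1, \theta)$, $\phi' = (\beta_0', \beta_1', \theta')$, and telescope:
\begin{align*}
\mu_\phi(\mathbf{x}) - \mu_{\phi'}(\mathbf{x}) = (\beta_0 - \beta_0') + \beta_1 \big( e^{\theta^\top \mathbf{x}} - e^{\theta'^\top \mathbf{x}} \big) + (\beta_1 - \beta_1') e^{\theta'^\top \mathbf{x}}.
\end{align*}
Then I would control each summand. Because $\|\theta\|_\infty, \|\theta'\|_\infty \le B$ and $\|\mathbf{x}_i\| \le L$, the linear forms satisfy $|\theta^\top \mathbf{x}_i|, |\theta'^\top \mathbf{x}_i| \le d B L$, so $e^{\theta'^\top \mathbf{x}_i} \le e^{d B L}$, a constant not depending on $n$ or $i$. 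For the middle term I would apply the mean value theorem to $t \mapsto e^t$: there is $c_i$ between $\theta^\top \mathbf{x}_i$ and $\theta'^\top \mathbf{x}_i$ (hence $|c_i| \le d B L$) with $|e^{\theta^\top \mathbf{x}_i} - e^{\theta'^\top \mathbf{x}_i}| = e^{c_i} |(\theta - \theta')^\top \mathbf{x}_i| \le e^{d B L} L \|\theta - \theta'\|$ by Cauchy--Schwarz. Using $|\beta_1| \le B$ and $|\beta_0 - \beta_0'|, |\beta_1 - \beta_1'|, \|\theta - \theta'\| \le \|\phi - \phi'\|$, the three summands combine to give $|\mu_\phi(\mathbf{x}_i) - \mu_{\phi'}(\mathbf{x}_i)| \le \big( 1 + B L e^{d B L} + e^{d B L} \big) \|\phi - \phi'\|$, so $M = 1 + (B L + 1) e^{d B L}$ works.

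There is no substantive obstacle here. The only points requiring a little care are (i) ensuring that the exponential factors arising from the telescoping are bounded by constants independent of $n$ and $i$ --- which holds because $\mathbf{x}_i$ is uniformly bounded and $\phi, \phi'$ lie in a fixed $\ell_\infty$-ball --- and (ii) noticing that the telescoping introduces the \emph{bounded} coefficient $\beta_1$ rather than the coefficient being differenced. Both are handled by the stated hypotheses, and passing from the uniform pointwise bound to the $\|\cdot\|_n$ bound is just monotonicity of the average.
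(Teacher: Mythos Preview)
Your proof is correct and follows essentially the same route as the paper: a three-term telescoping of $\mu_\phi - \mu_{\phi'}$ (in $\beta_0$, $\beta_1$, and $\theta$), combined with the local Lipschitz property of the exponential on the compact set where $|\theta^\top \mathbf{x}_i|$ is confined. The only organizational difference is that you establish a uniform pointwise bound and then average, whereas the paper applies the triangle inequality directly at the level of $\|\cdot\|_n$ and then reassembles the coordinate differences into $\|\phi - \phi'\|$ via $\sqrt{a}+\sqrt{b}\le\sqrt{2(a+b)}$; your ordering is marginally cleaner but not substantively different.
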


\begin{proof}
First, we use the triangle inequality to split up $|| \mu_{\phi} - \mu_{\phi'} ||_n$ into more easily manageable terms.
\begin{align*}
|| \mu_{\phi} - \mu_{\phi'} ||_n \leq | \beta_0 - \beta_0'| + || \mu_{0, \beta_1, \theta} - \mu_{0, \beta_1', \theta} ||_n + || \mu_{0, \beta_1', \theta} - \mu_{0, \beta_1', \theta'} ||_n
\end{align*}
To analyze the second term, let $L < \infty$ be a uniform bound on $|| \mathbf{x}_i ||_1$.
\begin{align*}
|| \mu_{0, \beta_1, \theta} - \mu_{0, \beta_1', \theta} ||_n &= \left( \frac{1}{n} \sum_{i = 1}^n (\beta_1 - \beta_1')^2 \exp({\theta^{\top} \mathbf{x}_i})^2 \right)^{1/2} \leq \exp( 2 L B) | \beta_1 - \beta_1'|
\end{align*}
To analyze the third term, we use the local Lipschitz property of the exponential function.
\begin{align*}
|| \mu_{0, \beta_1', \theta} - \mu_{0, \beta_1', \theta'} ||_n &= \left( \frac{1}{n} \sum_{i = 1}^n (\beta_1')^2 [ \exp( \theta^{\top} \mathbf{x}_i) - \exp( \theta^{' \top} \mathbf{x}_i)]^2 \right)^{1/2}\\
&\leq B (e^{LB}) | \theta^{\top} \mathbf{x}_i - \theta^{' \top} \mathbf{x}_i |\\
&\leq LB e^{LB}  \cdot || \theta - \theta' ||
\end{align*}
Putting things together gives $|| \mu_{\phi} - \mu_{\phi'} ||_n \leq | \beta_0 - \beta_0' | + M_1 | \beta_1 - \beta_1' | + M_2 || \theta - \theta' ||$ for some $M_1, M_2$.  Then if we set $M = 2 \max \{ 1, M_1, M_2 \}$ we have:
\begin{align*}
|| \mu_{\theta} - \mu_{\theta'} ||_n &\leq \max \{ 1, M_1, M_2 \} \left( | \beta_0 - \beta_0'| + | \beta_1 - \beta_1'| + \left( \sum_{j = 1}^d (\theta_j - \theta_j')^2 \right)^{1/2}\right)\\
&\leq 2 \max \{ 1, M_1, M_2 \}  \left( | \beta_0 - \beta_0'|^2 + | \beta_1 - \beta_1'|^2 + \sum_{j = 1}^d (\theta_j - \theta_j')^2 \right)^{1/2}\\
&= M || \phi - \phi' ||
\end{align*}
where we used the inequality $\sqrt{a} + \sqrt{b} \leq \sqrt{2} \sqrt{a + b}$ twice.
\end{proof}

\begin{proposition}
\textup{\textbf{(Proof for the debiased estimator)}}\\
The sequence of debiased estimators $\{ \hat{\mu}_{1,n}^{\mathsf{db}} \}$ is stable and has typically simple realizations.
\end{proposition}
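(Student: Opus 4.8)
The plan is to exhibit the natural population target and then check the two required properties — stability and typically simple realizations — by assembling lemmas already in hand. The candidate population function is $\mu_{1,n}^{*,\mathsf{db}}(\mathbf{x}) := \exp(\theta_{1,n}^{*\top}\mathbf{x}) - a_n^*$, with $a_n^*$ the population bias defined in Lemma \ref{debiased_intercept_consistency}.

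For stability, I would write the difference between the sample and population debiased models as
\[
\hat{\mu}_{1,n}^{\mathsf{db}}(\mathbf{x}) - \mu_{1,n}^{*,\mathsf{db}}(\mathbf{x}) = \bigl[\exp(\hat{\theta}_{1,n}^{\top}\mathbf{x}) - \exp(\theta_{1,n}^{*\top}\mathbf{x})\bigr] - (\hat{a}_n - a_n^*),
\]
and apply the triangle inequality to obtain $\|\hat{\mu}_{1,n}^{\mathsf{db}} - \mu_{1,n}^{*,\mathsf{db}}\|_n \leq \|\exp(\langle\hat{\theta}_{1,n},\cdot\rangle) - \exp(\langle\theta_{1,n}^{*},\cdot\rangle)\|_n + |\hat{a}_n - a_n^*|$. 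The first term is $o_p(1)$ by Lemma \ref{exponentiated_stability} and the second is $o_p(1)$ by Lemma \ref{debiased_intercept_consistency}, which settles this half.

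For typically simple realizations, I would follow the template of Example \ref{ols_is_simple}. First I would record that $\|\theta_{1,n}^*\|$ is bounded uniformly in $n$ (from the proof of Lemma \ref{log_ols_consistency}) and that $a_n^*$ is bounded uniformly in $n$ (from the proof of Lemma \ref{debiased_intercept_consistency}, using the $n$-uniform boundedness of $\exp(\theta_{1,n}^{*\top}\mathbf{x}_i)$ and $\tfrac1n\sum_{i} y_{1i}$). Since $\hat{\theta}_{1,n} \xrightarrow{p} \theta_{1,n}^*$ and $\hat{a}_n \xrightarrow{p} a_n^*$, with probability tending to one the pair $(\hat{a}_n,\hat{\theta}_{1,n})$ lies in the unit ball around $(a_n^*,\theta_{1,n}^*)$, which in turn sits inside a box $\{\|(a,\theta)\|_\infty \leq B\}$ with $B$ independent of $n$; I take $\mathcal{F}_n$ to be the set of functions $\mathbf{x}\mapsto \exp(\theta^\top\mathbf{x}) - a$ over that unit ball, so that $\P(\hat{\mu}_{1,n}^{\mathsf{db}}\in\mathcal{F}_n)\to 1$. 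To control the entropy I would parametrize $\mathcal{F}_n$ by $\phi = (a,\theta) \in \R^{d+1}$ and invoke Lemma \ref{smoothness_of_transformed_outcome} in the special case $\beta_0 = -a$, $\beta_1 = 1$, giving $\|\mu_\phi - \mu_{\phi'}\|_n \leq M\|\phi-\phi'\|$ with $M$ independent of $n$; the $s$-covering number of $\mathcal{F}_n$ is then at most the $(s/M)$-covering number of a bounded Euclidean ball in $\R^{d+1}$, which the volume argument used in Example \ref{ols_is_simple} bounds by $(1 + 2RM/s)^{d+1}$, whence
\[
\int_0^1 \sup_{n\geq 1}\sqrt{\log\mathsf{N}(\mathcal{F}_n,\|\cdot\|_n,s)}\,\d s \leq \int_0^1 \sqrt{(d+1)\log(1+2RM/s)}\,\d s < \infty.
\]

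I do not expect a genuine obstacle: every ingredient has already been established, and the only point requiring care is checking that the function classes $\mathcal{F}_n$ and the Lipschitz constant $M$ can be chosen uniformly in $n$ — which is exactly the purpose of the $n$-uniform bounds on $\theta_{1,n}^*$, $a_n^*$, and $\|\mathbf{x}_i\|$ recorded in Lemmas \ref{log_ols_consistency} and \ref{debiased_intercept_consistency} and in the hypotheses of Theorem \ref{log_transform}. The only mildly fiddly step is the bookkeeping in that uniformity check.
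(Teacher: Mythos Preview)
Your proposal is correct and follows essentially the same approach as the paper: the same population target $\mu_{1,n}^{*,\mathsf{db}}(\mathbf{x}) = \exp(\theta_{1,n}^{*\top}\mathbf{x}) - a_n^*$, the same two-term triangle-inequality bound for stability (invoking Lemmas \ref{exponentiated_stability} and \ref{debiased_intercept_consistency}), and the same construction of $\mathcal{F}_n$ as a unit ball in $(a,\theta)$-space combined with Lemma \ref{smoothness_of_transformed_outcome} and the covering-number argument from Example \ref{ols_is_simple}. Your explicit attention to the $n$-uniformity of the bounds on $\theta_{1,n}^*$ and $a_n^*$ is, if anything, slightly more careful than the paper's own presentation.
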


\begin{proof}
Define the ``population" debiased regression function $\mu_{1,n}^{\mathsf{db}*}$ by:
\begin{align*}
    \mu_{1,n}^{\mathsf{db}*}(\mathbf{x}) := \exp( \theta_{1,n}^{* \top} \mathbf{x}) - \underbrace{\frac{1}{n} \sum_{i = 1}^n [\exp(\theta_{1,n}^{* \top} 
    \mathbf{x}_i) - y_{1i}]}_{= a_n^*}.
\end{align*}
where $\theta_{1,n}^*$ is defined in Lemma \ref{log_ols_consistency}.  The following calculation shows that $|| \hat{\mu}_{1,n}^{\mathsf{db}} - \mu_{1,n}^{\mathsf{db} *} ||_n$ is vanishing in probability, i.e. $\{ \hat{\mu}^{\mathsf{db}}_{1,n} \}_{n \geq 1}$ is a stable sequence.
\begin{align*}
|| \hat{\mu}_{1,n}^{\mathsf{db}} - \mu_{1,n}^{\mathsf{db*}} ||_n &\leq || \exp( \langle \hat{\theta}_{1,n}, \cdot \rangle) - \exp( \langle \theta_{1,n}^*, \cdot \rangle) ||_n + || \hat{a}_n - a_n^* ||_n\\
&= \underbrace{|| \exp( \hat{\theta}_{1,n}, \cdot \rangle) - \exp( \theta_{1,n}^*, \cdot \rangle) ||_n}_{=o_p(1) \text{ by Lemma \ref{exponentiated_stability}}} + \underbrace{| a_n - a_n^* |}_{= o_p(1) \text{ by Lemma \ref{debiased_intercept_consistency}}}
\end{align*}
To check that $\{ \hat{\mu}^{\mathsf{db}}_{1,n} \}_{n \geq 1}$ has typically simple realizations, define $\mathcal{F}_n = \{ \mu_{-a, 1, \theta}(\mathbf{x}) = \exp(\theta^{\top} \mathbf{x}) - a \, : \, |a - a_n^*| \leq 1, || \theta - \theta_{1,n}^* || \leq 1 \}$.  Then $\P_{n_1,n}( \hat{\mu}_{1,n}^{\mathsf{db}} \in \mathcal{F} ) \geq 1 - \P_{n_1,n}(|\hat{a}_n - a_n^*| > 1) - \P_{n_1,n}( || \hat{\theta}_{1,n} - \theta_{1,n}^* || > 1)$.  Since these two probabilities are vanishing by Lemma \ref{exponentiated_stability} and Lemma \ref{debiased_intercept_consistency}, we have $\P_{n_1,n}( \hat{\mu}_{1,n}^{\mathsf{db}} \in \mathcal{F}) \rightarrow 1.$  Since $a_n^*$ and $\theta_{1,n}^*$ are bounded, Lemma \ref{smoothness_of_transformed_outcome} and the argument from Example \ref{ols_is_simple} show that $\mathcal{F}_n$ satisfies the entropy integral condition.
\end{proof}

\begin{proposition}
\textup{\textbf{(Proof for the second-stage OLS estimator)}}\\
The sequence of second-stage OLS estimators $\{ \hat{\mu}_{1,n}^{\mathsf{ols2}} \}_{n \geq 1}$ is stable and has typically simple realizations.
\end{proposition}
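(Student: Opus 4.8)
The plan is to mimic the proof for the debiased estimator almost verbatim, with the single debiased intercept $a_n^*$ replaced by the pair of second-stage coefficients $(\beta_{0,n}^*, \beta_{1,n}^*)$. I would define the ``population'' second-stage regression function
\begin{align*}
\mu_{1,n}^{\mathsf{ols2}*}(\mathbf{x}) := \beta_{0,n}^* + \beta_{1,n}^* \exp( \theta_{1,n}^{* \top} \mathbf{x}),
\end{align*}
where $\theta_{1,n}^*$ is from Lemma \ref{log_ols_consistency} and $(\beta_{0,n}^*, \beta_{1,n}^*)$ is from Lemma \ref{second_stage_ols_consistency}. Writing $\hat{\phi}_n = (\hat{\beta}_{0,n}, \hat{\beta}_{1,n}, \hat{\theta}_{1,n})$ and $\phi_n^* = (\beta_{0,n}^*, \beta_{1,n}^*, \theta_{1,n}^*)$, stability should follow from the Lipschitz bound $\| \hat{\mu}_{1,n}^{\mathsf{ols2}} - \mu_{1,n}^{\mathsf{ols2}*} \|_n \le M \| \hat{\phi}_n - \phi_n^* \|$ supplied by Lemma \ref{smoothness_of_transformed_outcome}, combined with $\| \hat{\phi}_n - \phi_n^* \| \xrightarrow{p} 0$, which is exactly the conjunction of Lemma \ref{log_ols_consistency} and Lemma \ref{second_stage_ols_consistency}.

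The one genuine bookkeeping point, which I expect to be the main obstacle, is that Lemma \ref{smoothness_of_transformed_outcome} only applies when the parameter vectors lie in a fixed $\ell_\infty$-ball, so I would first check that $\phi_n^*$ is uniformly bounded. The bound $\| \theta_{1,n}^* \| \le L / \lambda_{\min}$ was already established in the proof of Lemma \ref{log_ols_consistency}; the explicit formula (\ref{beta1_star}) bounds $\beta_{1,n}^*$ above, since its numerator is controlled by Cauchy--Schwarz using the boundedness of $\tfrac1n \sum_i y_{1i}^2$ and of $\exp(\theta_{1,n}^{* \top} \mathbf{x}_i)$, while its denominator is bounded below by $\exp(-[\tfrac1n \sum_i (\log y_{1i})^2]^{1/2})$ as in the proof of Lemma \ref{second_stage_ols_consistency}; and then (\ref{beta0_star}) shows $\beta_{0,n}^*$ is bounded. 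Hence $\| \phi_n^* \|_\infty \le B_0$ for some $B_0$ not depending on $n$. Since $\hat{\phi}_n \xrightarrow{p} \phi_n^*$, the event $\{ \| \hat{\phi}_n \|_\infty \le B_0 + 1 \}$ has probability tending to one, and on that event Lemma \ref{smoothness_of_transformed_outcome} applies with $B = B_0 + 1$ (so $M$ is uniform in $n$), giving the Lipschitz bound above and hence stability.

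For typically simple realizations I would take
\begin{align*}
\mathcal{F}_n = \{ \mu_{\beta_0, \beta_1, \theta}(\mathbf{x}) = \beta_0 + \beta_1 \exp(\theta^{\top} \mathbf{x}) \; : \; |\beta_0 - \beta_{0,n}^*| \le 1, \; |\beta_1 - \beta_{1,n}^*| \le 1, \; \| \theta - \theta_{1,n}^* \| \le 1 \}.
\end{align*}
By Lemmas \ref{log_ols_consistency} and \ref{second_stage_ols_consistency}, $\P_{n_1,n}( \hat{\mu}_{1,n}^{\mathsf{ols2}} \in \mathcal{F}_n) \to 1$. Because $\phi_n^*$ is uniformly bounded, every element of $\mathcal{F}_n$ has parameters of $\ell_\infty$-norm at most $B_0 + 1$, so Lemma \ref{smoothness_of_transformed_outcome} realizes $\mathcal{F}_n$ as the image of a Euclidean ball in $\R^{d+2}$ under a map that is $M$-Lipschitz from $\| \cdot \|$ to $\| \cdot \|_n$, uniformly in $n$. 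The $s$-covering number of $\mathcal{F}_n$ is then at most the $(s/M)$-covering number of that ball, which is at most $(1 + 2M/s)^{d+2}$ by the volume argument in the footnote to Example \ref{ols_is_simple}; plugging this into the entropy integral (\ref{entropy}) gives a finite bound exactly as in Example \ref{ols_is_simple}. Everything beyond the boundedness step is a direct transcription of the debiased-estimator proof, since the two models differ only in whether the intercept-type correction is obtained by subtracting an in-sample average or by a least-squares fit.
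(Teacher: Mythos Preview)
Your proof is correct and follows essentially the same approach as the paper: define the population function $\mu_{1,n}^{\mathsf{ols2}*}$, establish boundedness of $\phi_n^*$, and verify typically simple realizations via Lemma \ref{smoothness_of_transformed_outcome} and the covering-number argument from Example \ref{ols_is_simple}. The only cosmetic difference is that for stability the paper does an explicit three-term triangle-inequality decomposition (controlling the $\beta_0$, $\beta_1$, and $\theta$ perturbations separately via Lemmas \ref{second_stage_ols_consistency} and \ref{exponentiated_stability}), whereas you invoke the Lipschitz bound of Lemma \ref{smoothness_of_transformed_outcome} directly---this is slightly cleaner, since it reuses the boundedness of $\phi_n^*$ that both proofs need anyway for the entropy step.
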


\begin{proof}
Define the ``population" second-stage OLS regression function $\mu_{1,n}^{\mathsf{ols2}*}$ by:
\begin{align*}
    \mu_{1,n}^{\mathsf{ols2}*}(\mathbf{x}) := \beta_{0,n}^* + \beta_{1,n}^* \exp( \theta_{1,n}^{* \top} \mathbf{x})
\end{align*}
where $\beta_{0,n}^*$ and $\beta_{1,n}^*$ are defined in Lemma \ref{second_stage_ols_consistency}.  Then $|| \hat{\mu}_{1,n}^{\mathsf{ols2}} - \mu_{1,n}^{\mathsf{ols2}*} ||_n \xrightarrow{p} 0$ by first splitting into three terms,
\begin{align*}
|| \hat{\mu}_{1,n}^{\mathsf{ols2}} - \mu_{1,n}^{\mathsf{ols2}*} ||_n &\leq | \hat{\beta}_{0,n} - \beta_{0,n}^* |\\
&+ || \hat{\beta}_{1,n} \exp( \langle \hat{\theta}_{1,n}, \cdot \rangle) - \beta_{1,n}^* \exp( \langle \hat{\theta}_{1,n}, \cdot \rangle) ||_n\\
&+ || \beta_{1,n}^* \exp( \langle \hat{\theta}_{1,n}, \cdot \rangle) - \beta_{1,n}^* \exp( \langle \theta_{1,n}^*, \cdot \rangle) ||_n
\end{align*}
then applying Lemma \ref{second_stage_ols_consistency} and Lemma \ref{exponentiated_stability} to kill off each term in the upper bound.  Lemma \ref{exponentiated_stability} ca be applied on the final summand because $\beta_{1,n}^*$ is bounded.  This can be seen by examining the ratio formula for $\beta_{1,n}^*$ given in the proof of Lemma \ref{log_ols_consistency}.  The proof of that Lemma gives a uniform lower bound of the denominator in that ratio, and the boundedness of the numerator follows because $\tfrac{1}{n} \sum_{i = 1}^n y_{1i}^2$, $|| \mathbf{x}_i ||$, and $\theta_{1,n}^*$ are all bounded.

The ``typically simple realizations" condition is checked in the same way as in the case of the debiased estimator, using Lemma \ref{smoothness_of_transformed_outcome} and the argument from Example \ref{ols_is_stable}. 
\end{proof}

\subsubsection{Proof of Theorem \ref{isotonic_regression}}

\begin{proof}
Prediction unbiasedness is established in Barlow \& Brunk \citep{isotonic}.  First, we will prove that $\{ \hat{\mu}_{1,n} \}_{n \geq 1}$ has ``typically simple realizations."  Without loss of generality, suppose that $[a, b] = [0, 1]$.  By Lemma 9.11 in \cite{kosorok2008introduction}, the class $\mathsf{M}$ of monotone functions taking $\R$ into $[0, 1]$ satisfies the following metric entropy bound
\begin{align}
\label{monotone_metric_entropy}
    \log \mathsf{N} ( \mathsf{M}, || \cdot ||_n, s) \leq \frac{K}{s}
\end{align}
for all $s \in (0, 1)$ and any $n \geq 1$.  In the above display, $K < \infty$ is a universal constant.  Let $\mathcal{F}_n = \mathsf{M}$.  Using (\ref{monotone_metric_entropy}), we may write:
\begin{align*}
\int_0^{1} \sup_{n \geq 1} \sqrt{\log \mathsf{N}( \mathcal{F}_n, || \cdot ||_n, s)} \mathsf{d} s &\leq \int_0^1 \sqrt{K/s} \, \d s < \infty
\end{align*}
Since $\P( \hat{\mu}_{1,n} \in \mathcal{F}_n) = 1$ for all $n$, this shows that $\{ \hat{\mu}_{1,n} \}$ satisfies the ``typically simple realizations" condition.

Next, we need to show that $\{ \hat{\mu}_{1,n} \}$ is stable.  Let $\mu_{1,n}^*$ be the solution to the ``population" isotonic regression problem.  For any $\mu \in \mathsf{M}$, define $\ell_{\mu}( y, x)$ by $\ell_{\mu} (y, x) = (y - \mu(x))^2$.  The distance\footnote{Technically, the distance $|| \cdot ||_n$ on the functions $\ell_{\mu}$ is not the same as the distance $|| \cdot ||_n$ on the functions $\mu$, because the input is one dimension higher.  For notational cleanliness, we avoided introducing a new norm.} between $\ell_{\mu}$ and $\ell_{\nu}$ can be controlled by $|| \mu - \nu ||_n$.  This uses the fact that $(y_{1i} - \mu(x_i))^2$ is between 0 and 1 for any $i$.
\begin{align*}
|| \ell_{\mu} - \ell_{\nu} ||_n^2 &= \frac{1}{n} \sum_{i = 1}^n [(y_{1i} - \mu(x_i))^2 - (y_{1i} - \nu(x_i))^2]^2\\
&\leq \frac{1}{n} \sum_{i = 1}^n | (y_{1i} - \mu(x_i))^2 - (y_{1i} - \nu(x_i))^2|^2\\
&= \frac{1}{n} \sum_{i = 1}^n |y_{1i} - \mu(x_i) + y_{1i} - \nu(x_i)|^2 \cdot |\mu(x_i) - \nu(x_i)|^2\\
&\leq \frac{4}{n} \sum_{i = 1}^n | \mu(x_i) - \nu(x_i)|^2\\
&= 4 || \mu - \nu ||_n^2
\end{align*}
Hence, the $s$-covering number of the set $\mathsf{L} = \{ \ell_{\mu} : \mu \in \mathsf{M} \}$ in the $|| \cdot ||_n$-norm is upper bounded by the $s/2$-covering number of the set $\mathsf{M}$.  Thus, if we define $\mathcal{R}(\mu) = \tfrac{1}{n} \sum_{i = 1}^n (y_{1i} - \mu(x_i))^2$ and $\hat{\mathcal{R}}(\mu) = \tfrac{1}{n_1} \sum_{Z_i = 1} (y_{1i} - \mu(x_i))^2$, then Proposition~\ref{crd_maximal_inequality} implies:
\begin{align*}
\E \left[ \sup_{\mu \in \mathsf{M}} | [ \hat{\mathcal{R}}(\mu) - \hat{\mathcal{R}}(\mu_{1,n}^*)] - [\mathcal{R}(\mu) - \mathcal{R}(\mu_{1,n}^*)]| \right] &= \frac{1}{\sqrt{n}} \E \left[ \sup_{\ell \in \mathsf{L}}  | \mathbb{G}_n(\ell) - \mathbb{G}_n( \ell_{\mu_{1,n}^*})| \right]\\
&\leq \frac{C / p_{\min}}{\sqrt{n}} \int_0^{1} \sqrt{\log \mathsf{N}( \mathsf{M}, || \cdot ||_n, s/2)} \, \mathsf{d} s\\
&\leq \frac{C / p_{\min}}{\sqrt{n}} \int_0^{1} \sqrt{2K / s} \, \mathsf{d} s\\
&\equiv C' / \sqrt{n}
\end{align*}
Hence, for any $\epsilon > 0$, the quantity in the expectation on the LHS of the above display is less than $\epsilon$ with probability tending to one.  On that event, we have:
\begin{align*}
0 \leq \mathcal{R}(\hat{\mu}_{1,n}) - \mathcal{R}(\mu_{1,n}^*) &\leq \hat{\mathcal{R}}( \hat{\mu}_{1,n}) - \mathcal{\hat{R}}( \mu_{1,n}^*) + \epsilon \leq \epsilon
\end{align*}
Since $\epsilon$ is arbitrary, this proves $|\mathcal{R}( \hat{\mu}_{1,n}) - \mathcal{R}(\mu_{1,n}^*)|$ tends to zero in probability.  Finally, to prove that $|| \hat{\mu}_{1,n} - \mu_{1,n}^* ||_n$ is tending to zero, we appeal to the convexity of the set $\mathsf{M}_n := \{ \mu \in \R^n : \mu_1 \leq \mu_2 \leq \cdots \leq \mu_n \}$.  With a slight abuse of notation, let $\mu_{1,n}^* = (\mu_{1,n}^*(x_{(1)}), \cdots, \mu_{1,n}^*(x_{(n)})) \in \mathsf{M}_n$ and let $\hat{\mu}_{1,n} \in \mathsf{M}_n$ be defined similarly.  Then $\mu_{1,n}^*$ is the projection of $\mathbf{y}_1$ onto $\mathsf{M}_n$, so $\langle \mu_{1,n}^* - \mathbf{y}_1, \hat{\mu}_{1,n} - \mu_{1,n}^* \rangle \geq 0$.  This allows us to write:
\begin{align*}
|| \hat{\mu}_{1,n} - \mathbf{y}_1 ||^2 &= || (\hat{\mu}_{1,n} - \mu_{1,n}^*) + (\mu_{1,n}^* - \mathbf{y}_1) ||^2\\
&= || \hat{\mu}_{1,n} - \mu_{1,n}^* ||^2 + || \mu_{1,n}^* - \mathbf{y}_1 ||^2 + 2 \langle \hat{\mu}_{1,n} - \mu_{1,n}^*, \mu_{1,n}^* - \mathbf{y}_1 \rangle\\
&\geq || \hat{\mu}_{1,n} - \mu_{1,n}^* ||^2 + || \mu_{1,n}^* - \mathbf{y}_1 ||^2\\
|| \hat{\mu}_{1,n} - \mu_{1,n}^* ||^2 &\leq || \hat{\mu}_{1,n} - \mathbf{y}_1 ||^2 - || \mu_{1,n}^* - \mathbf{y}_1 ||^2\\
&= \mathcal{R}(\hat{\mu}_{1,n}) - \mathcal{R}(\mu_{1,n}^*)\\
&\xrightarrow{p} 0
\end{align*}
\end{proof}

\subsubsection{Proof of Lemma \ref{growth_lemma}}

\begin{proof}
For all $\theta \in \mathbb{B}_r(\theta_n^*)$, the mean-value theorem allows us to write:
\begin{align*}
    \L_n(\theta) &= \L_n(\theta_n^*) + \nabla \L_n(\theta_n^*) (\theta - \theta_n^*) + \frac{1}{2} (\theta - \theta_n^*)^{\top} \nabla^2 \L_n( \bar{\theta})(\theta - \theta_n^*)
\end{align*}
In the above display, $\bar{\theta} = \lambda \theta + (1 - \lambda) \theta_n^*$ for some $\lambda \in [0, 1]$.  Using the fact that $\nabla \L_n(\theta_n^*) = 0$ and $\Lambda_{\min}(\nabla^2 \L_n(\bar{\theta})) \geq \lambda_{\min}$, we may conclude $\L_n(\theta) - \L_n(\theta_n^*) \geq \tfrac{1}{2} \lambda_{\min} || \theta - \theta_n^* ||^2$.  Therefore, Assumption \ref{growth} is satisfied with the following choice of the function $f$:
\begin{align*}
    f(t) = 
    \left\{
    \begin{array}{ll}
    \tfrac{1}{2} \lambda_{\min} t^2 &t < r\\
    (\tfrac{1}{2} \lambda_{\min} r) t &t \geq r
    \end{array} \right.
\end{align*}
The extension to $t \geq r$ was by convexity.
\end{proof}

\subsubsection{Proof of Theorem \ref{general_parametric_models}}

\begin{proof}
First, we prove a useful uniform convergence result.  Let $\ell_{\theta} : \R^{d + 1} \rightarrow \R$ be defined by $\ell_{\theta}(\mathbf{x}, y) = \ell(\theta, \mathbf{x}, y)$.  For any radius $\kappa \geq 0$, define $\mathcal{F}_n(\kappa) := \{ \ell_{\theta} \, : \, || \theta - \theta_n^* || \leq \kappa \}$.  If we set $\mathbb{G}_n(\theta) = \sqrt{n}[ \hat{\L}_n(\theta) - \L_n(\theta)]$, then Proposition \ref{crd_maximal_inequality} gives an average-case bound for the supremum of $\mathbb{G}_n(\theta)$ near $\theta_n^*$.
\begin{align}
\label{localized_maximal_inequality}
\E \left[ \sup_{|| \theta - \theta_n^* || \leq \kappa} | \mathbb{G}_n(\theta) - \mathbb{G}_n(\theta_n^*)| \right] &\leq (C / p_{\min}) \int_0^{\infty} \sqrt{\log \mathsf{N}(\mathcal{F}_n(\kappa), || \cdot ||_n', s)} \, \d s 
\end{align}
In the above display, $C$ is a universal constant and $|| f - g ||_n' := (\tfrac{1}{n} \sum_{i = 1}^n [f(\mathbf{x}_{i,n}, y_{1i,n}) - g(\mathbf{x}_{i,n}, y_{1i,n})]^2 )^{1/2}$.  If $\kappa$ is smaller than the radius $r$ from assumption \ref{smoothness}, then the $s$-covering number of $(\mathcal{F}_n(\kappa), || \cdot ||_n)$ can be bounded by the $(s/L)$-covering number of the Euclidean ball $\mathbb{B}_{\kappa}(\theta_n^*)$;  this is because any $(s/L)$-covering $\{ \theta_i \}_{i = 1}^N$ of $\mathbb{B}_{\kappa}(\theta_n^*)$ gives an $s$-covering $\{ \ell_{\theta_i} \}_{i = 1}^N$.  It can be shown that the $(s/L)$-covering number of a Euclidean ball of radius $\kappa$ is bounded by $\kappa^d (1 + 2L/s)^d$ (see Chapter 4 of \cite{vershynin_2018}).  Therefore, the upper bound can be further bounded as follows:
\begin{align*}
\int_0^{\infty} \sqrt{\log \mathsf{N}( \mathcal{F}_n(\kappa), || \cdot ||_n', s)} \, \d s &\leq  \int_0^{\infty} \sqrt{\log \mathsf{N}( \mathbb{B}_{\kappa}(\theta_n^*), \ell_2, s/L)} \, \d s\\
&\leq \int_0^{\kappa} \sqrt{d \log \kappa + d \log(1 + 2L/s)} \, \d s\\
&\leq \kappa \sqrt{d \log \kappa} + \sqrt{8 d L \kappa}
\end{align*}
In particular, for any $\kappa$, the upper bound in (\ref{localized_maximal_inequality}) is bounded by a constant $M(\kappa) := (C / p_{\min}) (\kappa \sqrt{d \log \kappa} + \sqrt{8 d L \kappa})$ not depending on $n$.  The conclusion $\sup_{|| \theta - \theta_n^* || \leq \kappa} | [\hat{\L}_n(\theta) - \hat{\L}_n(\theta_n^*)] - [\L_n(\theta) - \L_n(\theta_n^*)]| \rightarrow_p 0$ follows from Markov's inequality.

Now, we are ready to prove $|| \hat{\theta}_n - \theta_n^* || \rightarrow_p 0$.  Let $\epsilon \in (0, r)$ be arbitrary.  For all large $n$, $\L_n(\theta) - \L_n(\theta_n^*) \geq f(\epsilon) > 0$ whenever $|| \theta - \theta_n^* || = \epsilon$.  By the preceding uniform convergence result, that means $\hat{\L}_n(\theta) - \hat{\L}_n(\theta_n^*) \geq f(\epsilon)/2$ simultaneously for all $\theta$ satisfying $|| \theta - \theta_n^* || = \epsilon$ with probability going to one.  By convexity, that event implies $\hat{\theta}_n \in \mathbb{B}_{\epsilon}(\theta_n^*)$.  Since $\epsilon$ is arbitrary, that proves $|| \hat{\theta}_n - \theta_n^* || \rightarrow_p 0$.

If the map $\theta \mapsto \mu_{\theta}$ is also smooth in the sense $|| \mu_{\theta} - \mu_{\phi} ||_n \leq M || \theta - \phi ||$ for all $\theta, \phi \in \mathbb{B}_r(\theta_n^*)$, then stability follows immediately by setting $\mu_n^* = \mu_{\theta_n^*}$.  The ``typically simple realizations" condition can be verified by setting $\mathcal{F}_n = \{ \mu_{\theta} \, : \, || \theta - \theta_n^* || \leq 1 \}$.  This follows by once again using the covering numbers of the ball $\mathbb{B}_1(\theta_n^*)$ to upper bound the covering number of $\mathcal{F}_n$ -- the argument is spelled out in Example \ref{ols_is_simple}.
\end{proof}

\end{document}